\newtheorem{defin}{Definition}
\newtheorem{lemma}{Lemma}
\newtheorem{prop}{Proposition}
\newtheorem{theo}{Theorem}
\newenvironment{proof}{\medskip\par\noindent{\bf Proof}}{\hfill $\Box$
\medskip\par}
\def\C{\mathbb{C}}
\def\N{\mathbb{N}}
\def\R{\mathbb{R}}
\begin{document}
\title{Strongly regular multi-level solutions of singularly perturbed linear partial differential equations}
\author{{\bf A. Lastra, S. Malek, J. Sanz\footnote{The three authors are partially supported by the research project MTM2012-31439 of Ministerio de Ciencia e Innovaci\'on, Spain}}\\
University of Alcal\'{a}, Departamento de F\'{i}sica y Matem\'{a}ticas,\\
Ap. de Correos 20, E-28871 Alcal\'{a} de Henares (Madrid), Spain\\
University of Lille 1, Laboratoire Paul Painlev\'e,\\
59655 Villeneuve d'Ascq cedex, France\\
University of Valladolid, Dpto. de \'Algebra, An\'alisis Matem\'atico, Geometr{\'\i}a y Topolog{\'\i}a,\\
IMUVA, Paseo de Bel\'en 7, Campus Miguel Delibes, 47011 Valladolid, Spain.\\
{\tt alberto.lastra@uah.es}\\
{\tt Stephane.Malek@math.univ-lille1.fr }\\
{\tt jsanzg@am.uva.es}}
\maketitle

\thispagestyle{empty}

\begin{center}
{\bf Abstract}
\end{center}
We study the asymptotic behavior of the solutions related to a family of singularly perturbed partial differential equations in the complex domain. The analytic solutions are asymptotically represented by a formal power series in the perturbation parameter. The geometry of the problem and the nature of the elements involved in it give rise to different asymptotic levels related to the so-called strongly regular sequences. The result leans on a novel version of a multi-level Ramis-Sibuya theorem.


\medskip

\noindent Key words:  Linear partial differential equations, singular perturbations, formal power series, Borel-Laplace transform, Borel summability, Gevrey asymptotic expansions, strongly regular sequence

\noindent 2010 MSC: 35C10, 35C20, 40C10 
\bigskip

\section{Introduction}

In this work, we study a family of linear partial differential equations of the form
\begin{equation}\label{e1}(\epsilon^{r_1}(t^{k+1}\partial_t)^{s_1}+a)\partial_{z}^{S}X(t,z,\epsilon)=\sum_{(s,\kappa_0,\kappa_1)\in\mathcal{S}}b_{s\kappa_0\kappa_1}(z,\epsilon)t^s(\partial_t^{\kappa_0}\partial_z^{\kappa_1}X)(t,z,\epsilon),
\end{equation}
for given initial data $\partial_{z}^{j}X(t,0,\epsilon)=\phi_{j}(t,\epsilon)$, $0\le j\le S-1$.

Here, we assume $a\in\C^{\star}:=\C\setminus\{0\}$ and $\mathcal{S}$ consists of a finite family of $\N_0^3$, where $\N_0$ stands for the set of nonnegative integers $\{0,1,2,\ldots\}$, $S>\kappa_1$ for every $(s,\kappa_0,\kappa_1)\in\mathcal{S}$ and $b_{s\kappa_0\kappa_1}\in\mathcal{O}(D\times\mathcal{E})$, where $D$ and $\mathcal{E}$  are a neighborhood of the origin and the domain of definition of the perturbation parameter, respectively.

The initial data are provided as holomorphic functions defined in a product of two finite sectors in both, the variable $t$ and the perturbation parameter $\epsilon$.

The shape of the problem under study is analogous to that in~\cite{lama}. The main novelty in the present article consists of the nature of the coefficients $b_{s\kappa_0\kappa_1}$ appearing in the equation. In~\cite{lama}, those belong to $\mathcal{O}\{z,\epsilon\}$, i.e. they are holomorphic functions in a product of neighborhoods of the origin in both $z$ and $\epsilon$ variables. Here, if one writes
$$b_{s\kappa_0\kappa_1}(z,\epsilon)=\sum_{\beta\ge0}b_{s\kappa_0\kappa_1\beta}(\epsilon)\frac{z^{\beta}}{\beta!},$$
for every $(s,\kappa_0,\kappa_1)\in\mathcal{S}$, the function $b_{s\kappa_0\kappa_1\beta}(\epsilon)$ turns out to be the sum of a formal power series $\hat{b}_{s\kappa_0\kappa_1\beta}(\epsilon)$.

This study falls into the framework of the asymptotic analysis of singularly perturbed Cauchy problems of the form $L(t,z,\partial_t,\partial_z,\epsilon)[u(t,z,\epsilon)]=0,$ where $L$ is a linear differential operator, for initial conditions $(\partial_z^ju)(t,0,\epsilon)=h_j(t,\epsilon)$, $0\le j\le \nu-1$ belonging to some functional space.

As a consequence of the unfolding of this problem, two different asymptotic phenomena appear while studying the asymptotic behavior of the analytic solution with respect to its formal one. On the one hand, the appearance of an irregular singularity $t^{k+1}\partial_{t}$ perturbed by a power of $\epsilon$ causes a Gevrey-like asymptotics with respect to the perturbation parameter. Several forbidden directions with respect to summability of the fomal solution would appear in this sense. On the other hand, the nature of the coefficients $b_{s\kappa_0\kappa_1\beta}$ is crucial in the appearance of another different asymptotic behavior of the solution. More precisely, we assume this coefficients are uniquely asymptotically represented by formal power series $\hat{b}_{s\kappa_0\kappa_1\beta}$, with remainder estimates given. The asymptotic representation is given in terms of a more general sequence than Gevrey ones, the so called strongly regular sequences (see Section~\ref{subs51} for the details).

The behavior of the solution of certain problems with respect to the  nature of the elements involved in their equations has been widely studied in the literature. More precisely, one can find several works concerning Gevrey regularity of solutions of ODEs in the Gevrey case (see~\cite{cokr},~\cite{djmi2}) and also involving more general Carleman classes of functions (see~\cite{djmi},~\cite{th}). Likewise, some authors have focused some of their studies on the appearance of Gevrey classes of functions involved in the study of solutions of PDEs (see~\cite{ho},~\cite{si},~\cite{chro},~\cite{ta1},~\cite{ta2}). This work aims to take a step forward in order to deal with Cauchy problems in PDEs in which some of the elements belong to certain general ultraholomorphic classes of functions. The phenomenon observed here combines both Gevrey and a more general behavior related to a strongly regular sequence owing to the inherent structure of the PDE and the nature of the coefficients involved, respectively.

The point of depart of our problem is the formal equation
\begin{equation}\label{e2}(\epsilon^{r_1}(t^{k+1}\partial_t)^{s_1}+a)\partial_{z}^{S}\hat{X}(t,z,\epsilon)=\sum_{(s,\kappa_0,\kappa_1)\in\mathcal{S}}\hat{b}_{s\kappa_0\kappa_1}(z,\epsilon)t^s(\partial_t^{\kappa_0}\partial_z^{\kappa_1}\hat{X})(t,z,\epsilon),
\end{equation}
where
$$\hat{b}_{s\kappa_0\kappa_1}(z,\epsilon)=\sum_{\beta\ge0}\hat{b}_{s\kappa_0\kappa_1\beta}\frac{z^{\beta}}{\beta!},$$
for all $(s,\kappa_0,\kappa_1)\in\mathcal{S}$. After finding an analytic solution of (\ref{e1}), we also provide a formal solution to (\ref{e2}) which are linked by a multisummability procedure, regarding both, a Gevrey order, and the summability with respect to a strongly regular sequence $\mathbb{M}$.

There are other works in the literature which deal with solutions of partial differential equations under the influence of a perturbation parameter, exhibiting singularities of different nature. We refer to the works by M. Canalis-Durand, J. Mozo-Fern\'andez and R. Sch{\"a}fke~\cite{cms}, S. Kamimoto~\cite{kami}, the second author\cite{ma1,ma2} and the authors~\cite{lamasa1}.

Regarding strongly regular sequences as those which govern the asymptotic behavior of the solutions of certain equations, one can find some recent works on this direction. This is the case of~\cite{michalik1},~\cite{michalik2} and ~\cite{balseryoshino}. It is worth remarking that the sequence involved in the $1^{+}$ level (see~\cite{im,im2}), related to the solutions of difference equations, turns out to be a strongly regular sequence which falls out from classical Gevrey ones.

From the authors' point of view, there is no additional difficulties in considering an equation concerning two irregular singularities $(t^{k+1}\partial_t$ perturbed by two powers of $\epsilon$), as it was the case in~\cite{lama}. For the sake of simplicity of the calculations and clarity of the results, we have omitted this more complicated approach.

The strategy followed is to reduce the problem to the study of an auxiliary Cauchy problem by means of Borel transform. A fixed point argument in appropriate weighted Banach spaces allow us to provide a formal solution $W(\tau,z,\epsilon)$ to this auxiliary problem as a formal power series in $z$, say
$$W(\tau,z,\epsilon)=\sum_{\beta\ge0}W_{\beta}(\tau,\epsilon)\frac{z^{\beta}}{\beta!},$$
the coefficients of which belong to certain complex Banach space, what entails appropriate bounds in order to take Laplace transform along well chosen directions in $\tau$ variable. The solution of (\ref{e1}) is then given by
$$X(t,z,\epsilon)=\sum_{\beta\ge0}k\int_{L_{\gamma}}W_{\beta}(u,\epsilon)e^{-\left(\frac{u}{t\epsilon^r}\right)^k}\frac{du}{u}\frac{z^\beta}{\beta!},$$
for $t$ in a bounded open set with vertex at 0, $\mathcal{T}$, and $\epsilon$ in the domain of definition for the perturbation parameter, $\mathcal{E}$. Indeed, the previous formal power series converges near the origin in the variable $z$.

Regarding the asymptotic representation of the analytic solution of (\ref{e1}), we study the difference of two such solutions when varying $\mathcal{E}$ among the elements of a good covering in $\C^{\star}$ (see Definition~\ref{defin1}). We write $(X_{i})_{0\le i\le \nu-1}$ for the finite set of solutions obtained when varying $\mathcal{E}$ among the elements of the good covering. Depending on the geometry of the problem, one can encounter two different cases:
\begin{enumerate}
\item[1-] If there are no singularities with arguments in between the integration lines defining Laplace transform of the two solutions, then the difference of two consecutive solutions in the intersection of the domains of the perturbation parameter is asymptotically flat with respect to the asymptotic behavior coming from the coefficients in the equation (see Theorem~\ref{teo445}).
\item[2-] If there a singularity lies in between the two integration paths defining the Laplace transform of the solutions, then the difference of such solutions is flat of some Gevrey order.
\end{enumerate}

The existence of a formal solution of (\ref{e2}) is obtained from a novel version of Ramis-Sibuya theorem involving both, Gevrey asymptotics and $\mathbb{M}$-asymptotics. We conclude the work with the existence of a formal power series $\hat{X}(t,z,\epsilon)$, which is written as a formal power series in $z$ with coefficients in certain Banach space, $\mathbb{E}$. This formal power series is decomposed into the sum of an element in $\mathbb{E}\{\epsilon\}$, say $a(t,z,\epsilon)$, and $\hat{X}_{1},\hat{X}_2\in\mathbb{E}[[\epsilon]]$. For every $0\le i\le \nu-1$, the actual solution is written in the form
$$ X_{i}(t,z,\epsilon)=a(t,z,\epsilon)+X^1_i(t,z,\epsilon)+X^2_i(t,z,\epsilon).$$
The function $\epsilon\mapsto X^1_i(t,z,\epsilon)$ is an $\mathbb{E}-$valued function which admits $\hat{X}^1(t,z,\epsilon)$ as its $s_1/r_1-$Gevrey asymptotic expansion on $\mathcal{E}_{i}$, and $\epsilon\mapsto X^2_i(t,z,\epsilon)$ is an $\mathbb{E}-$valued function which admits $\hat{X}^2(t,z,\epsilon)$ as its $\mathbb{M}-$asymptotic expansion on $\mathcal{E}_i$, $0\le i\le \nu-1$ (see Theorem~\ref{teo726}). This result can be seen as a multisummability result by generalising the characterisation of multisummability given in~\cite{ba}, Theorem 1, page 57, to the framework of strongly regular sequences.

A plan of the work is the following.

In Section~\ref{seccion1} we define a Banach space of functions and describe some of its properties with respect to some operators acting on it. An auxiliary Cauchy problem is studied in Section~\ref{seccion2}. Its formal solution is obtained by means of a fixed point argument in the space of functions described in the previous section. Section~\ref{seccionotra} is devoted to outline the main properties of Laplace transform and asymptotic expansions and also to describe the analytic solutions of the main problem. In the first part of Section~\ref{seccionotra2} we recall the main definitions and properties of strongly regular sequences, asymptotic expansions and summability whilst in the second part, we provide the information for the flatness of the difference of two consecutive solutions in the perturbation parameter. The work concludes in Section~\ref{seccionotramas} with the development of a Ramis-Sibuya-like theorem in two levels, the existence of a formal solution to the main problem, and the asymptotic relationship between the analytic solution and the formal one.

\section{Banach spaces of functions with exponential decay}\label{seccion1}

Let $\rho_0>0$. We write $D(0,\rho_0)\subseteq\C$ for the open disc with center at 0 and radius $\rho_0$. For $d\in\R$, we consider an unbounded sector $S_d:=\{z\in\C:|\arg(z)-d|<\delta_1\}$ for some $\delta_1>0$. We put $\Omega:= S_d\cup D(0,\rho_0)$.

Let $\mathcal{E}$ be an open and bounded sector with vertex at 0.

Throughout this work, $b$ and $\sigma$ are fixed positive real numbers with $b>1$, whilst $k\ge2$ stands for a fixed integer.

\begin{defin}\label{defi74}
Let $\epsilon\in\mathcal{E}$ and $r\in\mathbb{Q}$, $r>0$.

For every $\beta\ge0$, we consider the vector space $F_{\beta,\epsilon,\Omega}$, consisting of the holomorphic functions defined in $\Omega$, $\tau\mapsto h(\tau,\epsilon)$ such that
$$\left\|h(\tau,\epsilon)\right\|_{\beta,\epsilon,\Omega}:=\sup_{\tau\in\Omega}\left\{\frac{1+\left|\frac{\tau}{\epsilon^r}\right|^{2k}}{\left|\frac{\tau}{\epsilon^r}\right|}\exp\left(-\sigma r_b(\beta)\left|\frac{\tau}{\epsilon^r}\right|^{k}\right)|h(\tau,\epsilon)| \right\}<\infty,$$
where $r_b(\beta)=\sum_{n=0}^{\beta}(n+1)^{-b}$. It is straightforward to check that $(F_{\beta,\epsilon,\Omega},\left\|\cdot\right\|_{\beta,\epsilon,\Omega})$ is a Banach space.
\end{defin}

The previous definition is motivated by the corresponding one in~\cite{lama}. There, the domain $\Omega$ did depend on $\epsilon$ whilst in the present work it does not. This dependence was caused by the existence of a movable singularity, not appearing in the current context.

The forthcoming assumption and results are analogous to those in~\cite{lama}, so we omit the details. Assumption (A) guarantees the existence of a positive distance from the elements in $\Omega$ and the singularity coming from the equation. The following results are concerned with the behavior of some operators when acting on the elements in $F_{\beta,\epsilon,\Omega}$.

\textbf{Assumption (A):} Let $a\in\C$ with $a\neq0$, and let $s_1$ be a positive integer. We assume that $ks_1\arg(\tau)\neq \pi(2j+1)+\arg(a)$ for $j=0,\ldots,ks_1-1$ and every $\tau\in\overline{S_d}\setminus\{0\}$. In addition to this, we take $\rho_0$ under the condition $\rho_0<\frac{|a|^{1/ks_1}}{2k^{1/k}}$.

\begin{lemma}\label{lema1}(Lemma 1,~\cite{lama})
Under Assumption (A), there exists a constant $C_1>0$ (which only depends on $k$, $s_1$, $a$) such that
$$\left|\frac{1}{(k\tau^k)^{s_1}+a}\right|\le C_1,$$
for every $\tau\in\Omega$.
\end{lemma}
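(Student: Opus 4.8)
The claimed inequality is equivalent to bounding the denominator away from zero: it suffices to produce a constant $c>0$ with $|(k\tau^k)^{s_1}+a|\ge c$ for all $\tau\in\Omega$, and then take $C_1=1/c$. The plan is to split $\Omega$ into its two pieces $D(0,\rho_0)$ and $S_d$ and treat them by different parts of Assumption (A): the size restriction on $\rho_0$ governs the disc, whereas the angular restriction on $\arg(\tau)$ governs the sector.

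First I would dispose of the disc $D(0,\rho_0)$, where $|\tau|$ is small but $\arg(\tau)$ is unconstrained, so the angular argument is unavailable. Here I estimate $|(k\tau^k)^{s_1}|=k^{s_1}|\tau|^{ks_1}<k^{s_1}\rho_0^{ks_1}$ and insert the bound $\rho_0<|a|^{1/ks_1}/(2k^{1/k})$ from Assumption (A). A direct computation gives $k^{s_1}\rho_0^{ks_1}<|a|/2^{ks_1}\le|a|/2$ (using $k\ge2$), so by the reverse triangle inequality $|(k\tau^k)^{s_1}+a|\ge|a|-|a|/2=|a|/2$ on the whole disc.

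The heart of the matter is the sector, and in fact its closure $\overline{S_d}$. Writing $\tau=|\tau|e^{i\phi}$ and setting $A:=k^{s_1}|\tau|^{ks_1}\ge0$, $B:=|a|>0$, $\psi:=ks_1\phi-\arg(a)$, I would factor out $e^{i\arg(a)}$ to reduce the problem to
\[
|(k\tau^k)^{s_1}+a|^2=|Ae^{i\psi}+B|^2=A^2+2AB\cos\psi+B^2.
\]
The angular clause of Assumption (A) is precisely the statement that $\psi$ avoids the odd multiples of $\pi$, i.e. $\cos\psi\neq-1$, for every $\tau\in\overline{S_d}\setminus\{0\}$. Since $\phi$ runs over the compact interval $[d-\delta_1,d+\delta_1]$, the quantity $\psi$ runs over a compact interval, so its minimum of $\cos$ is attained and equals some $m\in(-1,1)$; hence $\cos\psi\ge m$ throughout. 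Minimising the quadratic $A\mapsto A^2+2mBA+B^2$ over $A\in\R$ (minimum attained at $A=-mB$) yields the value $B^2(1-m^2)>0$, a valid lower bound for all $A\ge0$, so that $|(k\tau^k)^{s_1}+a|^2\ge|a|^2(1-m^2)$ on all of $\overline{S_d}$.

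Combining the two regions gives the uniform bound $|(k\tau^k)^{s_1}+a|\ge\min\{|a|/2,\,|a|\sqrt{1-m^2}\}>0$ on $\Omega$, and $C_1$ is the reciprocal of this quantity, depending only on $k$, $s_1$, $a$ (and the fixed aperture of $S_d$). The one delicate point is the extraction of $m<1$: this is exactly where the finiteness of the sector's aperture is indispensable, since it forces $\psi$ to sweep a compact set and therefore to stay \emph{uniformly} away from the forbidden directions, rather than merely avoiding them at each individual point.
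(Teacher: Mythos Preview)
Your proof is correct. The paper does not supply its own argument for this lemma: it is quoted verbatim from~\cite{lama} and stated without proof here, so there is nothing to compare against beyond confirming that your approach is the natural one. Splitting $\Omega$ into the disc (handled by the size condition on $\rho_0$) and the closed sector (handled by a compactness argument on the angle) is exactly how one expects the two clauses of Assumption~(A) to be used. Your remark that the constant also depends on the geometry of $S_d$ (through $d$ and $\delta_1$, which determine the quantity $m$) is well taken; the paper's phrasing ``only depends on $k$, $s_1$, $a$'' tacitly treats the sector as fixed once Assumption~(A) is in force.
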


\begin{lemma}\label{lema2}(Lemma 3,~\cite{lama})
Let $\epsilon\in\mathcal{E}$ and $\beta$ be a nonnegative integer. Given any bounded and holomorphic function $g(\tau)$ on $\Omega$, then
$$\left\|g(\tau)h(\tau,\epsilon)\right\|_{\beta,\epsilon,\Omega}\le M_{g}\left\|h(\tau,\epsilon)\right\|_{\beta,\epsilon,\Omega},$$
for every $h\in F_{\beta,\epsilon,\Omega}$. Here, the constant $M_{g}$ is defined by $\sup_{\tau\in\Omega}|g(\tau)|$.
\end{lemma}

\begin{prop}\label{prop1}(Proposition 1,~\cite{lama})
Let $\epsilon\in\mathcal{E}$ and $r\in\mathbb{Q}$, $r>0$. We consider real numbers $\nu\ge0$ and $\xi\ge -1$. We also fix nonnegative integers $S,\alpha,\beta$ with $S\ge1$ and $\alpha<\beta$. Then, there exists a constant $C_2>0$ (depending on $\alpha$, $S$, $\beta$, $\xi$, $\nu$ and which does not depend on $\epsilon$) with
$$\left\|\tau^k\int_0^{\tau^k}(\tau^k-s)^{\nu}s^{\xi}f(s^{1/k},\epsilon)ds\right\|_{\beta,\epsilon,\Omega}\le C_2|\epsilon|^{rk(2+\nu+\xi)}\left(\frac{(\beta+1)^b}{\beta-\alpha}\right)^{\nu+\xi+3}\left\|f(\tau,\epsilon)\right\|_{\alpha,\epsilon,\Omega},$$
for every $f\in F_{\alpha,\epsilon,\Omega}$.
\end{prop}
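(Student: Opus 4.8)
The plan is to reduce the operator estimate to a one–variable supremum in the rescaled radial variable $w:=|\tau/\epsilon^{r}|^{k}$, and then to exploit the monotonicity of $r_b$. First I would invert the definition of the norm on $F_{\alpha,\epsilon,\Omega}$ to obtain, for $s^{1/k}\in\Omega$, the pointwise bound
$$|f(s^{1/k},\epsilon)|\le \frac{|s^{1/k}/\epsilon^r|}{1+|s^{1/k}/\epsilon^r|^{2k}}\exp\!\Big(\sigma r_b(\alpha)\big|\tfrac{s^{1/k}}{\epsilon^r}\big|^{k}\Big)\,\|f\|_{\alpha,\epsilon,\Omega}.$$
Since $\Omega=S_d\cup D(0,\rho_0)$ is stable under multiplication by factors in $(0,1]$, parametrizing the segment of integration by $s=\theta\tau^{k}$, $\theta\in[0,1]$, keeps $s^{1/k}=\theta^{1/k}\tau$ inside $\Omega$, so the bound applies along the whole path. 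Substituting it, extracting $|\tau|^{k(2+\nu+\xi)}=|\epsilon|^{rk(2+\nu+\xi)}\,w^{2+\nu+\xi}$, and checking that the factors $|\tau/\epsilon^r|^{\pm1}$ cancel, I recover the announced power of $|\epsilon|$ and reduce the claim to a uniform bound for
$$J:=\sup_{w>0}(1+w^{2})\,w^{2+\nu+\xi}\,e^{-\sigma r_b(\beta)w}\int_{0}^{1}(1-\theta)^{\nu}\theta^{\xi+1/k}\,\frac{e^{\sigma r_b(\alpha)\theta w}}{1+\theta^{2}w^{2}}\,d\theta .$$

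The second step records where the factor $\big(\tfrac{(\beta+1)^{b}}{\beta-\alpha}\big)^{\nu+\xi+3}$ originates. With the change of variable $u=\theta w$ and the fact that $r_b$ is nondecreasing (so $r_b(\alpha)\le r_b(\beta)$), the exponentials combine as $e^{-\sigma r_b(\beta)w+\sigma r_b(\alpha)u}=e^{-\sigma\delta w}\,e^{-\sigma r_b(\alpha)(w-u)}$, where $\delta:=r_b(\beta)-r_b(\alpha)=\sum_{n=\alpha+1}^{\beta}(n+1)^{-b}$. The key elementary inequality is that each of these $\beta-\alpha$ summands is at least $(\beta+1)^{-b}$, so $\delta\ge(\beta-\alpha)(\beta+1)^{-b}$, whence $1/\delta\le(\beta+1)^{b}/(\beta-\alpha)$; as $b>1$ this quantity exceeds $1$. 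Consequently any bound of the form $J\le C\,\delta^{-q}$ with $0<q\le\nu+\xi+3$ at once yields the stated inequality, the numerical constant being absorbed into $C_{2}$, which thereby depends only on $\alpha,S,\beta,\xi,\nu$ and not on $\epsilon$.

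The remaining and principal step is the uniform estimate of $J$, a Watson/Laplace–type one–variable computation. After $u=\theta w$ the inner integral equals $w^{-(\xi+1+1/k)}\int_{0}^{w}(1-u/w)^{\nu}u^{\xi+1/k}(1+u^{2})^{-1}e^{-\sigma r_b(\alpha)(w-u)}\,du$, so I must control $(1+w^{2})\,w^{1+\nu-1/k}\,e^{-\sigma\delta w}$ times this integral. I would split the range at $u=w/2$: on $[0,w/2]$ the weight $u^{\xi+1/k}$ is integrable near $0$ (indeed $\xi+1/k>-1$ since $\xi\ge-1$ and $k\ge2$) and the exponential provides a harmless factor $e^{-\sigma r_b(\alpha)w/2}$, while on $[w/2,w]$ the factor $(1+u^{2})^{-1}$ tames the outer $1+w^{2}$ and $\int_{w/2}^{w}e^{-\sigma r_b(\alpha)(w-u)}\,du\le 1/(\sigma r_b(\alpha))$ with $r_b(\alpha)\ge r_b(0)=1$. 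Collecting the surviving polynomial powers of $w$ and using $\sup_{w>0}w^{m}e^{-\sigma\delta w}=(m/(\sigma e\,\delta))^{m}$ extracts the power of $1/\delta$, the factor $(1-u/w)^{\nu}\le1$ serving only to keep the Beta–type integral $\int_0^1(1-\theta)^\nu\theta^{\xi+1/k}\,d\theta$ finite and to temper the competition with $e^{-\sigma r_b(\alpha)(w-u)}$ near the upper endpoint. I expect this to be the main obstacle: one must track the exponents so that the resulting power of $1/\delta$ never exceeds $\nu+\xi+3$, uniformly over all admissible $\nu,\xi$ and over all pairs $\alpha<\beta$, while keeping every constant independent of $\epsilon$ and of $\beta$. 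Since the only structural change with respect to~\cite{lama} is that $\Omega$ no longer depends on $\epsilon$ — a simplification rather than a complication for these bounds — this is precisely the computation of Proposition~1 in~\cite{lama}, so in practice I would carry out the splitting once and invoke it.
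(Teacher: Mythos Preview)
The paper does not give a proof of this proposition: it is stated with the tag ``(Proposition~1,~\cite{lama})'' and no argument is reproduced here, so there is no local proof to compare against. Your outline is the standard route one takes for such weighted-norm convolution estimates and is consistent with how the result is used downstream (notably in the proofs of Proposition~\ref{prop150} and Proposition~\ref{prop200}): invert the norm to get a pointwise bound on $f(s^{1/k},\epsilon)$, parametrize $s=\theta\tau^{k}$ so that $s^{1/k}=\theta^{1/k}\tau$ stays in $\Omega$, extract the factor $|\epsilon|^{rk(2+\nu+\xi)}$ from $|\tau|^{k(2+\nu+\xi)}$, and reduce everything to a one-variable supremum in $w=|\tau/\epsilon^{r}|^{k}$.

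Your identification of the mechanism behind the factor $\big((\beta+1)^{b}/(\beta-\alpha)\big)^{\nu+\xi+3}$ is exactly right: with $\delta=r_b(\beta)-r_b(\alpha)=\sum_{n=\alpha+1}^{\beta}(n+1)^{-b}\ge(\beta-\alpha)(\beta+1)^{-b}$, the Laplace-type bound $\sup_{w>0}w^{m}e^{-\sigma\delta w}=(m/(\sigma e\,\delta))^{m}$ produces the required power of $1/\delta$. One small caution on bookkeeping: although the statement lists $\alpha,\beta$ among the parameters on which $C_2$ may depend, the way the proposition is applied later (with $\beta$ ranging over all nonnegative integers and the resulting bound fed into a majorant ODE) requires $C_2$ to be uniform in $\alpha$ and $\beta$; you already flag this, and your splitting of the $u$-integral at $u=w/2$ together with $r_b(\alpha)\ge r_b(0)=1$ is precisely what keeps the constants from picking up hidden $\alpha,\beta$ dependence. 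So your plan is correct and complete in spirit; the only thing left is the routine execution of the split estimate, which is indeed the content of the cited Proposition~1 in~\cite{lama}.
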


\section{An auxiliary Cauchy problem}\label{seccion2}

Let $S_{d}$ and $\Omega$ be constructed as in Section~\ref{seccion1}. We also preserve the definition of $\mathcal{E}$ and the choice of the constants $a$, $s_1$.

Let $S$, $r_1$ be positive integers, and put
\begin{equation}\label{e111}
r:=\frac{r_1}{s_1k}.
\end{equation}
$\mathcal{S}$ stands for a finite subset of $\N_0^3$ under the next

\textbf{Assumption (B):} For every $(s,\kappa_0,\kappa_1)\in\mathcal{S}$ we have $\kappa_1<S$. Moreover, there exists a nonnegative integer $\delta_{\kappa_0}\ge k$ such that
$$s=\kappa_0(k+1)+\delta_{\kappa_0}.$$

For every $(s,\kappa_0,\kappa_1)\in\mathcal{S}$, $b_{s\kappa_0\kappa_1}(z,\epsilon)$ is a holomorphic and bounded function defined in $D\times \mathcal{E}$, for some neighborhood of the origin, $D$. We put
\begin{equation*}
b_{s\kappa_0\kappa_1}(z,\epsilon)=\sum_{\beta\ge0}b_{s\kappa_0\kappa_1\beta}(\epsilon)\frac{z^{\beta}}{\beta!},
\end{equation*}
with $b_{s\kappa_0\kappa_1\beta}\in\mathcal{O}(\mathcal{E})$ for every $\kappa_0,\kappa_1$ such that $(s,\kappa_0,\kappa_1)\in\mathcal{S}$ and all $\beta\ge0$.

We also fix constants $A_{\kappa_0,p}\in\C$ for every $(s,\kappa_0,\kappa_1)\in\mathcal{S}$ and $1\le p\le \kappa_0$; their meaning will be made clear in the proof of Theorem \ref{teo280}.

For every $\epsilon\in\mathcal{E}$ we study the Cauchy problem
\begin{equation}\label{e127} ((k\tau^{k})^{s_1}+a)\partial_z^SW(\tau,z,\epsilon)\phantom{mmmmmmmmmmmmmmmmmmmmmmmmmmm}
\end{equation}
$$=\sum_{(s,\kappa_0,\kappa_1)\in\mathcal{S}}b_{s\kappa_0\kappa_1}(z,\epsilon)\epsilon^{-r(s-\kappa_0)} \left[\frac{\tau^{k}}{\Gamma\left(\frac{\delta_{\kappa_0}}{k}\right)} \int_{0}^{\tau^k}(\tau^k-\sigma)^{\frac{\delta_{\kappa_0}}{k}-1}(k\sigma)^{\kappa_0} \partial_z^{\kappa_1}W(\sigma^{1/k},z,\epsilon)\frac{d\sigma}{\sigma} \right.$$
$$ \left. +\sum_{1\le p\le \kappa_0-1}A_{\kappa_0,p}\frac{\tau^k}{\Gamma\left(\frac{\delta_{\kappa_0}+k(\kappa_0-p)}{k} \right)}\int_{0}^{\tau^k}(\tau^k-\sigma)^{\frac{\delta_{\kappa_0}+k(\kappa_0-p)}{k}-1}(k\sigma)^{p} \partial_{z}^{\kappa_1}W(\sigma^{1/k},z,\epsilon)\frac{d\sigma}{\sigma}\right],$$
for initial data
\begin{equation}\label{e128}
(\partial_z^j)W(\tau,0,\epsilon)=W_j(\tau,\epsilon)\in F_{j,\epsilon,\Omega},\quad 0\le j\le S-1.
\end{equation}

We assume that
\begin{equation}\label{e322}
\sup_{\epsilon\in\mathcal{E}}\left\|W_j(\tau,\epsilon)\right\|_{j,\epsilon,\Omega}<\infty, \quad 0\le j\le S-1.
\end{equation}

\begin{prop}\label{prop150}
Under Assumptions (A), (B) and (\ref{e322}), there exists a formal power series solution of (\ref{e127}), (\ref{e128}),
\begin{equation}\label{e139}
W(\tau,z,\epsilon)=\sum_{\beta\ge0}W_{\beta}(\tau,\epsilon)\frac{z^{\beta}}{\beta!},
\end{equation}
where $W_{\beta}(\tau,\epsilon)\in F_{\beta,\epsilon,\Omega}$ for every $\epsilon\in\mathcal{E}$, $\beta\ge0$.
Moreover, there exist  $Z_0,Z_1>0$ such that
\begin{equation}\label{e194}
\left\|W_{\beta}(\tau,\epsilon)\right\|_{\beta,\epsilon,\Omega}\le Z_1Z_0^{\beta}\beta!
\end{equation}
for every $\beta\ge0$ and every $\epsilon\in\mathcal{E}$.
\end{prop}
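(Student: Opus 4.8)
The plan is to determine the coefficients $W_\beta$ recursively by inserting the ansatz (\ref{e139}) into the Cauchy problem (\ref{e127})--(\ref{e128}) and identifying equal powers of $z$, and then to establish both the membership $W_\beta\in F_{\beta,\epsilon,\Omega}$ and the estimate (\ref{e194}) by induction on $\beta$. First I would expand each product $b_{s\kappa_0\kappa_1}(z,\epsilon)\partial_z^{\kappa_1}W$ as a power series in $z$: writing $b_{s\kappa_0\kappa_1}=\sum_\beta b_{s\kappa_0\kappa_1\beta}(\epsilon)z^\beta/\beta!$ and using that the coefficient of $z^\gamma/\gamma!$ in $\partial_z^{\kappa_1}W$ is $W_{\gamma+\kappa_1}$, the Leibniz rule shows that the coefficient of $z^\beta/\beta!$ in the product is $\sum_{\gamma=0}^\beta\binom{\beta}{\gamma}b_{s\kappa_0\kappa_1,\beta-\gamma}(\epsilon)W_{\gamma+\kappa_1}$, while on the left-hand side the coefficient of $z^\beta/\beta!$ is $((k\tau^k)^{s_1}+a)W_{\beta+S}$.

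Since by Lemma \ref{lema1} the function $1/((k\tau^k)^{s_1}+a)$ is bounded and holomorphic on $\Omega$, this factor is invertible and, by Lemma \ref{lema2}, division by it is a bounded operation on each $F_{\beta,\epsilon,\Omega}$. This yields an explicit recurrence expressing $W_{\beta+S}$ as a finite combination of the integral operators of (\ref{e127}) applied to the functions $W_{\gamma+\kappa_1}$, $0\le\gamma\le\beta$. As $\kappa_1<S$ by Assumption (B), every index $\gamma+\kappa_1$ is strictly below $\beta+S$, so the recurrence is well founded and, together with the initial data (\ref{e128}) and (\ref{e322}), determines all $W_\beta$ uniquely. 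The membership $W_\beta\in F_{\beta,\epsilon,\Omega}$ then follows inductively from Proposition \ref{prop1}, which maps $F_{\gamma+\kappa_1,\epsilon,\Omega}$ into $F_{\beta+S,\epsilon,\Omega}$ (the inequality $\gamma+\kappa_1<\beta+S$ being exactly the hypothesis $\alpha<\beta$ there), combined with Lemmas \ref{lema1}, \ref{lema2} and the boundedness of the $b_{s\kappa_0\kappa_1\beta}$.

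For the quantitative bound I would set $w_\beta:=\sup_{\epsilon\in\mathcal E}\|W_\beta\|_{\beta,\epsilon,\Omega}$ and argue with the ansatz $w_\beta\le Z_1Z_0^\beta\beta!$, the base case absorbing the initial data through (\ref{e322}). In the inductive step I apply Proposition \ref{prop1} to each integral operator, for the principal term with $\nu=\delta_{\kappa_0}/k-1$, $\xi=\kappa_0-1$, hence $2+\nu+\xi=\delta_{\kappa_0}/k+\kappa_0$ and $\nu+\xi+3=\delta_{\kappa_0}/k+\kappa_0+1$ (the terms in the $p$-sum, with their constants $A_{\kappa_0,p}$, give the same two values). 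The decisive identity is that Assumption (B), $s=\kappa_0(k+1)+\delta_{\kappa_0}$, forces $rk(2+\nu+\xi)=r\delta_{\kappa_0}+rk\kappa_0=r(s-\kappa_0)$, so the gain $|\epsilon|^{rk(2+\nu+\xi)}$ of Proposition \ref{prop1} cancels \emph{exactly} the prefactor $\epsilon^{-r(s-\kappa_0)}$ of (\ref{e127}); this is what makes all estimates uniform in $\epsilon\in\mathcal E$. Bounding the Taylor coefficients of the bounded holomorphic $b_{s\kappa_0\kappa_1}$ by Cauchy's inequalities, $|b_{s\kappa_0\kappa_1,\beta-\gamma}(\epsilon)|\le M(\beta-\gamma)!/\rho_b^{\beta-\gamma}$ on a disc $D(0,\rho_b)\subset D$, and inserting the inductive hypothesis, the binomial coefficients collapse via $\binom{\beta}{\gamma}(\beta-\gamma)!(\gamma+\kappa_1)!=\beta!\,(\gamma+\kappa_1)!/\gamma!$.

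The main obstacle is the ensuing combinatorial estimate: one must show that
$$\frac{\beta!}{(\beta+S)!}\sum_{\gamma=0}^\beta\frac{(\gamma+\kappa_1)!}{\gamma!}\left(\frac{(\beta+S+1)^b}{\beta+S-\gamma-\kappa_1}\right)^{\delta_{\kappa_0}/k+\kappa_0+1}\frac{Z_0^\gamma}{\rho_b^{\beta-\gamma}}$$
is dominated by $Z_0^{\beta+S}$ up to a constant independent of $\beta$ and $\epsilon$. Here the denominator $\beta+S-\gamma-\kappa_1\ge1$ (again by $\kappa_1<S$) keeps every summand finite; the factor $1/(\rho_bZ_0)^{\beta-\gamma}$ sums to a convergent geometric series once $Z_0$ is taken with $\rho_bZ_0>1$; and the factorial quotient $\beta!/(\beta+S)!\sim\beta^{-S}$, together with $(\gamma+\kappa_1)!/\gamma!$ bounded by a power of $\beta$ of degree $\kappa_1<S$, must be balanced against the polynomial factor $(\beta+S+1)^{b(\delta_{\kappa_0}/k+\kappa_0+1)}$ arising from the transfer through the weighted norms. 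I would carry out this balancing uniformly over the finite set $\mathcal S$, absorbing the constants $C_1,C_2,M,A_{\kappa_0,p}$ and the $p$-sums into a single constant, and then fix $Z_0$ large (and afterwards $Z_1$ large enough to cover the initial terms) so that the whole bound is at most $Z_1Z_0^{\beta+S}(\beta+S)!$, closing the induction.
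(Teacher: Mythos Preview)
Your setup is correct: the recursion, the use of Lemmas~\ref{lema1}--\ref{lema2} and Proposition~\ref{prop1}, and especially the observation that Assumption~(B) forces the exact cancellation $rk(2+\nu+\xi)=r(s-\kappa_0)$, giving uniformity in~$\epsilon$. The gap is in the quantitative step.

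Your direct induction with the ansatz $w_\beta\le Z_1Z_0^\beta\beta!$ does \emph{not} close in general. After inserting Proposition~\ref{prop1} and the inductive hypothesis, the dominant contribution in your displayed sum comes from $\gamma=\beta$, where $(\rho_bZ_0)^{-(\beta-\gamma)}=1$ and the denominator $\beta+S-\gamma-\kappa_1=S-\kappa_1$ is just a constant. That term alone is of size roughly
\[
\frac{\beta!}{(\beta+S)!}\cdot\frac{(\gamma+\kappa_1)!}{\gamma!}\cdot(\beta+S+1)^{b(\delta_{\kappa_0}/k+\kappa_0+1)}
\;\sim\;\beta^{\,b(\delta_{\kappa_0}/k+\kappa_0+1)\,-\,(S-\kappa_1)},
\]
and nothing in Assumptions~(A)--(B) bounds $b(\delta_{\kappa_0}/k+\kappa_0+1)$ in terms of $S-\kappa_1$. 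When this exponent is positive the left side tends to infinity with $\beta$, while the right side $Z_0^{S-\kappa_1}$ is a fixed constant once $Z_0$ is chosen; enlarging $Z_0$ cannot compensate a factor that grows in~$\beta$. So the ``balancing'' you allude to fails precisely here.

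The paper does not attempt a direct induction. Instead it majorizes the weight--transfer factor from Proposition~\ref{prop1} by $C_4\,\beta!/(\beta-\eta_{\kappa_0})!$ with $\eta_{\kappa_0}=\lfloor b(\delta_{\kappa_0}/k+\kappa_0+1)\rfloor-1$, recognizes this as the coefficient arising from the operator $x^{\eta_{\kappa_0}}\partial_x^{\eta_{\kappa_0}}$, and builds an auxiliary linear Cauchy problem (an ordinary differential equation in~$x$ with analytic coefficients) whose formal solution $\sum u_\beta(\epsilon)x^\beta/\beta!$ satisfies a recursion that termwise dominates $w_\beta(\epsilon)$. The bound $u_\beta\le Z_1Z_0^\beta\beta!$ then comes \emph{a posteriori} from the convergence of this majorant series, supplied by the classical existence theory for analytic ODEs, rather than from an inductive verification. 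This majorant--equation device is exactly what absorbs the polynomial loss you could not balance by hand.
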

\begin{proof}
Let $\epsilon\in\mathcal{E}$, $\tau\in\Omega$ and $\beta\ge0$. Taking the formal power series (\ref{e139}) into (\ref{e127}) one gets that a formal solution of the problem satisfies the recursion formula
\begin{equation}\label{e145}
\frac{W_{\beta+S}(\tau,\epsilon)}{\beta!}=\frac{1}{(k\tau^k)^{s_1}+a} \sum_{(s,\kappa_0,\kappa_1)\in\mathcal{S}}\sum_{\alpha_0+\alpha_1=\beta} \frac{b_{s\kappa_0\kappa_1\alpha_0}(\epsilon)}{\alpha_0!}\epsilon^{-r(s-\kappa_0)}\times
\end{equation}
$$\times\left[\frac{\tau^{k}}{\Gamma\left(\frac{\delta_{\kappa_0}}{k}\right)} \int_{0}^{\tau^k}(\tau^k-\sigma)^{\frac{\delta_{\kappa_0}}{k}-1}(k\sigma)^{\kappa_0} \frac{W_{\alpha_1+\kappa_1}(\sigma^{1/k},\epsilon)}{\alpha_1!}\frac{d\sigma}{\sigma}\right.$$
$$\left.+\sum_{1\le p\le \kappa_0-1}A_{\kappa_0,p} \frac{\tau^{k}}{\Gamma\left(\frac{\delta_{\kappa_0}+k(\kappa_0-p)}{k}\right)} \int_{0}^{\tau^{k}}(\tau^k-\sigma)^{\frac{\delta_{\kappa_0}+k(\kappa_0-p)}{k}-1}(k\sigma)^p \frac{W_{\alpha_1+\kappa_1}(\sigma^{1/k},\epsilon)}{\alpha_1!}\frac{d\sigma}{\sigma}  \right].$$
Taking into account Lemma~\ref{lema1}, the function $\tau\mapsto W_{\beta+S}(\tau,\epsilon)$ is well defined and holomorphic in $\Omega$. We now prove it turns out to be an element in $F_{\beta,\epsilon,\Omega}$.

If $0\le \beta\le S-1$, the statement is true from the choice of the initial conditions in (\ref{e128}). We assume $\beta\ge S$ and put $w_{\beta}(\epsilon):=\left\|W_\beta(\tau,\epsilon)\right\|_{\beta,\epsilon,\Omega}$. By taking norms $\left\|\cdot\right\|_{\beta+S,\epsilon,\Omega}$ at both sides of (\ref{e145}) one obtains the inequality
$$\frac{w_{\beta+S}(\epsilon)}{\beta!}\le \frac{1}{|(k\tau^k)^{s_1}+a|}\sum_{(s,\kappa_0,\kappa_1)\in\mathcal{S}}\sum_{\alpha_0+\alpha_1=\beta}\frac{|b_{s\kappa_0\kappa_1\alpha_0}(\epsilon)|}{\alpha_0!}|\epsilon|^{-r(s-\kappa_0)}\times$$
$$\times\left[\left\|  \frac{\tau^k}{\Gamma\left(\frac{\delta_{\kappa_0}}{k}\right)} \int_{0}^{\tau^{k}}(\tau^k-\sigma)^{\frac{\delta_{\kappa_0}}{k}-1}(k\sigma)^{\kappa_0} \frac{W_{\alpha_1+\kappa_1}(\sigma^{1/k},\epsilon)}{\alpha_1!}\frac{d\sigma}{\sigma}      \right\|_{\beta+S,\epsilon,\Omega}\right.$$
$$\left.+\sum_{1\le p\le \kappa_0-1}|A_{\kappa_0,p}| \left\|\frac{\tau^k}{\Gamma\left(\frac{\delta_{\kappa_0}+k(\kappa_0-p)}{k}\right)} \int_{0}^{\tau^k}(\tau^k-\sigma)^{\frac{\delta_{\kappa_0}+k(\kappa_0-p)}{k}-1}(k\sigma)^{p} \frac{W_{\alpha_1+\kappa_1}(\sigma^{1/k},\epsilon)}{\alpha_1!}\frac{d\sigma}{\sigma} \right\|_{\beta+S,\epsilon,\Omega}\right].$$
Regarding Lemma~\ref{lema1} and Proposition~\ref{prop1}, one can write
\begin{equation*}
\frac{w_{\beta+S}(\epsilon)}{\beta!}\le C_1\sum_{(s,\kappa_0,\kappa_1)\in \mathcal{S}}\sum_{\alpha_0+\alpha_1=\beta} \frac{|b_{s\kappa_0\kappa_1\alpha_0}(\epsilon)|}{\alpha_0!}|\epsilon|^{-r(s-\kappa_0)} |\epsilon|^{rk(\frac{\delta_{\kappa_0}}{k}+\kappa_0)} \left(\frac{(\beta+S+1)^b}{\beta+S-\alpha_1-\kappa_1}\right)^{\frac{\delta_{\kappa_0}}{k}+\kappa_0+1}
\end{equation*}
$$\times\left[\frac{C_2k^{\kappa_0}}{\Gamma\left(\frac{\delta_{\kappa_0}}{k}\right)}+\sum_{1\le p\le \kappa_0-1}|A_{\kappa_0,p}|\frac{C_2k^p}{\Gamma\left(\frac{\delta_{\kappa_0}+k(\kappa_0-p)}{k}\right)}\right]\frac{w_{\alpha_1+\kappa_1}(\epsilon)}{\alpha_1!}.$$
Assumption (B) leads to
$$|\epsilon|^{\delta_{\kappa_0}+\kappa_0(k+1)}\le C_3$$
for some $C_3>$ which does not depend on $\epsilon\in\mathcal{E}$ nor $\kappa_0$. In addition to this, there exists $C_4>0$, not depending on $\epsilon\in\mathcal{E}$ such that

$$\left(\frac{(\beta+S+1)^b}{\beta+S-\alpha_1-\kappa_1}\right)^{\frac{\delta_{\kappa_0}+k(\kappa_0-p)}{k}+p+1}\le C_4\frac{\beta!}{\left(\beta-\left\lfloor b(\delta_{\kappa_0}/k+\kappa_0+1)\right\rfloor +1 \right)!}.$$

Moreover, the domain of holomorphy for $b_{s\kappa_0\kappa_1}$ provides the existence of a positive constant $C_5$, not depending on $\epsilon$ such that

$$|b_{s\kappa_0\kappa_1\beta}(\epsilon)|\le C_5^{\beta}\beta!=:B_{s\kappa_0\kappa_1\beta},$$
for every $\epsilon\in\mathcal{E}$.

Let $B_{s\kappa_0\kappa_1}(z)=\sum_{\beta\ge0}B_{s\kappa_0\kappa_1\beta}\frac{z^{\beta}}{\beta!}$, which defines a holomorphic function in some neighborhood of the origin. We consider the Cauchy problem
\begin{align}
\partial_x^Su(x,\epsilon)&=C_1C_2C_3C_4\sum_{(s,\kappa_0,\kappa_1)\in\mathcal{S}}\left[\frac{k^{\kappa_0}}{\Gamma\left(\frac{\delta_{\kappa_0}}{k}\right)}+\sum_{1\le p\le \kappa_0-1}|A_{\kappa_0,p}|\frac{k^{p}}{\Gamma\left(\frac{\delta_{\kappa_0}+k(\kappa_0-p)}{k}\right)}\right]\label{e174}\\
& \times x^{\left\lfloor b(\delta_{\kappa_0}/k+\kappa_0+1)\right\rfloor-1}\partial_{x}^{\left\lfloor b(\delta_{\kappa_0}/k+\kappa_0+1)\right\rfloor-1}\left(B_{s\kappa_0\kappa_1}(x)\partial_x^{\kappa_1}u(x,\epsilon)\right),\nonumber
\end{align}
with initial conditions
\begin{equation}\label{e175}(\partial_x^ju)(0,\epsilon)=w_{j}(\epsilon),\quad 0\le j\le S-1.\end{equation}

One may check that the problem (\ref{e174}), (\ref{e175}) has a unique formal solution
$$u(x,\epsilon)=\sum_{\beta\ge0}u_{\beta}(\epsilon)\frac{x^\beta}{\beta!}\in\mathbb{R}[[x]],$$
whose coefficients satisfy the recursion formula
\begin{align*}
\frac{u_{\beta+S}(\epsilon)}{\beta!}&=C_1C_2C_3C_4\sum_{(s,\kappa_0,\kappa_1)\in \mathcal{S}}\sum_{\alpha_0+\alpha_1=\beta}\frac{B_{s\kappa_0\kappa_1\alpha_0}}{\alpha_0!} \frac{\beta!}{\left(\beta-\left\lfloor b(\delta_{\kappa_0}/k+\kappa_0+1)\right\rfloor+1\right)!}\\
&\times\left[\frac{k^{\kappa_0}}{\Gamma\left(\frac{\delta_{\kappa_0}}{k}\right)}+\sum_{1\le p\le \kappa_0-1}|A_{\kappa_0,p}|\frac{k^p}{\Gamma\left(\frac{\delta_{\kappa_0}+k(\kappa_0-p)}{k}\right)}\right] \frac{u_{\alpha_1+\kappa_1}(\epsilon)}{\alpha_1!}
\end{align*}
for every $\beta\ge0$. It is clear from the initial conditions that $u_j(\epsilon)=w_j(\epsilon)$ for $0\le j \le S-1$. Regarding the construction of the formal power series $u(x,\epsilon)$ one concludes that $$w_{\beta}(\epsilon)\le u_{\beta}(\epsilon),\qquad \beta\ge0.$$
Due to assumption (\ref{e322}), the classical theory of existence of solutions of ODEs can be applied to the problem (\ref{e174}), (\ref{e175}) in order to guarantee that the formal power series $u(x,\epsilon)$ is convergent in a neighborhood of the origin, and the radius of convergence does not depend on the choice of $\epsilon\in\mathcal{E}$. So, there exist $Z_0,Z_1>0$ such that
$$\sum_{\beta\ge0}u_{\beta}(\epsilon)\frac{Z_0^{-\beta}}{\beta!}<Z_1,$$
for all $\epsilon\in\mathcal{E}$. As a consecuence, $0<u_{\beta}(\epsilon)<Z_1Z_0^\beta\beta!$ for every $\beta\ge0$, and
\begin{equation*}
\left\|W_{\beta}(\tau,\epsilon)\right\|_{\beta,\epsilon,\Omega}=w_{\beta}(\epsilon)\le u_{\beta}(\epsilon)\le Z_1Z_0^{\beta}\beta!<\infty,
\end{equation*}
for every $\beta\ge0$. This concludes the proof.
\end{proof}

\section{Analytic solutions of the main Cauchy problem}\label{seccionotra}

In the present section, we state the main singular Cauchy problem under study in this work, and provide an analytic solution for it. The procedure rests on the $k-$Borel summability method of formal power series which is a slightly modified version of the more classical procedure (see~\cite{ba2}, Section 3.2). This novel version has already been used in works such as~\cite{lama1} and~\cite{lama} when studying Cauchy problems under the presence of a small perturbation parameter.

\subsection{Laplace transform and asymptotic expansions}\label{sec42}

\begin{defin}\label{defi2}
Let $k\ge1$ be an integer. Let $(m_{k}(n))_{n\ge1}$ be the sequence
$$m_{k}(n)=\Gamma\left(\frac{n}{k}\right)=\int_{0}^{\infty}t^{\frac{n}{k}-1}e^{-t}dt,\qquad n\ge1.$$
Let $(\mathbb{E},\left\|\cdot\right\|_{\mathbb{E}})$ be a complex Banach space. We say a formal power series
$$\hat{X}(T)=\sum_{n=1}^{\infty}a_{n}T^{n}\in T\mathbb{E}[[T]]$$ is $m_{k}$-summable with respect to $T$ in the direction $d\in[0,2\pi)$ if the following assertions hold:
\begin{enumerate}
\item There exists $\rho>0$ such that the $m_{k}$-Borel transform of $\hat{X}$, $\mathcal{B}_{m_{k}}(\hat{X})$, is absolutely convergent for $|\tau|<\rho$, where
$$\mathcal{B}_{m_{k}}(\hat{X})(\tau)=\sum_{n=1}^{\infty}\frac{a_{n}}{\Gamma\left(\frac{n}{k}\right)}\tau^{n}\in\tau\mathbb{E}[[\tau]].$$
\item The series $\mathcal{B}_{m_{k}}(\hat{X})$ can be analytically continued in a sector $S=\{\tau\in\C^{\star}:|d-\arg(\tau)|<\delta\}$ for some $\delta>0$. In addition to this, the extension is of exponential growth at most $k$ in $S$, meaning that there exist $C,K>0$ such that
$$\left\|\mathcal{B}_{m_{k}}(\hat{X})(\tau)\right\|_{\mathbb{E}}\le Ce^{K|\tau|^{k}},\quad \tau\in S.$$
\end{enumerate}
Under these assumptions, the vector valued Laplace transform of $\mathcal{B}_{m_{k}}(\hat{X})$ along direction $d$ is defined by
$$\mathcal{L}_{m_{k}}^{d}\left(\mathcal{B}_{m_{k}}(\hat{X})\right)(T)=k\int_{L_{\gamma}}\mathcal{B}_{m_{k}}(\hat{X})(u)e^{-(u/T)^k}\frac{du}{u},$$
where $L_{\gamma}$ is the path parametrized by $u\in[0,\infty)\mapsto ue^{i\gamma}$, for some appropriate direction $\gamma$ depending on $T$, such that $L_{\gamma}\subseteq S$ and $\cos(k(\gamma-\arg(T)))\ge\Delta>0$ for some $\Delta>0$.

The function $\mathcal{L}_{m_{k}}^{d}(\mathcal{B}_{m_{k}}(\hat{X})$ is well defined and turns out to be a holomorphic and bounded function in any sector of the form $S_{d,\theta,R^{1/k}}=\{T\in\C^{\star}:|T|<R^{1/k},|d-\arg(T)|<\theta/2\}$, for some $\frac{\pi}{k}<\theta<\frac{\pi}{k}+2\delta$ and $0<R<\Delta/K$. This function is known as the $m_k$-sum of the formal power series $\hat{X}(T)$ in the direction $d$.
\end{defin}

The following are some elementary properties concerning the $m_k$-sums of formal power series which will be crucial in our procedure.

1) The function $\mathcal{L}_{m_{k}}^{d}(\mathcal{B}_{m_{k}}(\hat{X}))(T)$ admits $\hat{X}(T)$ as its Gevrey asymptotic expansion of order $1/k$ with respect to $t$ in $S_{d,\theta,R^{1/k}}$. More precisely, for every $\frac{\pi}{k}<\theta_1<\theta$, there exist $C,M>0$ such that
$$\left\|\mathcal{L}^{d}_{m_{k}}(\mathcal{B}_{m_{k}}(\hat{X}))(T)-\sum_{p=1}^{n-1}a_{p}T^{p}\right\|_{\mathbb{E}}\le CM^{n}\Gamma(1+\frac{n}{k})|T|^{n},$$
for every $n\ge2$ and $T\in S_{d,\theta,R^{1/k}}$. Watson's lemma (see Proposition 11 p.75 in \cite{ba}) allows us to affirm that $\mathcal{L}^{d}_{m_{k}}(\mathcal{B}_{m_{k}}(\hat{X})(T)$ is unique provided that the opening $\theta_1$ is larger than $\frac{\pi}{k}$.

2) Whenever $\mathbb{E}$ is a Banach algebra, the set of holomorphic functions having Gevrey asymptotic expansion of order $1/k$ on a sector with values in $\mathbb{E}$ turns out to be a differential algebra (see Theorem 18, 19 and 20 in~\cite{ba}). This, and the uniqueness provided by Watson's lemma allow us to obtain some properties on $m_{k}$-summable formal power series in direction $d$.

By $\star$ we denote the product in the Banach algebra and also the Cauchy product of formal power series with coefficients in $\mathbb{E}$. Let $\hat{X}_{1}$, $\hat{X}_{2}\in T\mathbb{E}[[T]]$ be $m_{k}$-summable formal power series in direction $d$. Let $q_1\ge q_2\ge1$ be integers. Then $ \hat{X}_{1}+\hat{X}_{2}$, $\hat{X}_{1}\star \hat{X}_{2}$ and $T^{q_1}\partial_{T}^{q_2}\hat{X}_{1}$, which are elements of $T\mathbb{E}[[T]]$, are $m_{k}$-summable in direction $d$. Moreover, one has
$$\mathcal{L}_{m_{k}}^{d}(\mathcal{B}_{m_{k}}(\hat{X}_{1}))(T)+\mathcal{L}_{m_{k}}^{d}(\mathcal{B}_{m_{k}}(\hat{X}_{2}))(T)=\mathcal{L}_{m_{k}}^{d}(\mathcal{B}_{m_{k}}(\hat{X}_{1}+\hat{X}_{2}))(T),$$
$$\mathcal{L}_{m_{k}}^{d}(\mathcal{B}_{m_{k}}(\hat{X}_{1}))(T)\star \mathcal{L}_{m_{k}}^{d}(\mathcal{B}_{m_{k}}(\hat{X}_{2}))(T)=\mathcal{L}_{m_{k}}^{d}(\mathcal{B}_{m_{k}}(\hat{X}_{1}\star\hat{X}_{2}))(T),$$
$$T^{q_1}\partial_{T}^{q_2}\mathcal{L}^{d}_{m_{k}}(\mathcal{B}_{m_{k}}(\hat{X}_{1}))(T)=\mathcal{L}_{m_{k}}^{d}(\mathcal{B}_{m_{k}}(T^{q_1}\partial_{T}^{q_2}\hat{X}_{1}))(T),$$
for every $T\in S_{d,\theta,R^{1/k}}$.

The next proposition is written without proof for it can be found in \cite{lama1}, Proposition 6.

\begin{prop}\label{prop3}
Let $\hat{f}(t)=\sum_{n\ge1}f_nt^n\in\mathbb{E}[[t]]$, where $(\mathbb{E},\left\|\cdot\right\|_{\mathbb{E}})$ is a Banach algebra. Let $k,m\ge1$ be integers. The following formal identities hold.
$$\mathcal{B}_{m_{k}}(t^{k+1}\partial_{t}\hat{f}(t))(\tau)=k\tau^{k}\mathcal{B}_{m_{k}}(\hat{f}(t))(\tau),$$
$$\mathcal{B}_{m_{k}}(t^{m}\hat{f}(t))(\tau)=\frac{\tau^{k}}{\Gamma\left(\frac{m}{k}\right)}\int_{0}^{\tau^{k}}(\tau^{k}-s)^{\frac{m}{k}-1}\mathcal{B}_{m_{k}}(\hat{f}(t))(s^{1/k})\frac{ds}{s}.$$
\end{prop}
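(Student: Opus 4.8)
The plan is to establish both identities as equalities of formal power series in $\tau$, verifying them coefficient by coefficient. Since the $m_k$-Borel transform, the operator $t^{k+1}\partial_t$, multiplication by $t^m$, and the integral operator on the right-hand side of the second identity are all $\mathbb{E}$-linear in $\hat f$, it suffices to treat a single monomial $\hat f(t)=f_n t^n$ with $f_n\in\mathbb{E}$ and $n\ge1$ and then sum over $n\ge1$; the integral on the right is understood as acting term by term on the formal Borel series, so no question of convergence arises. The only analytic facts I would use are the functional equation $\Gamma(x+1)=x\Gamma(x)$ and the Euler Beta integral $\int_0^1 u^{a-1}(1-u)^{b-1}\,du=\Gamma(a)\Gamma(b)/\Gamma(a+b)$, valid for $a,b>0$.

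For the first identity, I would compute $t^{k+1}\partial_t(t^n)=n\,t^{n+k}$, whose $m_k$-Borel transform is $n\,\tau^{n+k}/\Gamma\!\big((n+k)/k\big)$, while $k\tau^k\mathcal{B}_{m_k}(t^n)(\tau)=k\,\tau^{n+k}/\Gamma(n/k)$. These agree because $\Gamma\!\big((n+k)/k\big)=\Gamma\!\big(n/k+1\big)=(n/k)\Gamma(n/k)$, so that $n/\Gamma\!\big((n+k)/k\big)=k/\Gamma(n/k)$; summing over $n$ yields the asserted equality.

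For the second identity, the left-hand side is simply $\mathcal{B}_{m_k}(t^{m}t^n)(\tau)=\tau^{n+m}/\Gamma\!\big((n+m)/k\big)$. On the right-hand side I would substitute $\mathcal{B}_{m_k}(t^n)(s^{1/k})=s^{n/k}/\Gamma(n/k)$ and reduce to the integral $\int_0^{\tau^k}(\tau^k-s)^{m/k-1}s^{n/k-1}\,ds$; the change of variable $s=\tau^k u$ converts it into $(\tau^k)^{(n+m)/k-1}\int_0^1(1-u)^{m/k-1}u^{n/k-1}\,du$, where the inner integral is the Beta integral $B(n/k,m/k)=\Gamma(n/k)\Gamma(m/k)/\Gamma\!\big((n+m)/k\big)$ (convergent at both endpoints since $m,n\ge1$ force the exponents $m/k-1$ and $n/k-1$ to exceed $-1$). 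Multiplying by $\tau^k/\Gamma(m/k)$ and cancelling $\Gamma(n/k)$ and $\Gamma(m/k)$ leaves exactly $\tau^{n+m}/\Gamma\!\big((n+m)/k\big)$, matching the left-hand side, and summing over $n$ finishes the argument. The computations are routine; the only place needing a little care is the reduction of this integral to a Beta function together with the correct bookkeeping of the Gamma arguments, and I anticipate no genuine obstacle.
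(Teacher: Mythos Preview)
Your proof is correct: the coefficient-by-coefficient verification via the functional equation $\Gamma(x+1)=x\Gamma(x)$ for the first identity and the Euler Beta integral for the second goes through exactly as you describe. The paper itself does not prove this proposition; it merely cites \cite{lama1}, Proposition~6, so there is no in-paper argument to compare against, and your direct monomial computation is the standard route.
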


\subsection{Analytic solutions of a singular Cauchy problem}

Let $S,s_1\ge1$ $r_1\ge0$ be integers and $a\in\C^{\star}$. We define $r$ as in (\ref{e111}). Provided that Assumption (A) holds, we consider the construction made in Section~\ref{seccion1} for $\mathcal{E}$, $S_d$ (also $\delta_1$) and $D(0,\rho_0)$. We take $\gamma\in[0,2\pi)$ such that $\R_+e^{i\gamma}\subseteq S_d\cup\{0\}$.

Let $\mathcal{S}$ be as in Section~\ref{seccion2} and satisfying Assumption (B). For every $(s,\kappa_0,\kappa_1)\in \mathcal{S}$ we take the function $b_{s\kappa_0\kappa_1}(z,\epsilon)$ and $A_{\kappa_0 p}\in\C$ for all $0\le p\le \kappa_0-1$ as those in Section~\ref{seccion2}.

Let $\epsilon\in\mathcal{E}$. We consider the Cauchy problem
\begin{equation}\label{e257}
((T^{k+1}\partial_T)^{s_1}+a)\partial_z^{S}Y(T,z,\epsilon)=\sum_{(s,\kappa_0,\kappa_1)\in\mathcal{S}}b_{s\kappa_0\kappa_1}(z,\epsilon)\epsilon^{-r(s-\kappa_0)}T^{s}(\partial_T^{\kappa_0}\partial_z^{\kappa_1}Y)(T,z,\epsilon),
\end{equation}
under the initial conditions
\begin{equation}\label{e258}
(\partial_{z}^{j}Y)(T,0,\epsilon)=Y_{j}(T,\epsilon),\quad 0\le j\le S-1
\end{equation}
constructed as follows:
For every $0\le j\le S-1$, one considers a holomorphic function $\tau\mapsto W_{j}(\tau,\epsilon)$ defined in $\Omega$. Moreover, we assume (\ref{e322}) holds, and construct $Y_{j}(T,\epsilon):=\mathcal{L}_{m_{k}}^d(W_{j}(\tau,\epsilon))(T)$,
as the Laplace transform with respect to the variable $\tau$, along direction $d$. Condition 2 in Definition~\ref{defi2} is fulfilled so that the previous definition makes sense, providing a function $T\mapsto Y_{j}(T,\epsilon)$ holomorphic in the set of all $T=|T|e^{i\theta}$ such that $\cos(k(\gamma-\theta))\ge \Delta$ for some $\Delta>0$, and $|T|\le |\epsilon|^{r}\left(\frac{\Delta}{\sigma \xi(b)}\right)^{1/k}$, with $\xi(b)=\sum_{n\ge0}\frac{1}{(n+1)^b}$.

The next result follows analogous steps as the corresponding one of Theorem 1 in~\cite{lama}. It is based on an expansion of the operators involved which can be found in~\cite{taya}.

\begin{theo}\label{teo280}
Let $\epsilon\in\mathcal{E}$. Departing from the constructions within the present subsection, problem (\ref{e257}), (\ref{e258}) admits a holomorphic solution
\begin{equation}\label{e282}
(T,z)\mapsto Y(T,z,\epsilon):=\sum_{\beta\ge0}\mathcal{L}_{m_{k}}^{d}(W_{\beta}(\tau,\epsilon))(T)\frac{z^{\beta}}{\beta!}
\end{equation}
defined in
$$S_{d,\theta, |\epsilon|^{r}\left(\frac{\Delta}{\sigma\xi(b)}\right)^{1/k}}\times D(0,\frac{1}{Z_0}),$$
where $Z_0$ is introduced in (\ref{e194}), and $$S_{d,\theta,|\epsilon|^r\left(\frac{\Delta}{\sigma \xi(b)}\right)^{1/k}}=\left\{T\in\C^{\star}:|T|\le |\epsilon|^r\left(\frac{\Delta}{\sigma \xi(b)}\right)^{1/k},|\arg (T)-d|<\frac{\theta}{2}\right\}$$
for some $\theta>\pi/k$. The Laplace transform in (\ref{e282}) is taken with respect to $\tau$ variable.
\end{theo}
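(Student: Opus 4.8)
The plan is to assemble the solution from three independent pieces: the good definition of each Laplace transform $\mathcal{L}^{d}_{m_{k}}(W_{\beta})$, a bound on it that is uniform in $\beta$ up to the factor $Z_1Z_0^{\beta}\beta!$ so that the $z$-series (\ref{e282}) converges, and the fact that $\mathcal{L}^{d}_{m_{k}}$ intertwines the Borel-plane equation (\ref{e127}) with (\ref{e257}). Throughout I would work on the sector $|\arg(T)-d|<\theta/2$ with $|T|$ strictly below $|\epsilon|^{r}(\Delta/(\sigma\xi(b)))^{1/k}$, recovering the full radius at the end. For the first point, $W_{\beta}\in F_{\beta,\epsilon,\Omega}$ and Definition~\ref{defi74} give
$$|W_{\beta}(\tau,\epsilon)|\le \|W_{\beta}(\tau,\epsilon)\|_{\beta,\epsilon,\Omega}\,\frac{|\tau/\epsilon^{r}|}{1+|\tau/\epsilon^{r}|^{2k}}\exp\!\left(\sigma r_b(\beta)\left|\frac{\tau}{\epsilon^{r}}\right|^{k}\right),\qquad \tau\in\Omega.$$
Since $b>1$ one has $r_b(\beta)\le\xi(b)$ for every $\beta$, so the exponential growth rate in $\tau$ is bounded uniformly in $\beta$; together with $\cos(k(\gamma-\arg T))\ge\Delta$ this makes $\Delta(|\epsilon|^{r}/|T|)^{k}-\sigma r_b(\beta)$ strictly positive (uniformly in $\beta$ on the shrunken domain), which is exactly the condition for the integral $\mathcal{L}^{d}_{m_{k}}(W_{\beta})(T)=k\int_{L_{\gamma}}W_{\beta}(u,\epsilon)e^{-(u/T)^{k}}\frac{du}{u}$ to converge.

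Next I would turn this into the uniform estimate. Parametrising $u=\rho e^{i\gamma}$ and substituting $x=\rho/|\epsilon|^{r}$, the integrand is controlled by $\frac{x}{1+x^{2k}}\exp(-(\Delta(|\epsilon|^{r}/|T|)^{k}-\sigma r_b(\beta))x^{k})$, whose integral over $(0,\infty)$ is bounded by a constant $C$ independent of $\beta$. Invoking the bound (\ref{e194}) of Proposition~\ref{prop150} then gives
$$\left|\mathcal{L}^{d}_{m_{k}}(W_{\beta})(T)\right|\le C\,\|W_{\beta}(\tau,\epsilon)\|_{\beta,\epsilon,\Omega}\le C Z_1 Z_0^{\beta}\beta!.$$
Consequently the series (\ref{e282}) is dominated by $CZ_1\sum_{\beta\ge0}(Z_0|z|)^{\beta}$, which converges normally on $D(0,1/Z_0)$, so $Y$ is holomorphic on the claimed product domain and the termwise manipulations below are justified on the shrunken domain, the full radius being restored by analytic continuation.

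Finally I would verify the equation. The initial conditions are immediate, since $\partial_{z}^{j}Y(T,0,\epsilon)=\mathcal{L}^{d}_{m_{k}}(W_{j})(T)=Y_{j}(T,\epsilon)$. Writing $\mathcal{D}=T^{k+1}\partial_{T}$, the combinatorial expansion of~\cite{taya} supplies constants $A_{\kappa_0,p}$ (with $A_{\kappa_0,\kappa_0}=1$) such that
$$T^{\kappa_0(k+1)}\partial_{T}^{\kappa_0}=\sum_{p=1}^{\kappa_0}A_{\kappa_0,p}\,T^{k(\kappa_0-p)}\mathcal{D}^{p};$$
these are precisely the constants fixed before the statement, and multiplying by $T^{\delta_{\kappa_0}}$ with $s=\kappa_0(k+1)+\delta_{\kappa_0}$ from Assumption (B) reproduces the leading term $T^{\delta_{\kappa_0}}\mathcal{D}^{\kappa_0}$ and the lower terms $A_{\kappa_0,p}T^{\delta_{\kappa_0}+k(\kappa_0-p)}\mathcal{D}^{p}$ of the right-hand side of (\ref{e127}). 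I would then use the two intertwining relations dual, through $\mathcal{L}^{d}_{m_{k}}$, to Proposition~\ref{prop3}: differentiation under the integral sign gives $\mathcal{D}\,\mathcal{L}^{d}_{m_{k}}(f)=\mathcal{L}^{d}_{m_{k}}(k\tau^{k}f)$, hence $\mathcal{D}^{p}\mathcal{L}^{d}_{m_{k}}(f)=\mathcal{L}^{d}_{m_{k}}((k\tau^{k})^{p}f)$, while multiplication by $T^{m}$ is the Laplace counterpart of the convolution operator in the second identity of Proposition~\ref{prop3}. Applying these term by term to (\ref{e282}), the operator $((T^{k+1}\partial_{T})^{s_1}+a)$ sends $\partial_{z}^{S}Y$ to $\sum_{\beta}\mathcal{L}^{d}_{m_{k}}\big(((k\tau^{k})^{s_1}+a)W_{\beta+S}\big)z^{\beta}/\beta!$, and the expanded right-hand side, after the $z$-Cauchy product with $b_{s\kappa_0\kappa_1}(z,\epsilon)$, becomes the Laplace transform of the right-hand side of (\ref{e127}). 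Matching coefficients of $z^{\beta}/\beta!$ reduces both sides to the recursion (\ref{e145}) already satisfied by the $W_{\beta}$, so $Y$ solves (\ref{e257}).

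I expect the principal difficulty to be the estimate of the middle paragraph: obtaining a bound on $|\mathcal{L}^{d}_{m_{k}}(W_{\beta})(T)|$ uniform in $\beta$ up to the factor $Z_1Z_0^{\beta}\beta!$, since this is what forces the $z$-series to converge with the very radius $1/Z_0$ produced in Proposition~\ref{prop150}. The decisive point is the inequality $r_b(\beta)\le\xi(b)$, which is exactly why the admissible radius in $T$ carries the factor $(\Delta/(\sigma\xi(b)))^{1/k}$ independent of $\beta$; once this is secured, the operator expansion and the intertwining relations make the verification of the equation in the last paragraph routine bookkeeping.
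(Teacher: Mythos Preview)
Your proposal is correct and follows essentially the same route as the paper: both rewrite $T^{s}\partial_{T}^{\kappa_0}$ via the expansion of~\cite{taya}, invoke the estimates (\ref{e194}) from Proposition~\ref{prop150} together with $r_b(\beta)\le\xi(b)$ to bound $\mathcal{L}^{d}_{m_{k}}(W_{\beta})$ uniformly in $\beta$ and thus obtain convergence on $D(0,1/Z_0)$, and then appeal to the Borel/Laplace intertwining identities (Proposition~\ref{prop3} and its Laplace counterpart in Section~\ref{sec42}, item 2)) to conclude that $Y$ solves (\ref{e257}). The only cosmetic difference is presentation order: the paper first passes from (\ref{e257}) to (\ref{e127}) via formal Borel transform and then reads the solution back through Laplace, whereas you start from the already-solved (\ref{e127}) and push forward through $\mathcal{L}^{d}_{m_{k}}$; the content is identical.
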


\begin{proof}
From Assumption (B), one can write $T^s\partial_{T}^{\kappa_0}$ in the form $T^{\delta_{\kappa_0}}T^{\kappa_0(k+1)}\partial_{T}^{\kappa_0}$ for every $(s,\kappa_0,\kappa_1)\in\mathcal{S}$, for some $\delta_{\kappa_0}>0$. The expansion formula in page 40 of~\cite{taya} allows us to write
\begin{equation*}
T^{\delta_{\kappa_0}}T^{\kappa_0(k+1)}\partial_{T}^{\kappa_0}=T^{\delta_{\kappa_0}}\left((T^{k+1}\partial_T)^{\kappa_0}+\sum_{1\le p\le \kappa_0-1}A_{\kappa_0,p}T^{k(\kappa_0-p)}(T^{k+1}\partial_{T})^p\right),
\end{equation*}
for some $A_{\kappa_0,p}\in\C$. Equation (\ref{e257}) is transformed into
\begin{equation}\label{e347}((T^{k+1}\partial_T)^{s_1}+a)\partial_z^SY(T,z,\epsilon)\end{equation}
$$
=\sum_{(s,\kappa_0,\kappa_1)\in\mathcal{S}}b_{\kappa_0\kappa_1}(z,\epsilon)\epsilon^{-r(s-\kappa_0)}T^{\delta_{\kappa_0}}\left[(T^{k+1}\partial_{T})^{\kappa_0}+\sum_{1\le p\le \kappa_0-1}A_{\kappa_0,p}T^{k(\kappa_0-p)}(T^{k+1}\partial_{T})^p\right]\partial_{z}^{\kappa_1}Y(T,z,\epsilon).
$$
By means of the formal Borel transform with respect to $T$, and bearing in mind the properties of such operator (see Proposition~\ref{prop3}), equation (\ref{e347}) is transformed into (\ref{e127}), with $W(\tau,z,\epsilon)=\mathcal{B}_{m_{k}}(Y(T,z,\epsilon))(\tau)$. In view of (\ref{e322}), one has $W_j\in F_{j,\epsilon,\Omega}$ for $0\le j\le S-1$, and moreover we may apply Proposition~\ref{prop150} to the Cauchy problem with equation (\ref{e127}) and initial data \begin{equation}\label{e127a}
(\partial_z^jW)(\tau,0,\epsilon)=W_j(\tau,\epsilon),\quad 0\le j\le S-1.
\end{equation}
Consequently, there exists a formal solution of the previous Cauchy problem of the form
\begin{equation}\label{e352a}
\sum_{\beta\ge0}W_{\beta}(\tau,\epsilon)\frac{z^\beta}{\beta!},
\end{equation}
with $W_{\beta}(\tau,\epsilon)\in F_{\beta,\epsilon,\Omega}$ for every $\epsilon\in\mathcal{E}$ and $\beta\ge0$. In addition to this, there exist $Z_0,Z_1>0$ with
$$|W_{\beta}(\tau,\epsilon)|\le Z_1Z_0^{\beta}\beta!\frac{\left|\frac{\tau}{\epsilon^r}\right|}{1+\left|\frac{\tau}{\epsilon^r}\right|^{2k}}\exp\left(\sigma r_b(\beta)\left|\frac{\tau}{\epsilon^r}\right|^{k}\right), \quad \beta\ge0,$$
for every $\tau\in\Omega$.
If one writes $T=|T|e^{i\theta}$, we deduce that
\begin{align*}
\left|k\int_{L_{\gamma}}W_{\beta}(u,\epsilon)e^{\left(\frac{u}{T}\right)^{k}}\frac{du}{u}\right|&\le k\int_{0}^{\infty}|W_{\beta}(se^{i\gamma},\epsilon)|e^{-\frac{s^{k}}{|T|^{k}}\cos(k(\gamma-\arg(T)))}ds\\
&\le kZ_1Z_0^{\beta}\beta! \int_{0}^{\infty}\exp(\left[\frac{\sigma\xi(b)}{|\epsilon|^{rk}}-\frac{\Delta}{|T|^k}\right]s^k)ds,
\end{align*}
for every $\beta\ge0$.
This entails that the function $\mathcal{L}_{m_{k}}^{d}(W_{\beta}(\tau,\epsilon))(T)$ is well defined for $T\in S_{d,\theta,|\epsilon|^{r}\left(\frac{\Delta}{\sigma\xi(b)}\right)^{1/k}},$ for every $\frac{\pi}{k}<\theta<\frac{\pi}{k}+2\delta$.
Moreover,
$$(T,z)\mapsto Y(T,z,\epsilon):=\sum_{\beta\ge0}\mathcal{L}_{m_{k}}^{d}(W_{\beta}(\tau,\epsilon))(T)\frac{z^{\beta}}{\beta!}$$
defines a holomorphic function on $S_{d,\theta,|\epsilon|^{r}\left(\frac{\Delta}{\sigma\xi(b)}\right)^{1/k}}\times D(0,\frac{1}{Z_0})$, and it turns out to be a solution of the problem (\ref{e257}), (\ref{e258}) from the properties of Laplace transform in 2), Section~\ref{sec42} and the fact that (\ref{e352a}) is a formal solution of (\ref{e127}), (\ref{e127a}).


\end{proof}

\section{Formal series solutions and strongly regular multi-level asymptotic expansions in a complex parameter for a Cauchy problem}\label{seccionotra2}

In this section we construct analytic solutions of the main problem under study and provide a formal solution in the perturbation parameter linked to the analytic one by means of asymptotic expansions of different nature. This behavior depends on the coefficients appearing in the problem and its geometry. A multi-level asymptotic expansion is observed where not only classical Gevrey asymptotic expansions come up, but also others belonging to a larger class, related to the so called strongly regular sequences.

At this point, we recall the main properties of strongly regular sequences and its related asymptotic expansions. We refer to~\cite{javi1} and~\cite{lamasa} for further details.

\subsection{Strongly regular sequences, asymptotic expansions and summability}\label{subs51}

In the following, $\mathbb{M}=(M_p)_{p\in\N_0}$ stands for a sequence of positive real numbers, with $M_0=1$.

\begin{defin}\label{defi310}
We say $\mathbb{M}$ is a strongly regular sequence if the following conditions hold:
\begin{itemize}
\item[$(\alpha_0)$] $\mathbb{M}$ is logarithmically convex, it is to say, $M_p^2\le M_{p-1}M_{p+1}$ for all positive integer $p$.
\item[$(\mu)$] $\mathbb{M}$ is of moderate growth: there exists $A>0$ such that $M_{p+q}\le A^{p+q}M_{p}M_{q}$ for every $p,q\in\N_0$.
\item[$(\gamma_1)$] $\mathbb{M}$ satisfies the strong non-quasianalyticity condition: there exists $B>0$ such that
$$\sum_{q\ge p}\frac{M_{q}}{(q+1)M_{q+1}}\le B\frac{M_{p}}{M_{p+1}},\quad p\in\N_0.$$
\end{itemize}
\end{defin}

Any Gevrey sequence of certain positive order $\alpha$, $(p!^{\alpha})_{p\ge0}$, is a strongly regular sequence, but there are other sequences appearing in the literature such as those related to the  $1^{+}$ level involved in the study of difference equations (see~\cite{im} among others).

One can generalize the concept of Gevrey asymptotic expansions in the framework of strongly regular sequences.
%
From now on, given a sector $S$ with vertex at the origin, we say $T$ is a (proper) subsector of $S$ whenever the radius of $T$ is finite and $\overline{T}\setminus\{0\}\subseteq S$.

\begin{defin}\label{defi314}
Let $(\mathbb{F},\left\|\cdot\right\|_{\mathbb{F}})$ be a complex Banach space. Given a strongly regular sequence $\mathbb{M}=(M_{p})_{p\ge0}$, we say a holomorphic function $f$, defined in a sector $S\subseteq\C$ with vertex at the origin and with values in $\mathbb{F}$ admits the formal power series $\hat{f}=\sum_{p=0}^{\infty}a_pz^p\in\mathbb{F}[[z]]$ as its $\mathbb{M}$-asymptotic expansion in $S$ (when $z$ tends to 0) if for every (proper) subsector $T$ of $S$ there exist $C_T, A_T>0$ such that for all integer $n\ge1$, one has
$$\left\|f(z)-\sum_{0\le p\le n-1}a_pz^p\right\|_{\mathbb{F}}\le C_TA_T^nM_n|z|^n,\quad z\in T.$$
\end{defin}

Let $\mathbb{M}$ be a sequence of positive real numbers verifying properties $(\alpha_0)$ and $(\gamma_1)$. The map $$h_{\mathbb{M}}(t):=\inf_{p\ge0}M_{p}t^p,\quad t>0; \qquad h_{\mathbb{M}}(0)=0,$$
defines a non-decreasing continuous map in $[0,\infty)$ onto $[0,1]$. Null $\mathbb{M}$-asymptotic expansions are characterized by means of $h_{\mathbb{M}}$ in the following terms.

\begin{prop}\label{prop332}
Let $\mathbb{F}$ be a complex Banach space and let $\mathbb{M}$ be a strongly regular sequence. The following assertions are equivalent:
\begin{itemize}
\item[(i)] $f$ admits null $\mathbb{M}$-asymptotic expansion (i.e.  in Definition~\ref{defi314}, $a_p=0$ for all $p\ge0$) in a sector $S$.
\item[(ii)] For every (proper) subsector $T$ of $S$ there exist $C_T,A_T>0$ such that
\begin{equation}\label{e337}
\left\|f(z)\right\|_{\mathbb{F}}\le C_Th_{\mathbb{M}}\left(A_T|z|\right),
\end{equation}
for all $z\in T$.
\end{itemize}
\end{prop}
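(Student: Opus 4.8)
The plan is to exploit directly the definition of the auxiliary function $h_{\mathbb{M}}(t)=\inf_{p\ge0}M_pt^p$, observing that the estimates appearing in a null $\mathbb{M}$-asymptotic expansion are exactly the one-parameter family of bounds whose infimum produces $h_{\mathbb{M}}$. The only genuine subtlety is the contribution of the index $p=0$, which equals $M_0t^0=1$, so that $h_{\mathbb{M}}(t)=\min\{1,\inf_{p\ge1}M_pt^p\}$; keeping track of this constant term is what makes the two implications slightly asymmetric and concentrates all the work in one of them.

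For the implication (ii)$\Rightarrow$(i), which I expect to be immediate, I would fix a proper subsector $T$ and the constants $C_T,A_T>0$ supplied by (ii). Since $h_{\mathbb{M}}(A_T|z|)=\inf_{p\ge0}M_p(A_T|z|)^p\le M_n(A_T|z|)^n$ for every fixed $n\ge0$, the hypothesis $\|f(z)\|_{\mathbb{F}}\le C_Th_{\mathbb{M}}(A_T|z|)$ yields $\|f(z)\|_{\mathbb{F}}\le C_TA_T^nM_n|z|^n$ for all $n\ge1$ and all $z\in T$. This is precisely the estimate of Definition~\ref{defi314} with $a_p=0$, so $f$ admits null $\mathbb{M}$-asymptotic expansion in $S$.

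The converse (i)$\Rightarrow$(ii) is the main point. Fixing $T$ and the constants $C_T,A_T$ from the null expansion, I would take the infimum over $n\ge1$ in $\|f(z)\|_{\mathbb{F}}\le C_TM_n(A_T|z|)^n$, obtaining $\|f(z)\|_{\mathbb{F}}\le C_T\inf_{n\ge1}M_n(A_T|z|)^n$. When $\inf_{n\ge1}M_n(A_T|z|)^n\le1$ this infimum coincides with $h_{\mathbb{M}}(A_T|z|)$ and the desired bound follows at once. The remaining case, where the infimum over $n\ge1$ exceeds $1$ and hence $h_{\mathbb{M}}(A_T|z|)=1$, is the one requiring care: here I would use that $T$, being a proper subsector, has finite radius $R_T$, so the single estimate with $n=1$ already gives $\|f(z)\|_{\mathbb{F}}\le C_TM_1A_TR_T$, a bound we may rewrite as $C_TM_1A_TR_T\cdot h_{\mathbb{M}}(A_T|z|)$ since the right-hand factor equals $1$. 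Choosing $\tilde{C}_T=C_T\max\{1,M_1A_TR_T\}$ then establishes (ii) with constants $\tilde{C}_T$ and $A_T$. Thus the principal obstacle is not analytic depth but the bookkeeping of the $p=0$ term together with the exploitation of the finite radius of $T$; I would also remark that positivity of $h_{\mathbb{M}}$ on $(0,\infty)$, guaranteed for a strongly regular sequence, ensures the resulting inequality is meaningful rather than vacuous.
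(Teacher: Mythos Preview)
Your argument is correct and follows the same approach as the paper, which simply states that the equivalence is direct from the definitions of $h_{\mathbb{M}}$ and $\mathbb{M}$-asymptotic expansions. You have supplied the details the paper omits, including the careful handling of the $p=0$ term via the finite radius of a proper subsector.
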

\begin{proof}
It is direct from the definition of $h_{\mathbb{M}}$ and $\mathbb{M}$-asymptotic expansions.
\end{proof}

A concept of summability in this general framework was put forward in~\cite{lamasa}. A relevant role is played by the value $\omega(\mathbb{M})$ defined by
$$\omega(\mathbb{M}):=\lim_{r\to\infty}\frac{\log(r)}{\log^{+}(-\log(h_{\mathbb{M}}(1/r)))}.$$
$\omega(\mathbb{M})$ turns out to be a positive real number when departing from a strongly regular sequence $\mathbb{M}$ (see~\cite{javi1}).

\begin{defin}\label{defi344}
Let $\mathbb{F}$ be a complex Banach space and let $\mathbb{M}$ be a strongly regular sequence and $d\in\R$. We say a formal power series $\hat{f}=\sum_{n\ge0}\frac{f_n}{n!}z^n\in\mathbb{F}[[z]]$ is $\mathbb{M}-$summable in direction $d$ if there exists a sector $S$ with bisecting direction $d$ and opening larger than $\omega(\mathbb{M})\pi$, and a holomorphic function $f$ in $S$ with values in $\mathbb{F}$, which admits $\hat{f}$ as its $\mathbb{M}$-asymptotic expansion in~$S$.
\end{defin}
Under the previous conditions, $f$ is unique.

This last definition generalizes the classical one of Gevrey $\kappa$-summable formal power series, for $\kappa>0$, which corresponds to the case $\mathbb{M}=(p!^{1/\kappa})_{p \ge0}$, for which $\omega(\mathbb{M})$ equals $1/\kappa$. Watson's classical lemma guarantees uniqueness of a function admitting in a sector a
$(p!^{1/\kappa})_{p \ge0}-$asymptotic expansion, whenever the opening of the sector is larger than $\pi/\kappa$.

\subsection{Formal series solutions and strongly regular multi-level asymptotics in a complex parameter}

In this subsection we construct the analytic and formal solutions in powers of the perturbation parameter, and relate them by means of different asymptotic behavior, depending on the nature of the coefficients appearing in the equation and the geometry of the problem.

Let $r_1,s_1$ be nonnegative integers, with $s_1>0$. We take $a\in\C^{\star}$ and define $r$ as in (\ref{e111}).

\begin{defin}\label{defin1}
Let $(\mathcal{E}_{i})_{0\le i\le \nu-1}$ be a finite family of open sectors such that $\mathcal{E}_{i}$ has its vertex at the origin and finite radius $r_{\mathcal{E}_{i}}>0$ for all $0\le i\le \nu-1$. We say this family provides a good covering in $\C^{\star}$ if $\mathcal{E}_{i}\cap\mathcal{E}_{i+1}\neq\emptyset$ for $0\le i\le \nu-1$ (where $\mathcal{E}_{\nu}:=\mathcal{E}_{0}$) and $\cup_{0\le i\le \nu-1}\mathcal{E}_{i}=\mathcal{U}\setminus\{0\}$ for some neighborhood of the origin $\mathcal{U}$.
\end{defin}

We assume $r_{\mathcal{E}_{i}}:=r_{\mathcal{E}}$ for every $0\le i\le \nu-1$, for some $r_{\mathcal{E}}>0$.

\begin{defin}\label{defin2}
Let $(\mathcal{E}_{i})_{0\le i\le \nu -1}$ be a good covering in $\C^{\star}$. Let $\mathcal{T}$ be an open sector with vertex at 0 and finite radius, say $r_{\mathcal{T}}>0$ and a family of open sectors
$$S_{d_i,\theta,r_{\mathcal{E}}^{r}r_{\mathcal{T}}}=\left\{t\in\C^{\star}:|t|\le r_{\mathcal{E}}^{r}\cdot r_{\mathcal{T}}, |d_i-\arg(t)|<\frac{\theta}{2}\right\},$$
with $d_i\in[0,2\pi)$ for $0\le i \le \nu-1$, and $\pi/k<\theta<\pi/k+\delta$ for some small enough $\delta>0$, under the following properties:
\begin{enumerate}
\item One has $\arg(d_i)\neq\frac{\pi(2j+1)+\arg(a)}{ks_1}$ for all $j=0,\ldots,ks_1-1$.
\item For every $0\le i \le \nu-1$, $t\in\mathcal{T}$ and $\epsilon\in\mathcal{E}_{i}$ one has $\epsilon^rt\in S_{d_{i},\theta,r_{\mathcal{E}}^rr_{\mathcal{T}}}$.
\end{enumerate}
Under the previous settings, we say the family $((S_{d_{i},\theta,r_{\mathcal{E}}^rr_{\mathcal{T}}})_{0\le i\le \nu-1},\mathcal{T})$ is associated with the good covering $(\mathcal{E}_{i})_{0\le i \le \nu-1}$.
\end{defin}

Let us consider a strongly regular sequence $\mathbb{M}=(M_{p})_{p\ge 0}$, and we assume that $\omega(\mathbb{M})<1/(rk)$ (the case $\omega(\mathbb{M})>1/(rk)$ may be treated along similar lines, while the situation in which $\omega(\mathbb{M})=1/(rk)$ is not included in our study).

Let $(\mathcal{E}_i)_{0\le i \le \nu-1}$ be a good covering which satisfies that the opening of $\mathcal{E}_{i}$ is larger than $\omega(\mathbb{M})\pi$ for every $0\le i\le \nu-1$. We also choose a family $((S_{d_i,\theta,r_{\mathcal{E}}^rr_{\mathcal{T}}})_{0\le i\le \nu-1},\mathcal{T})$ associated with the previous good covering. For the sake of brevity, for every $0\le i\le \nu-1$ we put $S_{d_i}:=S_{d_i,\theta,r_{\mathcal{E}}^rr_{\mathcal{T}}}$ and $\Omega_{i}:=S_{d_i}\cup D(0,\rho_0)$.

Let $S\ge1$ be an integer. We also consider a finite subset $\mathcal{S}$ of $\N^3$. For every $(s,\kappa_0,\kappa_1)\in\mathcal{S}$, let $\hat{b}_{s\kappa_0\kappa_1}(z,\epsilon)$ be a formal power series of the form
$$\hat{b}_{s\kappa_0\kappa_1}(z,\epsilon)=\sum_{\beta\ge0}\hat{b}_{s\kappa_0\kappa_1\beta}(\epsilon)\frac{z^{\beta}}{\beta!}.$$
We assume that the formal power series $\hat{b}_{s\kappa_{0}\kappa_{1}}(z,\epsilon)$ belongs to $\mathcal{O}(D)[[\epsilon]]$,
where $\mathcal{O}(D)$ denotes the Banach space of bounded holomorphic functions on a disc $D$ centered at 0 endowed
with the sup norm. We make the hypothesis that $\hat{b}_{s\kappa_{0},\kappa_{1}}(z,\epsilon)$ is
$\mathbb{M}-$summable on $\mathcal{E}_{i}$, for all $0 \leq i \leq \nu-1$ (see Definition~\ref{defi344}). Moreover, we assume that $\hat{b}_{s\kappa_0\kappa_10}(\epsilon)\equiv0$ for every $(s,\kappa_0,\kappa_1)\in\mathcal{S}$ (this condition will only be needed in the proof of our last result, Theorem \ref{teo726}). We denote by
$\epsilon \mapsto b_{s\kappa_{0}\kappa_{1}}^{(i)}(z,\epsilon)$ the $\mathbb{M}-$sum of $\hat{b}_{s\kappa_{0},\kappa_{1}}(z,\epsilon)$ in $\mathcal{E}_{i}$ (seen as
holomorphic function on $\mathcal{E}_{i}$ with values in $\mathcal{O}(D)$). Using Cauchy formula (in the variable $z$)
together with Proposition~\ref{prop332}, we get
in particular that the coefficients $b_{s\kappa_{0}\kappa_{1}\beta}^{(i)}(\epsilon)$ of the Taylor expansion of
$b_{s\kappa_{0}\kappa_{1}}^{(i)}(z,\epsilon)$,
$$ b_{s\kappa_{0}\kappa_{1}}^{(i)}(z,\epsilon) = \sum_{\beta \geq 0} b_{s\kappa_{0}\kappa_{1}\beta}^{(i)}(\epsilon)
\frac{z^{\beta}}{\beta !} $$
satisfy estimates of the following form: There exist constants $c_1,c_2,c_3>0$ with
\begin{equation}\label{e443}
|b_{s\kappa_{0}\kappa_{1}\beta}^{(i)}(\epsilon)| \leq c_1c_2^{\beta} \beta!, \quad\epsilon\in\mathcal{E}_i,
\end{equation}
and
\begin{equation}\label{e444}
|b_{s\kappa_{0}\kappa_{1}\beta}^{(i+1)}(\epsilon) - b_{s\kappa_{0}\kappa_{1}\beta}^{(i)}(\epsilon)| \leq c_1c_2^{\beta} \beta!
h_{\mathbb{M}}(c_3|\epsilon|),
\quad\epsilon\in\mathcal{E}_i\cap\mathcal{E}_{i+1},
\end{equation}
for all $\beta \geq 0$.




We study the formal problem
\begin{equation*}
(\epsilon^{r_1}(t^{k+1}\partial_t)^{s_1}+a)\partial_{z}^{S}\hat{X}(t,z,\epsilon)= \sum_{(s,\kappa_0,\kappa_1)\in\mathcal{S}}\hat{b}_{s\kappa_0\kappa_1}(z,\epsilon) t^s(\partial_t^{\kappa_0}\partial_z^{\kappa_1}\hat{X})(t,z,\epsilon),
\end{equation*}
and for every $0\le i\le \nu-1$, we study the Cauchy problem
\begin{equation}\label{e394}
(\epsilon^{r_1}(t^{k+1}\partial_t)^{s_1}+a)\partial_{z}^{S}X_{i}(t,z,\epsilon)=\sum_{(s,\kappa_0,\kappa_1)\in\mathcal{S}}b^{(i)}_{s\kappa_0\kappa_1}(z,\epsilon)t^s(\partial_t^{\kappa_0}\partial_z^{\kappa_1}X_i)(t,z,\epsilon)
\end{equation}
under the initial conditions
\begin{equation}\label{e395}
(\partial_z^{j}(X_i))(t,0,\epsilon)=\phi_{i,j}(t,\epsilon),\quad 0\le j\le S-1,
\end{equation}
where the functions $\phi_{i,j}$ are constructed in the following manner.
For $0\le i\le \nu-1$ and $0\le j\le S-1$, we consider a function defined in $\Omega_i\times\mathcal{E}_i$, $(\tau,\epsilon)\mapsto W_{i,j}(\tau,\epsilon)$, such that:
\begin{enumerate}
\item[a)] The function $W_{i,j}$ is holomorphic in $\Omega_i\times \mathcal{E}_{i}$.
\item[b)] For every $\epsilon\in\mathcal{E}_i$, the function $\tau\mapsto W_{i,j}(\tau,\epsilon)$ belongs to the space $F_{j,\epsilon,\Omega_i}$, and
\begin{equation}\label{e325}
\sup_{\epsilon\in\mathcal{E}_i}\left\|W_{i,j}(\tau,\epsilon)\right\|_{j,\epsilon,\Omega_i}<\infty.
\end{equation}
\item[c)]  There exists $K>0$ such that
\begin{equation}\label{e442}
\sup_{\epsilon\in\mathcal{E}_{i}\cap\mathcal{E}_{i+1}}\frac{\left\|W_{i+1,j}(\tau,\epsilon)- W_{i,j}(\tau,\epsilon)\right\|_{j,\epsilon,\Omega_{i}\cap\Omega_{i+1}}}{h_{\mathbb{M}}(K|\epsilon|)} <\infty.
\end{equation}
\end{enumerate}
Let $\gamma_i\in[0,2\pi)$ such that $L_{\gamma_i}:=\mathbb{R}_{+}e^{\gamma_i\sqrt{-1}}\subseteq S_{d_i}\cup\{0\}$. We define
$$Y_{i,j}(T,\epsilon):=\mathcal{L}_{m_{k}}^{d_i}(W_{i,j}(\tau,\epsilon))(T).$$
This definition makes sense from the growth bounds on the function $\tau\mapsto W_{i,j}(\tau,\epsilon)$  given in (\ref{e325}), as it can be checked from Definition~\ref{defi74} and Definition~\ref{defi2}. The function $T\mapsto Y_{i,j}(T,\epsilon)$ turns out to be holomorphic for all $T=|T|e^{i\theta}$ such that $\cos(k(\gamma_i-\theta))\ge \Delta$, for some $\Delta>0$, and $|T|\le |\epsilon|^{r}\frac{\Delta^{1/k}}{(\sigma \xi(b))^{1/k}}$, where $\xi(b)=\sum_{n\ge0}\frac{1}{(n+1)^b}$. Indeed, bearing in mind Theorem~\ref{teo280}, the function $(T,z)\mapsto Y_{i}(T,z,\epsilon)$ defined as in (\ref{e282}) by
$$
Y_i(T,z,\epsilon)=\sum_{j\ge 0}Y_{i,j}(T,\epsilon)\frac{z^{j}}{j!}
$$
is a solution of (\ref{e257}), (\ref{e258}) in $S_{d_i,\theta,|\epsilon|^{r}\left(\frac{\Delta}{\sigma \xi(b)}\right)^{1/k}}\times D(0,\frac{1}{Z_0})$.

Finally, we put
$$\phi_{i,j}(t,\epsilon)=Y_{i,j}(\epsilon^rt,\epsilon):=k\int_{L_{\gamma_{i}}}W_{i,j}(u,\epsilon)e^{-\left(\frac{u}{\epsilon^r t}\right)^{k}}\frac{du}{u},$$
for every $(t,\epsilon)\in \mathcal{T}\times \mathcal{E}_{i}$. From b), $\phi_{i,j}$ is well defined and from a), $\phi_{i,j}(t,\epsilon)$ turns out to be a holomorphic function in $\mathcal{T}\times \mathcal{E}_{i}$.

We are in a position to construct the solution of the main problem.

\begin{theo}\label{teosol}
Let the initial data (\ref{e395}) be constructed as before. Under assumptions (A) and (B), the problem (\ref{e394}), (\ref{e395}) has a holomorphic and bounded solution $X_{i}(t,z,\epsilon)$ on $(\mathcal{T}\cap D(0,h'))\times D(0,R_0)\times \mathcal{E}_{i}$, for every $0\le i \le \nu-1$, for some $R_0,h'>0$.
\end{theo}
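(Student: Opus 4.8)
The plan is to reduce the singularly perturbed problem (\ref{e394}), (\ref{e395}) to the already solved problem (\ref{e257}), (\ref{e258}) of Theorem~\ref{teo280} by means of the scaling $T=\epsilon^r t$, and then to read off holomorphy, the prescribed initial data, and boundedness directly from the Laplace integral representation (\ref{e282}). I would first record the effect of this substitution (with $\epsilon\in\mathcal{E}_i$ fixed and the single-valued branch of $\epsilon^r$ chosen on the sector $\mathcal{E}_i$) on the operators involved. Since $r=r_1/(s_1k)$ by (\ref{e111}), one has $t^{k+1}\partial_t=\epsilon^{-rk}T^{k+1}\partial_T$, whence $\epsilon^{r_1}(t^{k+1}\partial_t)^{s_1}=(T^{k+1}\partial_T)^{s_1}$ because $rks_1=r_1$; likewise $t^s\partial_t^{\kappa_0}=\epsilon^{-r(s-\kappa_0)}T^s\partial_T^{\kappa_0}$. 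Thus, if $Y_i(T,z,\epsilon)$ solves (\ref{e257}) with coefficients $b^{(i)}_{s\kappa_0\kappa_1}$, then $X_i(t,z,\epsilon):=Y_i(\epsilon^r t,z,\epsilon)$ solves (\ref{e394}), and its holomorphy follows from that of $Y_i$ composed with the holomorphic map $(t,\epsilon)\mapsto(\epsilon^r t,\epsilon)$.

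The function $Y_i$ is exactly the one produced in the discussion preceding the statement: the hypotheses needed to invoke Proposition~\ref{prop150} and then Theorem~\ref{teo280} with $\Omega=\Omega_i$, $\mathcal{E}=\mathcal{E}_i$ and coefficients $b^{(i)}_{s\kappa_0\kappa_1}$ are all at hand. Indeed, Assumptions (A) and (B) hold, the uniform bound (\ref{e325}) plays the role of (\ref{e322}), and the coefficient estimates (\ref{e443}) supply the geometric majorants $C_5^{\beta}\beta!$ used in the fixed point argument. This yields $W_i(\tau,z,\epsilon)=\sum_{\beta\ge0}W_{i,\beta}(\tau,\epsilon)z^{\beta}/\beta!$ with $\|W_{i,\beta}\|_{\beta,\epsilon,\Omega_i}\le Z_1Z_0^{\beta}\beta!$ as in (\ref{e194}), and Theorem~\ref{teo280} gives $Y_i(T,z,\epsilon)=\sum_{\beta\ge0}\mathcal{L}^{d_i}_{m_k}(W_{i,\beta}(\tau,\epsilon))(T)\,z^{\beta}/\beta!$. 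Evaluating $z$-derivatives at $z=0$ then gives $(\partial_z^j X_i)(t,0,\epsilon)=Y_{i,j}(\epsilon^r t,\epsilon)=\phi_{i,j}(t,\epsilon)$, so the initial conditions (\ref{e395}) are automatically satisfied.

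It remains to fix the domain and prove boundedness. By property~2 of Definition~\ref{defin2}, $\epsilon^r t$ lies in $S_{d_i,\theta,\cdot}$ whenever $t\in\mathcal{T}$ and $\epsilon\in\mathcal{E}_i$, so the argument condition $|\arg(\epsilon^r t)-d_i|<\theta/2$ holds for free; I would then choose $h'<(\Delta/(\sigma\xi(b)))^{1/k}$, so that $|t|\le h'$ forces $|\epsilon^r t|=|\epsilon|^r|t|\le|\epsilon|^r(\Delta/(\sigma\xi(b)))^{1/k}$, placing $\epsilon^r t$ in the domain of $Y_i$, and take $R_0<1/Z_0$. The size of each Laplace term is read from the estimate already established in the proof of Theorem~\ref{teo280}: with $T=\epsilon^r t$ the exponent is $[\sigma\xi(b)/|\epsilon|^{rk}-\Delta/(|\epsilon|^{rk}|t|^k)]s^k$, negative with rate $c\ge c_0/|\epsilon|^{rk}$ where $c_0:=\Delta/h'^k-\sigma\xi(b)>0$; hence $\int_0^{\infty}e^{-cs^k}\,ds=\tfrac{\Gamma(1/k)}{k}c^{-1/k}\le \tfrac{\Gamma(1/k)}{k}c_0^{-1/k}|\epsilon|^r$. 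Summing the majorant $Z_1Z_0^{\beta}\beta!\cdot\Gamma(1/k)c_0^{-1/k}|\epsilon|^r\,|z|^{\beta}/\beta!$ over $\beta$ converges for $|z|<R_0<1/Z_0$, yielding a bound uniform on $(\mathcal{T}\cap D(0,h'))\times D(0,R_0)\times\mathcal{E}_i$.

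I expect the only genuinely delicate point to be the uniform control as $\epsilon\to0$: there the decay rate $c$ in the Laplace integral blows up like $|\epsilon|^{-rk}$, but the resulting prefactor $c^{-1/k}\sim|\epsilon|^r$ compensates exactly, so the scaling $T=\epsilon^r t$ is precisely what keeps $X_i$ bounded (indeed flat) near $\epsilon=0$ and makes the radius of the admissible sector shrink in step with $|\epsilon|^r$. Once the correspondence (\ref{e394})$\leftrightarrow$(\ref{e257}) is set up, the remaining verifications are routine applications of Proposition~\ref{prop150}, Theorem~\ref{teo280}, and the growth bounds of Definition~\ref{defi2}.
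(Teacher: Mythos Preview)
Your proposal is correct and follows essentially the same approach as the paper: define $X_i(t,z,\epsilon):=Y_i(\epsilon^r t,z,\epsilon)$ and invoke Theorem~\ref{teo280}. You supply more detail than the paper does---the explicit computation showing that the scaling $T=\epsilon^r t$ carries (\ref{e394}) to (\ref{e257}), the choice of $h'<(\Delta/(\sigma\xi(b)))^{1/k}$, and the uniform boundedness estimate---whereas the paper's proof is a terse two sentences that leaves these verifications to the reader.
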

\begin{proof}
Regarding the construction of the initial conditions and Theorem~\ref{teo280}, one has that $(T,z)\mapsto Y_{i}(T,z,\epsilon)$ is a solution of (\ref{e257}), (\ref{e258}) in $S_{d_i,\theta,|\epsilon|^{r}\left(\frac{\Delta}{\sigma \xi(b)}\right)^{1/k}}\times D(0,\frac{1}{Z_0})$. One concludes the result by defining
\begin{equation}\label{e437}
X_{i}(t,z,\epsilon):=Y_i(\epsilon^r t,z,\epsilon)=\sum_{\beta\ge 0}k\int_{L_{\gamma_{i}}}W_{i,\beta}(u,\epsilon)e^{-\left(\frac{u}{t\epsilon^r}\right)^{k}}\frac{du}{u}\frac{z^{\beta}}{\beta!},
\end{equation}
and taking into account the properties of Laplace transform described in Proposition~\ref{prop3}.
The elements of the sequence $(W_{i,\beta})_{\beta\ge0}$ are the coefficients of (\ref{e139}), determined by the recursion (\ref{e145}), formal solution of the problem (\ref{e127}), (\ref{e128}).
\end{proof}

Next we study the rate of growth of the differences between the coefficients of like powers in the formal solution obtained in Proposition~\ref{prop150} for the problem (\ref{e127}), (\ref{e128}), when considered in two adjacent sectors of the good covering.

\begin{prop}\label{prop200}
Let $0\le i\le \nu-1$. In the situation described in the present section, and under Assumptions (A) and (B), 
there exist $\tilde{c}_0,\tilde{c}_1,\tilde{K}_2>0$ such that
for every $\beta\ge0$ and $\epsilon\in\mathcal{E}_{i}\cap\mathcal{E}_{i+1}$ one has
\begin{equation}\label{e494}
\left\|W_{i+1,\beta}(\tau,\epsilon)- W_{i,\beta}(\tau,\epsilon)\right\|_{\beta,\epsilon,\Omega_{i}\cap\Omega_{i+1}}\le \tilde{c}_1\tilde{c}_0^\beta \beta! h_{\mathbb{M}}\left(\tilde{K}_2|\epsilon|\right).
\end{equation}
\end{prop}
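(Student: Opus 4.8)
The plan is to mimic the majorant-series argument from Proposition~\ref{prop150}, but applied to the \emph{differences} of the coefficients rather than the coefficients themselves. Write $\Delta_\beta(\tau,\epsilon):=W_{i+1,\beta}(\tau,\epsilon)-W_{i,\beta}(\tau,\epsilon)$ and set $v_\beta(\epsilon):=\left\|\Delta_\beta(\tau,\epsilon)\right\|_{\beta,\epsilon,\Omega_i\cap\Omega_{i+1}}$. Both $(W_{i,\beta})_\beta$ and $(W_{i+1,\beta})_\beta$ satisfy the recursion (\ref{e145}) on the respective domains, with the same operators but with coefficients $b^{(i)}_{s\kappa_0\kappa_1\alpha_0}$ and $b^{(i+1)}_{s\kappa_0\kappa_1\alpha_0}$. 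Subtracting the two recursions and inserting the algebraic identity $b^{(i+1)}X' - b^{(i)}X = b^{(i+1)}(X'-X) + (b^{(i+1)}-b^{(i)})X$ splits each summand into a term carrying a difference $\Delta_{\alpha_1+\kappa_1}$ of solution coefficients and a term carrying a difference $b^{(i+1)}_{\cdots}-b^{(i)}_{\cdots}$ of structure coefficients. First I would take $\left\|\cdot\right\|_{\beta+S,\epsilon,\Omega_i\cap\Omega_{i+1}}$ norms and apply Lemma~\ref{lema1} and Proposition~\ref{prop1} exactly as in Proposition~\ref{prop150}, together with the bound (\ref{e443}) on $b^{(i+1)}$ for the first family of terms, and the flatness bound (\ref{e444}) together with the a~priori bound (\ref{e194}) on $W_{i,\alpha_1+\kappa_1}$ (which gives $w_{\alpha_1+\kappa_1}\le Z_1Z_0^{\alpha_1+\kappa_1}(\alpha_1+\kappa_1)!$) for the second family.

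The key observation that makes the factor $h_{\mathbb{M}}(\tilde K_2|\epsilon|)$ propagate through the recursion is that $h_{\mathbb{M}}$ is non-decreasing, so for any fixed constants the inhomogeneous contribution coming from (\ref{e444}) is already controlled by a single factor $h_{\mathbb{M}}(c_3|\epsilon|)$, and this factor can be pulled out of the whole recursion. Concretely, I would show that $v_\beta(\epsilon)\le h_{\mathbb{M}}(\tilde K_2|\epsilon|)\,\tilde v_\beta$ where $(\tilde v_\beta)_\beta$ satisfies a majorant recursion of exactly the same shape as the one governing $(u_\beta)$ in Proposition~\ref{prop150}. The homogeneous part of the recursion for $v_\beta$ (the terms involving $\Delta_{\alpha_1+\kappa_1}$) has the same combinatorial kernel as (\ref{e174}), while the source terms (those involving the coefficient differences) are, after factoring out $h_{\mathbb{M}}(c_3|\epsilon|)$, bounded by a convergent geometric-factorial expression in $\beta$. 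Thus the same Cauchy-problem comparison $\partial_x^S u = \cdots$ used before, now with a nonzero forcing built from the $B_{s\kappa_0\kappa_1\beta}$ and the $Z_0,Z_1$ constants, yields a convergent majorant series $\sum \tilde v_\beta Z_0^{-\beta}/\beta! < \infty$, whence $\tilde v_\beta\le \tilde c_1\tilde c_0^\beta\beta!$ and the claimed estimate (\ref{e494}) follows.

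The main obstacle will be the bookkeeping of the domains: the two families $W_{i,\beta}$ and $W_{i+1,\beta}$ are a priori defined on $\Omega_i$ and $\Omega_{i+1}$ respectively, and the difference only makes sense on $\Omega_i\cap\Omega_{i+1}$, so I must first verify that the recursion (\ref{e145}), Lemma~\ref{lema1}, Lemma~\ref{lema2} and Proposition~\ref{prop1} all remain valid when the underlying sector is replaced by the intersection $\Omega_i\cap\Omega_{i+1}$. Since Assumption (A) is imposed uniformly (the forbidden directions are excluded for all the $S_{d_i}$ simultaneously through condition~1 of Definition~\ref{defin2}), the constants $C_1, C_2$ do not depend on $i$, and restricting a holomorphic function to a subdomain only decreases the sup-norm, so all the estimates carry over with the same constants. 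The second delicate point is checking that the two contributions (from $\Delta_{\alpha_1+\kappa_1}$ and from the coefficient differences) can be absorbed into a single majorant problem with a common constant $\tilde K_2$; this is handled by taking $\tilde K_2:=c_3$ and enlarging $\tilde c_0, \tilde c_1$ to dominate both the homogeneous propagation and the forcing, exactly as the classical theory of ODEs guarantees convergence of the majorant in a neighborhood of the origin uniformly in $\epsilon\in\mathcal{E}_i\cap\mathcal{E}_{i+1}$.
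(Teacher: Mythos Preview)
Your approach is essentially the paper's own: subtract the two recursions, split via the telescoping identity, bound with Lemma~\ref{lema1} and Proposition~\ref{prop1}, factor out $h_{\mathbb{M}}$, and feed the resulting scalar recursion into a majorant Cauchy problem with a forcing term. The paper uses the mirror splitting $b^{(i+1)}X'-b^{(i)}X=(b^{(i+1)}-b^{(i)})X'+b^{(i)}(X'-X)$ instead of yours, but this is immaterial since (\ref{e194}) and (\ref{e443}) hold for both families.

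One small correction: your choice $\tilde K_2:=c_3$ is not quite enough. The seed values $v_0(\epsilon),\ldots,v_{S-1}(\epsilon)$ come from the hypothesis (\ref{e442}) on the initial data, which carries its own constant $K$, and if $K>c_3$ the ratio $h_{\mathbb{M}}(K|\epsilon|)/h_{\mathbb{M}}(c_3|\epsilon|)$ need not stay bounded as $\epsilon\to 0$ (recall $h_{\mathbb{M}}$ is non-decreasing and vanishes at $0$). You must take $\tilde K_2:=\max(K,c_3)$ so that both the initial differences and the inhomogeneous source from (\ref{e444}) are dominated by the \emph{same} $h_{\mathbb{M}}(\tilde K_2|\epsilon|)$; only then can you define $\tilde v_\beta:=\sup_{\epsilon}v_\beta(\epsilon)/h_{\mathbb{M}}(\tilde K_2|\epsilon|)$ with finite initial values and run the majorant ODE argument uniformly in $\epsilon$. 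This is exactly how the paper proceeds.
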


\begin{proof}
For every $\beta\ge 0$, the recursion formula (\ref{e145}) allows us to write  $\frac{W_{i+1,\beta+S}(\tau,\epsilon)-W_{i,\beta+S}(\tau,\epsilon)}{\beta!}$ in the following way:
\begin{align*}
&\frac{1}{(k\tau^k)^{s_1}+a}\sum_{(s,\kappa_0,\kappa_1)\in\mathcal{S}} \sum_{\alpha_0+\alpha_1=\beta} \left(\frac{b^{(i+1)}_{s\kappa_0\kappa_1\alpha_0}(\epsilon) -b^{(i)}_{s\kappa_0\kappa_1\alpha_0}(\epsilon)}{\alpha_0!} \epsilon^{-r(s-\kappa_0)}\right.\\
&\times \left[\frac{\tau^{k}}{\Gamma\left(\frac{\delta_{\kappa_0}}{k}\right)} \int_{0}^{\tau^k}(\tau^k-\sigma)^{\frac{\delta_{\kappa_0}}{k}-1} (k\sigma)^{\kappa_0}\frac{W_{i+1,\alpha_1+\kappa_1}(\sigma^{1/k},\epsilon)}{\alpha_1!} \frac{d\sigma}{\sigma} \right.\\
&\left.+\sum_{1\le p\le \kappa_0-1}A_{\kappa_0,p}\frac{\tau^k}{\Gamma\left(\frac{\delta_{\kappa_0}+k(\kappa_0-p)}{k} \right)}\int_{0}^{\tau^k}(\tau^k-\sigma)^{\frac{\delta_{\kappa_0}+k(\kappa_0-p)}{k}-1}(k\sigma)^{p}\frac{W_{i+1,\alpha_1+\kappa_1}(\sigma^{1/k},\epsilon)}{\alpha_1!}\frac{d\sigma}{\sigma}\right]\\
&+\left[\frac{\tau^{k}}{\Gamma\left(\frac{\delta_{\kappa_0}}{k}\right)} \int_{0}^{\tau^k}(\tau^k-\sigma)^{\frac{\delta_{\kappa_0}}{k}-1} (k\sigma)^{\kappa_0}\frac{(W_{i+1,\alpha_1+\kappa_1}(\sigma^{1/k},\epsilon)- W_{i,\alpha_1+\kappa_1}(\sigma^{1/k},\epsilon))}{\alpha_1!}\frac{d\sigma}{\sigma} \right.\\
& + \sum_{1\le p\le \kappa_0-1}A_{\kappa_0,p} \frac{\tau^k}{\Gamma\left(\frac{\delta_{\kappa_0}+k(\kappa_0-p)}{k}\right)} \int_{0}^{\tau^k}(\tau^k-\sigma)^{\frac{\delta_{\kappa_0}+k(\kappa_0-p)}{k}-1} (k\sigma)^{p}\\
&\left.\left.\frac{(W_{i+1,\alpha_1+\kappa_1}(\sigma^{1/k},\epsilon)- W_{i,\alpha_1+\kappa_1}(\sigma^{1/k},\epsilon))}{\alpha_1!}\frac{d\sigma}{\sigma}\right] \times\frac{b^{(i)}_{s\kappa_0\kappa_1\alpha_0}(\epsilon)}{\alpha_0!}\epsilon^{-r(s-\kappa_0)}\right),
\end{align*}
for $\epsilon\in\mathcal{E}:=\mathcal{E}_{i}\cap\mathcal{E}_{i+1}$ and $\tau\in\Omega:=\Omega_i\cap\Omega_{i+1}$.

Let $\epsilon\in\mathcal{E}$. Taking norms $\left\|\cdot\right\|_{\beta+S,\epsilon,\Omega}$ in the previous expression and applying Lemma~\ref{lema1}, Lemma~\ref{lema2} and Proposition~\ref{prop1}, one arrives at
$$\frac{\left\|W_{i+1,\beta+S}(\tau,\epsilon)- W_{i,\beta+S}(\tau,\epsilon)\right\|_{\beta+S,\epsilon,\Omega}}{\beta!}$$
\begin{align*}
&\le C_1\sum_{(s,\kappa_0,\kappa_1)\in\mathcal{S}}\sum_{\alpha_0+\alpha_1=\beta} \left(\frac{|b_{s\kappa_0\kappa_1\alpha_0}^{(i+1)}(\epsilon)- b_{s\kappa_0\kappa_1\alpha_0}^{(i)}(\epsilon)|}{\alpha_0!} |\epsilon|^{-r(s-\kappa_0)} \frac{\left\|W_{i+1,\alpha_1+\kappa_1}(s^{1/k},\epsilon)\right\|_{\alpha_1+\kappa_1,\epsilon,\Omega}}{\alpha_1!}
\right.\\
&\times |\epsilon|^{rk(\frac{\delta_{\kappa_0}}{k}+\kappa_0)} \left(\frac{(\beta+S+1)^b}{\beta+S-\alpha_1-\kappa_1}\right)^{\frac{\delta_{\kappa_0}}{k}+\kappa_0+1} \left[ \frac{C_2k^{\kappa_0}}{\Gamma\left(\frac{\delta_{\kappa_0}}{k}\right)}+ \sum_{1\le p\le \kappa_0-1}|A_{\kappa_0,p}| \frac{C_2k^p}{\Gamma\left(\frac{\delta_{\kappa_0}+k(\kappa_0-p)}{k}\right)}\right]\\
&+|\epsilon|^{rk(\frac{\delta_{\kappa_0}}{k}+\kappa_0)}\times \frac{\left\|W_{i+1,\alpha_1+\kappa_1}(s^{1/k},\epsilon)- W_{i,\alpha_1+\kappa_1}(s^{1/k},\epsilon)\right\|_{\alpha_1+\kappa_1,\epsilon,\Omega}}{\alpha_1!}\\
&\left.\times \left[ \frac{C_2k^{\kappa_0}}{\Gamma\left(\frac{\delta_{\kappa_0}}{k}\right)}+ \sum_{1\le p \le \kappa_0-1}|A_{\kappa_0,p}| \frac{C_2k^p}{\Gamma\left(\frac{\delta_{\kappa_0}+k(\kappa_0-p)}{k}\right)}\right] \left(\frac{(\beta+S+1)^b}{\beta+S-\alpha_1-\kappa_1}\right)^{\frac{\delta_{\kappa_0}}{k}+\kappa_0+1} \frac{|b^{(i)}_{s\kappa_0\kappa_1\alpha_0}(\epsilon)|}{\alpha_0!} |\epsilon|^{-r(s-\kappa_0)}\right)
\end{align*}
for suitable $C_1,C_2>0$.
For the sake of brevity, we put
$$D_{\kappa_0}:=\frac{k^{\kappa_0}}{\Gamma\left(\frac{\delta_{\kappa_0}}{k} \right)}+ \sum_{1\le p \le \kappa_0-1}|A_{\kappa_0,p}| \frac{k^p}{\Gamma\left(\frac{\delta_{\kappa_0}+k(\kappa_0-p)}{k}\right)},$$
and $\eta_{\kappa_0}:=\lfloor b(\frac{\delta_{\kappa_0}}{k}+\kappa_0+1)\rfloor-1$. Analogous estimates as in the proof of Proposition~\ref{prop150} yield
$$\frac{\left\|W_{i+1,\beta+S}(\tau,\epsilon)- W_{i,\beta+S}(\tau,\epsilon)\right\|_{\beta+S,\epsilon,\Omega}}{\beta!}$$
\begin{align*}
&\le C_1C_2C_3C_4\sum_{(s,\kappa_0\kappa_1)\in\mathcal{S}} \sum_{\alpha_0+\alpha_1=\beta}D_{\kappa_0}\frac{\beta!}{(\beta-\eta_{\kappa_0})!}\\ &\times\left[\frac{|b_{s\kappa_0\kappa_1\alpha_0}^{(i+1)}(\epsilon)- b_{s\kappa_0\kappa_1\alpha_0}^{(i)}(\epsilon)|}{\alpha_0!} \frac{\left\| W_{i+1,\alpha_1+\kappa_1}(s^{1/k},\epsilon) \right\|_{\alpha_1+\kappa_1,\epsilon,\Omega}}{\alpha_1!} \right.\\
&\left.+ \frac{\left\|W_{i+1,\alpha_1+\kappa_1}(s^{1/k},\epsilon)- W_{i,\alpha_1+\kappa_1}(s^{1/k},\epsilon) \right\|_{\alpha_1+\kappa_1,\epsilon,\Omega}}{\alpha_1!} \frac{|b^{(i)}_{s\kappa_0\kappa_1\alpha_0}(\epsilon)|}{\alpha_0!}    \right],
\end{align*}
for some $C_3,C_4>0$.

We define $w_{\beta}(\epsilon):=\left\|W_{i+1,\beta}(\tau,\epsilon)- W_{i,\beta}(\tau,\epsilon)\right\|_{\beta,\epsilon,\Omega}$ for every $\beta\ge 0$. By taking into account (\ref{e194}), (\ref{e443}) and (\ref{e444}) the previous expression allows us to write
\begin{align*}
\frac{w_{\beta+S}(\epsilon)}{\beta!}&\le
C_1C_2C_3C_4\sum_{(s,\kappa_0\kappa_1)\in\mathcal{S}} \sum_{\alpha_0+\alpha_1=\beta}D_{\kappa_0}\frac{\beta!}{(\beta-\eta_{\kappa_0})!}\\ &\times\left[c_1c_2^{\alpha_0}h_{\mathbb{M}}(c_3|\epsilon|) \frac{Z_1Z_0^{\alpha_1+\kappa_1}(\alpha_1+\kappa_1)!}{\alpha_1!}
+ \frac{w_{\alpha_1+\kappa_1}(\epsilon)}{\alpha_1!} c_1c_2^{\alpha_0}    \right].
\end{align*}
Finally, let us put for every $\beta\ge 0$
$$
v_{\beta}:=\sup_{\epsilon\in\mathcal{E}_{i}\cap\mathcal{E}_{i+1}}
\frac{w_{\beta}(\epsilon)}{h_{\mathbb{M}}\left(\tilde{K}_2|\epsilon|\right)},
$$
where $\tilde{K}_2>0$ is the maximum of the constants $K$ and $c_3$ appearing in the estimates (\ref{e442}) and (\ref{e444}), respectively. Taking into account that $t\mapsto h_{\mathbb{M}}(t)$ is increasing, the previous inequalities imply that $v_{\beta}$ is finite for every $\beta$, and moreover
\begin{align}\label{e465}
\frac{v_{\beta+S}}{\beta!}&\le
C_1C_2C_3C_4\sum_{(s,\kappa_0\kappa_1)\in\mathcal{S}} \sum_{\alpha_0+\alpha_1=\beta}D_{\kappa_0}\frac{\beta!}{(\beta-\eta_{\kappa_0})!} c_1c_2^{\alpha_0}
\left[ \frac{Z_1Z_0^{\alpha_1+\kappa_1}(\alpha_1+\kappa_1)!}{\alpha_1!} + \frac{v_{\alpha_1+\kappa_1}}{\alpha_1!}     \right].
\end{align}
Let $$b(x)=\frac{c_1}{1-c_2x}=c_1\sum_{\alpha\ge0}c_2^{\alpha}x^{\alpha},\qquad
c(x)=\frac{Z_1}{1-Z_0x}=Z_1\sum_{\alpha\ge0}Z_0^{\alpha}x^{\alpha},$$
which are both holomorphic functions in some neighborhood of the origin. We consider the Cauchy problem
\begin{equation}\label{e470}
u^{(S)}(x)=C_1C_2C_3C_4\sum_{(s,\kappa_0,\kappa_1)\in\mathcal{S}}
D_{\kappa_0}x^{\eta_{\kappa_0}}\left( b(x)\big(u(x)+c(x)\big)^{(\kappa_1)}\right)^{(\eta_{\kappa_0})},
\end{equation}
with initial conditions
\begin{equation}\label{e471}
u^{(j)}(0)=v_{j},\quad 0\le j\le S-1.
\end{equation}
One may check that the problem (\ref{e470}), (\ref{e471}) has a unique formal solution
$$u(x)=\sum_{\beta\ge0}u_{\beta}\frac{x^\beta}{\beta!}\in\mathbb{R}[[x]],$$
whose coefficients satisfy the recursion formula
\begin{equation}\label{e480}
\frac{u_{\beta+S}}{\beta!}=
C_1C_2C_3C_4\sum_{(s,\kappa_0\kappa_1)\in\mathcal{S}} \sum_{\alpha_0+\alpha_1=\beta}D_{\kappa_0}\frac{\beta!}{(\beta-\eta_{\kappa_0})!} c_1c_2^{\alpha_0}\left[ \frac{Z_1Z_0^{\alpha_1+\kappa_1}(\alpha_1+\kappa_1)!}{\alpha_1!} + \frac{u_{\alpha_1+\kappa_1}}{\alpha_1!}     \right].
\end{equation}
for every $\beta\ge0$. It is clear from the initial conditions that $u_j=v_j$ for $0\le j \le S-1$, and one easily concludes from (\ref{e465}) and (\ref{e480}) that $v_{\beta}\le u_{\beta}$ for every $\beta\ge0$.
The classical theory of existence of solutions of ODEs can be applied to the problem (\ref{e470}), (\ref{e471}) to guarantee that the formal power series $u(x)$ is convergent in a neighborhood of the origin, so there exist $\tilde{c}_0,\tilde{c}_1>0$ such that
$u_{\beta}<\tilde{c}_1\tilde{c}_0^\beta \beta!$ for every $\beta\ge0$, and hence
\begin{equation*}
\left\|W_{i+1,\beta}(\tau,\epsilon)- W_{i,\beta}(\tau,\epsilon)\right\|_{\beta,\epsilon,\Omega}\le \tilde{c}_1\tilde{c}_0^\beta \beta! h_{\mathbb{M}}\left(\tilde{K}_2|\epsilon|\right),
\end{equation*}
for all $\epsilon\in\mathcal{E}$ and $\tau\in\Omega$, as desired.
\end{proof}

The next two results, crucial on determining the asymptotic behavior of the solution, provide the key to apply a novel version of the so called Ramis-Sibuya theorem. The geometry of the problem gives rise to different asymptotic behaviors we are about to deal with in different situations.

The first case to consider is that in which no singularities fall in between two different integration directions $L_{\gamma_i}$ and $L_{\gamma_{i+1}}$ for some $0\le i\le \nu-1$, and they can be chosen to coincide. At this point, the difference of two actual solutions is an $\mathbb{M}$-flat function.


\begin{theo}\label{teo445}
Let $0\le i\le \nu-1$. In the situation described in the present section, we make Assumptions (A) and (B), and also assume that there are no singular directions $\frac{\pi(2j+1)+\arg(a)}{ks_1}$ for $j=0,\ldots,ks_1-1$ in between $\gamma_i$ and $\gamma_{i+1}$.  Then, there exist $\tilde{K}_1,\tilde{K}_2>0$ such that
\begin{equation*}
|X_{i+1}(t,z,\epsilon)-X_{i}(t,z,\epsilon)|\le \tilde{K}_1h_{\mathbb{M}}\left(\tilde{K}_2|\epsilon|\right),
\end{equation*}
for every $t\in\mathcal{T}\cap D(0,h')$, $\epsilon\in\mathcal{E}_{i}\cap\mathcal{E}_{i+1}$ and all $z\in D(0,\tilde{R}_0)$, for some $\tilde{R}_0>0$.
\end{theo}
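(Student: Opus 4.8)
The plan is to argue coefficient by coefficient in the series representation (\ref{e437}) and to reduce the entire difference to a single Laplace integral of $W_{i+1,\beta}-W_{i,\beta}$, to which Proposition~\ref{prop200} applies directly. Writing $T=\epsilon^r t$, the $\beta$-th coefficient of $X_{i+1}-X_i$ is
$$k\int_{L_{\gamma_{i+1}}}W_{i+1,\beta}(u,\epsilon)e^{-(u/T)^k}\frac{du}{u}-k\int_{L_{\gamma_{i}}}W_{i,\beta}(u,\epsilon)e^{-(u/T)^k}\frac{du}{u}.$$
The whole estimate will then follow by summing the resulting bounds against $z^\beta/\beta!$.

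The first step is to place both integrals on one ray. Since $\mathcal{E}_i\cap\mathcal{E}_{i+1}\neq\emptyset$, the associated sectors $S_{d_i}$ and $S_{d_{i+1}}$ overlap, so I would fix a direction $\gamma$ with $L_\gamma\subseteq(S_{d_i}\cap S_{d_{i+1}})\cup\{0\}\subseteq\Omega_i\cap\Omega_{i+1}$. Because, by hypothesis, no singular direction $\frac{\pi(2j+1)+\arg(a)}{ks_1}$ lies between $\gamma_i$ and $\gamma_{i+1}$, and each $W_{i,\beta}$ (resp. $W_{i+1,\beta}$) is holomorphic on all of $\Omega_i$ (resp. $\Omega_{i+1}$), Cauchy's theorem lets me rotate $L_{\gamma_i}$ (resp. $L_{\gamma_{i+1}}$) onto $L_\gamma$; the closing arcs at infinity vanish thanks to $|e^{-(u/T)^k}|\le e^{-(|u|/|T|)^k\cos(k(\gamma'-\arg T))}$, whose exponent stays negative along the rotation by the choice of opening $\theta$ and of $\Delta$ in Definition~\ref{defin2}. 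After this deformation the two integrals share the ray $L_\gamma$, so the $\beta$-th coefficient becomes
$$k\int_{L_\gamma}\bigl(W_{i+1,\beta}(u,\epsilon)-W_{i,\beta}(u,\epsilon)\bigr)e^{-(u/T)^k}\frac{du}{u}.$$

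Next I would estimate this single integral. From the definition of $\|\cdot\|_{\beta,\epsilon,\Omega_i\cap\Omega_{i+1}}$ one has, for $u=se^{i\gamma}$,
$$|W_{i+1,\beta}(u,\epsilon)-W_{i,\beta}(u,\epsilon)|\le \|W_{i+1,\beta}-W_{i,\beta}\|_{\beta,\epsilon,\Omega_i\cap\Omega_{i+1}}\,\frac{|u/\epsilon^r|}{1+|u/\epsilon^r|^{2k}}\exp\bigl(\sigma r_b(\beta)|u/\epsilon^r|^k\bigr),$$
and the first factor is bounded by $\tilde{c}_1\tilde{c}_0^{\beta}\beta!\,h_{\mathbb{M}}(\tilde{K}_2|\epsilon|)$ by Proposition~\ref{prop200}. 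Substituting $v=|u|/|\epsilon|^r$, using $|u|/|T|=v/|t|$ and $|e^{-(u/T)^k}|\le e^{-(v/|t|)^k\Delta}$, the remaining integral reduces to
$$\int_0^\infty\frac{1}{1+v^{2k}}\exp\Bigl(\bigl(\sigma r_b(\beta)-\Delta/|t|^k\bigr)v^k\Bigr)\,dv.$$
Since $b>1$ gives $r_b(\beta)\le\xi(b)<\infty$ uniformly in $\beta$, restricting $t$ to $\mathcal{T}\cap D(0,h')$ with $(h')^k<\Delta/(\sigma\xi(b))$ forces the exponent below $-\bigl(\Delta/(h')^k-\sigma\xi(b)\bigr)v^k$, so the integral is at most a finite constant $I_0$ independent of $\beta$ and $\epsilon$. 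Thus the $\beta$-th coefficient is bounded by $kI_0\tilde{c}_1\tilde{c}_0^{\beta}\beta!\,h_{\mathbb{M}}(\tilde{K}_2|\epsilon|)$.

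Finally I would sum the series: multiplying by $|z|^\beta/\beta!$ cancels the $\beta!$, giving
$$|X_{i+1}(t,z,\epsilon)-X_i(t,z,\epsilon)|\le kI_0\tilde{c}_1\,h_{\mathbb{M}}(\tilde{K}_2|\epsilon|)\sum_{\beta\ge0}(\tilde{c}_0|z|)^\beta,$$
which converges for $|z|<1/\tilde{c}_0$; taking $\tilde{R}_0<1/\tilde{c}_0$ yields the claim with $\tilde{K}_1=kI_0\tilde{c}_1/(1-\tilde{c}_0\tilde{R}_0)$. The main obstacle is the first step: justifying that both Laplace integrals can be rotated onto a common ray in $\Omega_i\cap\Omega_{i+1}$ without meeting a singular direction while preserving the exponential decay on the closing arcs. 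This is exactly where the hypothesis on the absence of singular directions between $\gamma_i$ and $\gamma_{i+1}$, together with the geometric constraints of Definition~\ref{defin2}, are indispensable; once the difference is a single integral over $L_\gamma$, the remaining bounds are the routine uniform-in-$\beta$ estimates sketched above.
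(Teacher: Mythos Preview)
Your proposal is correct and follows essentially the same approach as the paper's proof: reduce both Laplace integrals to a common ray (the paper simply says $L_{\gamma_i}$ and $L_{\gamma_{i+1}}$ can be chosen to coincide, while you spell out the Cauchy deformation), apply Proposition~\ref{prop200} to bound $|W_{i+1,\beta}-W_{i,\beta}|$, estimate the remaining integral uniformly in $\beta$ via $r_b(\beta)\le\xi(b)$, and sum the resulting geometric series in $z$. Your treatment of the integral bound (the substitution $v=|u|/|\epsilon|^r$ and the choice of $h'$) is in fact slightly more explicit than the paper's.
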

\begin{proof}
As a consequence of (\ref{e494}), for every $\beta\ge 0$ one has
\begin{equation*}
|W_{i+1,\beta}(\tau,\epsilon)-W_{i,\beta}(\tau,\epsilon)|\le \tilde{c}_1\tilde{c}_0^{\beta}\beta!h_{\mathbb{M}}\left(\tilde{K}_2|\epsilon|\right) \frac{\left|\frac{\tau}{\epsilon^r}\right|}{1+\left|\frac{\tau}{\epsilon^r}\right|^{2k}} \exp\left(\sigma r_{b}(\beta+S)\left|\frac{\tau}{\epsilon^r}\right|^k\right)
\end{equation*}
for all $\epsilon\in\mathcal{E}$, $\tau\in\Omega$ (we keep the notation in the previous proof).
Under the hypothesis made on the geometry of the problem, $L_{\gamma_i}$ and $L_{\gamma_{i+1}}$ can be chosen in such a way that $L_{\gamma_i}\equiv L_{\gamma_{i+1}}$. At this point, one can estimate the difference of $X_{i}$ and $X_{i+1}$ regarding their definition in (\ref{e437}):
\begin{equation}\label{e512}
\left|X_{i+1}(t,z,\epsilon)-X_{i}(t,z,\epsilon) \right|
\end{equation}
\begin{align*}
&\le k\sum_{\beta\ge0}\int_{0}^{\infty}|W_{i+1,\beta}(|u|e^{\sqrt{-1}\gamma_{i}},\epsilon) -W_{i,\beta}(|u|e^{\sqrt{-1}\gamma_{i}},\epsilon)| e^{-\frac{|u|^{k}}{|t|^{k}|\epsilon|^{rk}} \cos\left(k(\gamma_{i}-\arg(t\epsilon^r))\right)} \frac{d|u|}{|u|}\frac{|z|^{\beta}}{\beta!}\\
&\le k\tilde{c}_1\sum_{\beta\ge0}(\tilde{c}_0|z|)^{\beta} h_{\mathbb{M}}\left(\tilde{K}_2|\epsilon|\right) \int_{0}^{\infty}\frac{\left|\frac{u}{\epsilon^r}\right|}{1+\left|\frac{u}{\epsilon^r}\right|^{2k}} \exp\left((\sigma\xi(b)-\cos(k(\gamma_{i}-\arg(t\epsilon^r)))r_{\mathcal{T}}^{-k}) \frac{|u|^{k}}{|\epsilon|^{rk}}\right)\frac{d|u|}{|u|}\\
&\le k\tilde{c}_1h_{\mathbb{M}}\left(\tilde{K}_2|\epsilon|\right)|\epsilon|^{rk} \int_{0}^{\infty}\exp\left(-C_{5}\frac{|u|^{k}}{|\epsilon|^{rk}}\right) d\left(\frac{|u|}{|\epsilon|^{rk}}\right)\sum_{\beta\ge0}(\tilde{c}_0|z|)^{\beta}\\
&\le \tilde{K}_1h_{\mathbb{M}}\left(\tilde{K}_2|\epsilon|\right)
\end{align*}
for suitable constants $C_5,\tilde{K}_1>0$, as long as $t\in\mathcal{T}\cap D(0,h')$, $\epsilon\in\mathcal{E}$ and $z\in D(0,1/(2\tilde{c}_0))$.
\end{proof}

The second situation considers the case when a singularity lies in between two directions of integration, $L_{\gamma_{i}}$ and $L_{\gamma_{i+1}}$. This situation is handled by a deformation path argument.


\begin{theo}\label{teo526}
Let $0\le i\le \nu-1$. In the situation described in the present section, we make Assumptions (A) and (B), and moreover assume now that there exists $j\in\{0,\ldots,ks_1-1\}$ such that the singular direction $\frac{\pi(2j+1)+\arg(a)}{k s_1}$ lies in between $\gamma_i$ and $\gamma_{i+1}$. Then, there exist $\tilde{K}_3,\tilde{K}_4>0$ such that
\begin{align*}
|X_{i+1}(t,z,\epsilon)-X_{i}(t,z,\epsilon)|\le \tilde{K}_{3}\exp\left(-\frac{\tilde{K}_4}{|\epsilon|^{rk}}\right),
\end{align*}
for every $\epsilon\in\mathcal{E}_{i}\cap\mathcal{E}_{i+1}$, $t\in\mathcal{T}\cap D(0,h')$, and all $z\in D(0,R'_0)$, for some $R'_0>0$.
\end{theo}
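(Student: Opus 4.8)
The plan is to argue coefficientwise in the expansion (\ref{e437}) and to exploit the fact that, although the singular direction $\frac{\pi(2j+1)+\arg(a)}{ks_1}$ prevents us from collapsing the two integration rays onto a common one (as was done in Theorem~\ref{teo445}), the obstruction it creates lives only at the fixed distance $\rho_a:=|a|^{1/(ks_1)}/k^{1/k}>\rho_0$ from the origin, hence outside the disc $D(0,\rho_0)$. Accordingly, for each $\beta\ge0$ I would split the Laplace integral defining the $\beta$-th coefficient of $X_i$ and of $X_{i+1}$ at the fixed radius $\rho_0/2$, writing the $\beta$-th coefficient of $X_{i+1}-X_i$ as a sum of a \emph{far} contribution (the rays $L_{\gamma_i}$, $L_{\gamma_{i+1}}$ truncated to $|u|\ge\rho_0/2$) and a \emph{near} contribution (the same rays truncated to $|u|\le\rho_0/2$).

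For the far contribution I would estimate the two truncated integrals separately. Using the pointwise bound coming from (\ref{e194}), namely $|W_{i,\beta}(u,\epsilon)|\le Z_1Z_0^{\beta}\beta!\,\frac{|u/\epsilon^r|}{1+|u/\epsilon^r|^{2k}}\exp(\sigma\xi(b)|u/\epsilon^r|^k)$, together with $\cos(k(\gamma_i-\arg(t\epsilon^r)))\ge\Delta$, the integrand is bounded by $Z_1Z_0^\beta\beta!\,(1+|u/\epsilon^r|^{2k})^{-1}\exp((\sigma\xi(b)-\Delta|t|^{-k})|u/\epsilon^r|^k)$. Choosing $h'>0$ so small that $\Delta/|t|^{k}\ge2\sigma\xi(b)$ for $|t|\le h'$, the substitution $v=|u|/|\epsilon|^r$ turns the integral over $|u|\ge\rho_0/2$ into $\int_{\rho_0/(2|\epsilon|^r)}^{\infty}(1+v^{2k})^{-1}e^{-\sigma\xi(b)v^k}\,dv$, which is $O(\exp(-\sigma\xi(b)(\rho_0/2)^k|\epsilon|^{-rk}))$; hence each far piece is at most $CZ_0^\beta\beta!\exp(-\tilde K_4|\epsilon|^{-rk})$.

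For the near contribution I would split once more, replacing $W_{i+1,\beta}$ by $W_{i,\beta}+(W_{i+1,\beta}-W_{i,\beta})$ on $L_{\gamma_{i+1}}$, which is legitimate since $D(0,\rho_0)\subseteq\Omega_i\cap\Omega_{i+1}$. The piece carrying $W_{i+1,\beta}-W_{i,\beta}$ is controlled by Proposition~\ref{prop200}, giving $\mathbb{M}$-flatness of size $\tilde c_1\tilde c_0^\beta\beta!\,h_{\mathbb{M}}(\tilde K_2|\epsilon|)$. The remaining piece is the difference of the integrals of the single holomorphic function $W_{i,\beta}(u,\epsilon)e^{-(u/(t\epsilon^r))^k}/u$ along the two truncated rays; since this function is holomorphic on $D(0,\rho_0)\setminus\{0\}$ (no singularity enters $D(0,\rho_0)$), Cauchy's theorem rewrites it as the integral over the circular arc $u=(\rho_0/2)e^{i\theta}$, with $\theta$ running from $\gamma_i$ to $\gamma_{i+1}$. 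On this arc $|u|=\rho_0/2$ is again bounded away from $0$, so the same exponential estimate applies, provided the opening of the covering and $h'$ are small enough that $\cos(k(\theta-\arg(t\epsilon^r)))$ stays bounded below by a positive constant along the whole arc. Note that the singularity is never approached: it merely forbids deforming the far parts through the sector, which is exactly why they survive as (exponentially small) separate terms.

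It remains to convert the $\mathbb{M}$-flat term into an exponentially flat one and to sum over $\beta$. Here the standing hypothesis $\omega(\mathbb{M})<1/(rk)$ is decisive: it yields a comparison $h_{\mathbb{M}}(\tilde K_2|\epsilon|)\le \tilde K_3'\exp(-\tilde K_4'|\epsilon|^{-rk})$ for $\epsilon$ small, the $\mathbb{M}$-flatness being stronger than Gevrey flatness of order $1/(rk)$ precisely in this regime. Thus all three groups of terms share a bound of the form $C\,(\max(Z_0,\tilde c_0))^\beta\beta!\exp(-\tilde K_4|\epsilon|^{-rk})$; multiplying by $|z|^\beta/\beta!$ and summing the resulting geometric series, convergent for $|z|<R'_0:=(2\max(Z_0,\tilde c_0))^{-1}$, produces the claimed estimate. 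The main obstacle I anticipate is geometric bookkeeping rather than hard analysis: one must choose $R'_0$, $h'$ and the opening of the good covering simultaneously so that the deformation arc stays inside $D(0,\rho_0)$ and inside the half-plane of convergence of the Laplace kernel, and so that the three exponents combine with one common positive constant $\tilde K_4$; the appeal to $\omega(\mathbb{M})<1/(rk)$ for comparing the two flatness scales is the only genuinely non-routine analytic input.
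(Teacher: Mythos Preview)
Your proposal is correct and follows essentially the same strategy as the paper's proof: split at a fixed radius, estimate the outer rays and the circular arc(s) directly via the pointwise bound~(\ref{e194}) to get the Gevrey-exponential decay, bound the inner difference piece via Proposition~\ref{prop200}, and then invoke $\omega(\mathbb{M})<1/(rk)$ to subordinate the $h_{\mathbb{M}}$-term to the same exponential scale before summing the geometric series in $z$. The only cosmetic difference is bookkeeping in the path deformation: the paper introduces the bisecting direction $\theta_{i,i+1}=(\gamma_i+\gamma_{i+1})/2$, runs the difference $W_{i+1,\beta}-W_{i,\beta}$ along that bisector (their $I_5$), and keeps two half-arcs $I_3,I_4$ carrying $W_{i+1,\beta}$ and $W_{i,\beta}$ separately, whereas you place the difference piece on the truncated ray $L_{\gamma_{i+1}}$ and collapse the remaining $W_{i,\beta}$-terms into a single arc via Cauchy's theorem; both organisations lead to the same estimates.
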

\begin{proof}
The definition of the solutions of the problem yields
$$|X_{i+1}(t,z,\epsilon)-X_{i}(t,z,\epsilon)|$$
\begin{equation*}
\le k\sum_{\beta\ge0}\left|\int_{L_{\gamma_{i+1}}}W_{i+1,\beta}(u,\epsilon)e^{-\left(\frac{u}{\epsilon^rt}\right)^k}\frac{du}{u} -\int_{L_{\gamma_{i}}}W_{i,\beta}(u,\epsilon)e^{-\left(\frac{u}{\epsilon^rt}\right)^k}\frac{du}{u}\right|\frac{|z|^{\beta}}{\beta!},
\end{equation*}
for all $\epsilon\in\mathcal{E}_{i}\cap \mathcal{E}_{i+1}$, $t\in\mathcal{T}\cap D(0,h')$ and $z\in D(0,R_0)$.

One may deform the integration path in the expression above in order to get that
\begin{equation*}
|X_{i+1}(t,z,\epsilon)-X_{i}(t,z,\epsilon)|\le k\sum_{\beta\ge0}|I_1-I_2+I_3-I_4+I_5|\frac{|z|^{\beta}}{\beta!},
\end{equation*}
where
\begin{align*}
 I_1&:= \int_{L_{\gamma_{i+1},\rho/2}} W_{i+1,\beta}(u,\epsilon) e^{-(\frac{u}{\epsilon^{r}t})^k} \frac{du}{u},\qquad
I_2:= \int_{L_{\gamma_{i},\rho/2}} W_{i,\beta}(u,\epsilon) e^{-(\frac{u}{\epsilon^{r}t})^k} \frac{du}{u},\\
I_3&:= \int_{C_{\rho/2,\theta_{i,i+1},\gamma_{i+1}}} W_{i+1,\beta}(u,\epsilon) e^{-(\frac{u}{\epsilon^{r}t})^k} \frac{du}{u},\qquad
I_4:= \int_{C_{\rho/2,\theta_{i,i+1},\gamma_{i}}} W_{i,\beta}(u,\epsilon) e^{-(\frac{u}{\epsilon^{r}t})^k} \frac{du}{u},\\
I_5&:= \int_{L_{0,\rho/2,\theta_{i,i+1}}} (W_{i+1,\beta}(u,\epsilon) - W_{i,\beta}(u,\epsilon))
e^{-(\frac{u}{\epsilon^{r}t})^k} \frac{du}{u}.
\end{align*}
Here, $L_{\gamma_{i+1},\rho/2} = [\rho/2, +\infty)e^{\sqrt{-1}\gamma_{i+1}}$,
$L_{\gamma_{i},\rho/2} = [\rho/2, +\infty)e^{\sqrt{-1}\gamma_{i}}$,
$C_{\rho/2,\theta_{i,i+1},\gamma_{i+1}}$ is an arc of circle connecting $(\rho/2)e^{\sqrt{-1}\theta_{i,i+1}}$ and
$(\rho/2)e^{\sqrt{-1}\gamma_{i+1}}$ with a well chosen orientation, where $\theta_{i,i+1}$ equals $(\gamma_{i}+\gamma_{i+1})/2$, and
$C_{\rho/2,\theta_{i,i+1},\gamma_{i}}$ is an arc of circle connecting $(\rho/2)e^{\sqrt{-1}\theta_{i,i+1}}$ and
$(\rho/2)e^{\sqrt{-1}\gamma_{i}}$ with a well chosen orientation. Finally,
$L_{0,\rho/2,\theta_{i,i+1}} = [0,\rho/2]e^{\sqrt{-1}\theta_{i,i+1}}$.

We first settle upper bounds for $|I_1|$. Bearing in mind the estimates in (\ref{e194}) we have that
\begin{equation}\label{e551}
|W_{i,\beta}(\tau,\epsilon)|\le Z_1Z_0^{\beta}\beta!\frac{\left|\frac{\tau}{\epsilon^r}\right|}{1+\left|\frac{\tau}{\epsilon^r}\right|^{2k}}\exp\left(\sigma r_{b}(\beta)\left|\frac{\tau}{\epsilon^r}\right|^{k}\right),
\end{equation}
for every $\beta\ge0$, $\tau\in\Omega$ and $\epsilon\in\mathcal{E}$ (with the previously fixed notation). This last inequality yields
\begin{align*}
|I_1|&\le\int_{\rho/2}^{\infty}|W_{i+1,\beta}(|u|e^{\sqrt{-1}\gamma_{i+1}},\epsilon)| \exp\left(-\frac{|u|^{k}}{|t|^{k}|\epsilon|^{rk}}\cos(k(\gamma_{i+1}-\arg(t\epsilon^r))) \right)\frac{d|u|}{|u|}\\
&\le Z_1Z_0^\beta\beta!\int_{\rho/2}^{\infty}\frac{\left|\frac{u}{\epsilon^r}\right|}{1+\left|\frac{u}{\epsilon^r}\right|^{2k}}\exp\left(\sigma r_{b}(\beta)\left|\frac{u}{\epsilon^r}\right|^{k}\right)\exp\left(-\frac{|u|^{k}}{|t|^{k}|\epsilon|^{rk}}\cos(k(\gamma_{i+1}-\arg(t\epsilon^r))) \right)\frac{d|u|}{|u|}.
\end{align*}
Analogous estimates as those appearing in (\ref{e512}) yield the existence of positive constants $C_6,C_7$ such that
\begin{equation}\label{e821}
|I_1|\le C_6Z_0^\beta \beta!\exp\left(-\frac{C_7}{|\epsilon|^{rk}}\right),
\end{equation}
for all $\beta\ge0$, $\epsilon\in\mathcal{E}$ and $t\in\mathcal{T}\cap D(0,h')$.

One can follow the same steps as before to achieve estimates in the form of (\ref{e821}) when dealing with $|I_2|$. We now give upper bounds for $|I_3|$.

From the fact that (\ref{e551}) holds, one gets
\begin{align*}
|I_3|&\le Z_1Z_0^{\beta}\beta!\frac{\left|\frac{\rho}{2\epsilon^r}\right|}{1+\left|\frac{\rho}{2\epsilon^r}\right|^{2k}}\exp\left(\sigma r_{b}(\beta)\left|\frac{\rho}{2\epsilon^r}\right|^k\right)\frac{2}{\rho}\int_{\theta_{i,i+1}}^{\gamma_{i+1}}\exp\left(-\frac{(\rho/2)^{k}}{|t|^{k}|\epsilon|^{rk}}\cos(k(\theta-\arg(t\epsilon^r)))\right)d\theta\\
&\le \frac{2(\gamma_{i+1}-\theta_{i,i+1})}{\rho}Z_1Z_0^\beta\beta!\exp\left(-C_{8}\frac{(\rho/2)^{k}}{|\epsilon|^{rk}}\right),
\end{align*}
for some $C_8>0$ and every $t\in\mathcal{T}\cap D(0,h')$, $\epsilon\in\mathcal{E}$. This leads to the existence of constants $C_9,C_{10}>0$ with
\begin{equation}\label{e834}
|I_3|\le C_9Z_0^\beta\beta!\exp\left(-\frac{C_{10}}{|\epsilon|^{rk}}\right),
\end{equation}
for all $\beta\ge0$ and $\epsilon\in\mathcal{E}$. The previous estimates may apply to $|I_4|$ accordingly.

We conclude with the study of $|I_5|$. Regarding (\ref{e494}), one has
$$|I_5|\le \int_{0}^{\rho/2}\frac{\tilde{c}_1\tilde{c}_0^{\beta}\beta! h_{\mathbb{M}}(\tilde{K}_2|\epsilon|)\left|\frac{u}{\epsilon^r}\right|}{1+\left|\frac{u}{\epsilon^r}\right|^{2k}}\exp\left(\sigma r_b(\beta)\left|\frac{u}{\epsilon^r}\right|^{k}\right)\exp\left(-\frac{|u|^{k}}{|\epsilon|^{rk}|t|^k}\cos(k(\theta_{i,i+1}-\arg(t\epsilon^r)))\right)\frac{d|u|}{|u|},$$
for some $\tilde{c}_{0},\tilde{c}_{1},\tilde{K}_2>0$, and all $\epsilon\in\mathcal{E}$. If $t\in\mathcal{T}\cap D(0,h')$, then the previous expression can be estimated from above, following the same steps as in (\ref{e512}), by
$$\tilde{c}_1\tilde{c}_0^{\beta}\beta!h_{\mathbb{M}}(\tilde{K}_2|\epsilon|) \int_{0}^{\rho/2}\exp\left(-C_{11}\frac{|u|^{k}}{|\epsilon|^{rk}}\right) d\left(\frac{|u|}{|\epsilon|^{rk}}\right),$$
for some $C_{11}>0$. This entails the existence of $\tilde{c}_2>0$ such that
\begin{equation}\label{e844}
|I_5|\le \tilde{c}_2\tilde{c}_0^{\beta}\beta!h_{\mathbb{M}}(\tilde{K}_2|\epsilon|),
\end{equation}
which is valid for all $\beta\ge0$, and every $\epsilon\in\mathcal{E}$.

At this point, we observe that, as it can be found in~\cite{jj}, one has
$$\omega(\mathbb{M})=\lim_{n\to\infty}\frac{\log(M_{n+1})-\log(M_{n})}{\log(n)}.$$
Since we have assumed that $\omega(\mathbb{M})<1/(rk)$, it is straightforward to check the existence of $A,B>0$ such that
\begin{equation*}
M_p\le A B^p(p!)^{1/(rk)},\quad p\ge0.
\end{equation*}
This fact, together with standard estimates, guarantee that
\begin{equation}\label{e847}
h_{\mathbb{M}}(\tilde{K}_2|\epsilon|)\le e^{-K'/|\epsilon|^{rk}},
\end{equation}
for some $K'>0$ and all $\epsilon\in\mathcal{E}$.

In view of (\ref{e821}), (\ref{e834}), (\ref{e844}) and (\ref{e847}) we deduce the existence of $\tilde{K}_4,K_5>0$ such that
$$|X_{i+1}(t,z,\epsilon)-X_{i}(t,z,\epsilon)|\le k\exp\left(-\frac{\tilde{K}_4}{|\epsilon|^{rk}}\right)\sum_{\beta\ge0}(K_5|z|)^{\beta},$$
for all $\mathcal{T}\cap D(0,h')$ and $\epsilon\in\mathcal{E}$. From this, one concludes that
$$|X_{i+1}(t,z,\epsilon)-X_{i}(t,z,\epsilon)|\le \tilde{K}_3e^{-\tilde{K}_4/|\epsilon|^{rk}},$$
for some $\tilde{K}_3>0$ and every $t\in\mathcal{T}\cap D(0,h')$, $\epsilon\in\mathcal{E}$ and $z\in D(0,1/(2K_5))$.
\end{proof}

\section{Existence of formal solution in the complex parameter and two level asymptotic expansions}\label{seccionotramas}

\subsection{A general Ramis-Sibuya theorem in two levels}

In the previous section, we have observed (Theorem~\ref{teo445} and Theorem~\ref{teo526}) a different behavior of the difference of two adjacent solutions, depending on the geometry of the problem and the nature of the elements appearing in the equation. This causes the existence of a formal solution of the main problem which can be put as a sum of two formal power series, related to both phenomena.

In the previous work~\cite{lama}, Section 6.1, the first and second authors developed a novel version of Ramis-Sibuya theorem in two Gevrey levels. This result is no longer available in this problem, where more general asymptotics, associated with strongly regular sequences, may appear.

For a reference on the classical version of Ramis-Sibuya theorem, we refer to~\cite{hssi}, Theorem XI-2-3. The next lemma is a general version of Lemma XI-2-6 from \cite{hssi} in the framework of strongly regular sequences.

\begin{lemma}\label{lema586}
Let $\mathbb{M}=(M_{p})_{p\in\N_0}$ be a strongly regular sequence, and let $(\mathcal{E}_{i})_{0\le i\le \nu-1}$ be a good covering in $\C^{\star}$. Assume there exist $f_{1}$,\ldots,$f_{\nu}$ such that:
\begin{itemize}
\item[$(i)$] $f_{\ell}$ is holomorphic in $\mathcal{E}_{\ell-1}\cap \mathcal{E}_{\ell}$ for every $\ell=1,\ldots,\nu$ (where $\mathcal{E}_{\nu}:=\mathcal{E}_{0}$).
\item[$(ii)$] There exist $C_1,C_2>0$ such that
$$|f_{\ell}(\epsilon)|\le C_1 h_{\mathbb{M}}(C_2|\epsilon|),$$
for every $\epsilon\in\mathcal{E}_{\ell-1}\cap \mathcal{E}_{\ell}$, and all $\ell=1,\ldots,\nu$.
\end{itemize}
Then, there exist $\psi_{0}$,\ldots,$\psi_{\nu-1}$ and a formal power series $\hat{\psi}=\sum_{p\ge0}a_{p}\epsilon^p\in\C[[\epsilon]]$ such that
\begin{itemize}
\item[$(i)$] $\psi_{\ell}$ admits $\hat{\psi}$ as its $\mathbb{M}$-asymptotic expansion in $\mathcal{E}_{\ell}$ (see Definition~\ref{defi314}), for all $\ell=0,\ldots,\nu-1$.
\item[$(ii)$] $f_{\ell}(\epsilon)=\psi_{\ell}(\epsilon)-\psi_{\ell-1}(\epsilon)$ for $\epsilon\in \mathcal{E}_{\ell-1}\cap\mathcal{E}_{\ell}$, for every $0\le \ell\le \nu-1$.
\end{itemize}
\end{lemma}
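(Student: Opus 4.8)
**

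The plan is to mimic the classical proof of the Ramis--Sibuya lemma (Lemma XI-2-6 in \cite{hssi}), replacing the Gevrey flatness estimate by the $h_{\mathbb{M}}$-flatness coming from hypothesis $(ii)$, and replacing the usual Borel-Ritt interpolation by its strongly regular counterpart. The starting point is the \emph{decomposition} of the cocycle $(f_\ell)_\ell$. First I would construct, for each $\ell=1,\dots,\nu$, a function $\psi_\ell$ holomorphic on $\mathcal{E}_\ell$ by means of a Cauchy--Heine type integral: set
$$
\psi_\ell(\epsilon):=\frac{1}{2\pi\sqrt{-1}}\sum_{m=1}^{\nu}\int_{L_m} \frac{f_m(\xi)}{\xi-\epsilon}\,d\xi,
$$
where each $L_m$ is a half-line (or segment emanating from the origin) contained in $\mathcal{E}_{m-1}\cap\mathcal{E}_m$, oriented from $0$ outwards. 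The classical jump relation for Cauchy--Heine integrals immediately yields property $(ii)$ of the conclusion, namely $\psi_\ell-\psi_{\ell-1}=f_\ell$ on the overlap $\mathcal{E}_{\ell-1}\cap\mathcal{E}_\ell$, because the difference picks up exactly the residue contribution of the path $L_\ell$ while the remaining integrals vary holomorphically across that overlap.

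The second step is to produce the common formal series $\hat\psi=\sum_{p\ge0}a_p\epsilon^p$ and to show each $\psi_\ell$ admits it as $\mathbb{M}$-asymptotic expansion on $\mathcal{E}_\ell$. The coefficients $a_p$ should be read off from the expansion of the Cauchy--Heine integrals at the origin; concretely, expanding $1/(\xi-\epsilon)=\sum_{p\ge0}\epsilon^p\xi^{-p-1}$ formally suggests
$$
a_p=\frac{1}{2\pi\sqrt{-1}}\sum_{m=1}^{\nu}\int_{L_m}\frac{f_m(\xi)}{\xi^{p+1}}\,d\xi,
$$
and one must check these integrals converge. Here the flatness hypothesis $|f_m(\xi)|\le C_1 h_{\mathbb{M}}(C_2|\xi|)$ is decisive: since $h_{\mathbb{M}}(t)\le M_p t^p$ for every $p$ by the very definition of $h_{\mathbb{M}}$, the integrand is dominated near $0$ by a bounded multiple of $M_p|\xi|^{p}|\xi|^{-p-1}$, which is integrable, and the defect between $\psi_\ell$ and its partial sum $\sum_{0\le p\le n-1}a_p\epsilon^p$ is controlled by the tail of the geometric expansion. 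Estimating that remainder on a proper subsector $T\subseteq\mathcal{E}_\ell$ and using $h_{\mathbb{M}}(C_2|\xi|)\le M_n (C_2|\xi|)^n$ yields a bound of the shape $C_T A_T^n M_n|\epsilon|^n$, which is exactly Definition~\ref{defi314}.

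I expect the main obstacle to be the remainder estimate in this last step, i.e.\ quantifying the $\mathbb{M}$-asymptotic bound with the correct sequence $M_n$ rather than a Gevrey bound. The delicate point is that the Cauchy--Heine integral along a path $L_m$ not adjacent to $\mathcal{E}_\ell$ is genuinely flat (its contribution, plus the arc/large-$|\xi|$ tails, must be absorbed into the constants without spoiling the $M_n$-type growth), whereas the adjacent path contributes the leading asymptotics; separating these cleanly and invoking the inequality $h_{\mathbb{M}}(t)\le M_n t^n$ uniformly is where the moderate growth $(\mu)$ and the definition of $h_{\mathbb{M}}$ must be used carefully. Once the single-path estimate is in place, summing over the finitely many $m$ and over the $\nu$ sectors is routine, and the uniqueness of the common series $\hat\psi$ follows from the fact that all the $\psi_\ell$ share the same jumps, closing the argument.
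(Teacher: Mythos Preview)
Your approach is essentially the paper's own: Cauchy--Heine integrals along segments in the overlaps, the jump relation for property $(ii)$, geometric expansion of $1/(\xi-\epsilon)$ to identify the common coefficients $a_p$, and the flatness bound on $f_\ell$ for the remainder. The only difference is in the remainder estimate: the paper bounds $\int_0^r h_{\mathbb{M}}(C_2 t)\,t^{-N-2}\,dt$ by quoting an inequality of the form $\int_0^\infty s^{p-1}h_{\mathbb{M}}(K_1/s)\,ds\le K_2K_3^pM_p$ from \cite{javi1}, whereas you propose to insert the pointwise bound $h_{\mathbb{M}}(t)\le M_n t^n$ directly; both routes work and lead to the same conclusion after an application of $(\mu)$.

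One small slip to fix: when you argue that the integrand defining $a_p$ is ``dominated near $0$ by a bounded multiple of $M_p|\xi|^{p}|\xi|^{-p-1}$, which is integrable'', note that $|\xi|^{-1}$ is \emph{not} integrable at the origin. You should instead use $h_{\mathbb{M}}(C_2|\xi|)\le M_{p+2}(C_2|\xi|)^{p+2}$ (or any index $\ge p+2$), which gives an integrable bound near $0$; the resulting $M_{p+2}$ (and likewise $M_{N+2}$ in the remainder) is then brought back to $M_p$ (resp.\ $M_N$) via property $(\mu)$, exactly as you anticipated.
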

\begin{proof}
We only give details at some strategic points in the proof where it defers from the proof of Lemma XI-2-6 in~\cite{hssi}, which is quite classical and known.

Let $\mathcal{C}_{\ell}:=\{te^{i\theta_{\ell}}:0<t<\tilde{r}_{\mathcal{E}}\}$, for $\ell=0,\ldots,\nu-1$. Here, $\theta_{\ell}$ stands for a fixed argument in $\mathcal{E}_{\ell-1}\cap\mathcal{E}_{\ell}$, and $\tilde{r}_{\mathcal{E}}>0$ is such that $ \mathcal{C}_{\ell}\subseteq (\mathcal{E}_{\ell-1}\cap\mathcal{E}_{\ell})$.

The function $\psi_{\ell}$ defined for every $\ell=0,\ldots,\nu-1$ by
$$\psi_{\ell}(\epsilon):=\frac{-1}{2\pi i}\sum_{h=0}^{\nu-1}\int_{\mathcal{C}_{h}}\frac{f_{h}(\xi)}{\xi-\epsilon}d\xi,$$
can be continued analytically onto $\mathcal{E}_{\ell}$ by path deformation of the integrals involved. These functions satisfy $(ii)$ in the statement. In order to provide the asymptotic behavior of $(i)$, let $0\le\ell\le \nu-1$, and  consider a subsector $T$ of $\mathcal{E}_{\ell}$ which, without loss of generality, we admit to have radius less than $\tilde{r}_{\mathcal{E}}$. One can deform the path $\mathcal{C}_{\ell+1}$ (resp. $\mathcal{C}_{\ell}$) to $\tilde{\mathcal{C}}_{\ell+1}$ (resp. $\tilde{\mathcal{C}}_{\ell}$) without moving the endpoints so that $T$ is contained in the interior of a closed curve $\tilde{\mathcal{C}}_{\ell}+\gamma_{\ell}-\tilde{\mathcal{C}}_{\ell+1}$, where $\gamma_{\ell}$ is a circular arc from $\tilde{r}_{\mathcal{E}}e^{i\theta_{\ell}}$ to $\tilde{r}_{\mathcal{E}}e^{i\theta_{\ell+1}}$. Moreover, one may assume that $\tilde{\mathcal{C}}_{\ell}:=L_{\ell}+\Gamma_{\ell}$, with
$$L_{\ell}:=\{\epsilon=te^{i\omega_{\ell}}:0< t\le r_{\mathcal{E},1}\}$$ and
$$\Gamma_{\ell}:=\{\epsilon=\mu_{\ell}(\tau):0\le \tau<1 \},$$
for some $r_{\mathcal{E},1}<\tilde{r}_{\mathcal{E}}$, some $\theta_{\ell}<\omega_{\ell}$ such that $\omega_{\ell}$ is an argument of $\mathcal{E}_{\ell}$ and where
$$\mu_{\ell}(0)=r_{\mathcal{E},1}e^{i\omega_{\ell}},\quad \mu_{\ell}(1)=\tilde{r}_{\mathcal{E}}e^{i\theta_{\ell}},$$
and  $r_{\mathcal{E},1}\le |\mu_{\ell}(\tau)|<\tilde{r}_{\mathcal{E}}$ for $0\le \tau<1$.

The function $\psi_{\ell}$ can be rewritten in the form
\begin{equation}\label{e615}
\psi_{\ell}(\epsilon)=\frac{-1}{2\pi i}\int_{\tilde{\mathcal{C}}_{\ell}}\frac{f_{\ell}(\xi)}{\xi-\epsilon}d\xi+ \frac{-1}{2\pi i}\int_{\tilde{\mathcal{C}}_{\ell-1}}\frac{f_{\ell+1}(\xi)}{\xi-\epsilon}d\xi+ \frac{-1}{2\pi i}\sum_{h\notin\{\ell-1,\ell\}}\int_{\tilde{\mathcal{C}}_{h}}\frac{f_{h}(\xi)}{\xi-\epsilon}d\xi,
\end{equation}
for $\epsilon\in T$.

There exists a sequence of complex numbers $(a_{p})_{p\ge0}$ such that
\begin{equation}\label{e621a}
\frac{1}{2\pi i}\int_{L_{\ell}}\frac{f_{\ell}(\xi)}{\xi-\epsilon}d\xi=\sum_{p=0}^{N}a_{p}\epsilon^p+\epsilon^{N+1}E_{N+1}(\epsilon),
\end{equation}
for every $\epsilon\in T$ and all $N\in\N_0$, with
$$E_{N+1}(\epsilon):=\frac{1}{2\pi i}\int_{L_{\ell}}\frac{f_{\ell}(\xi)}{\xi^{N+1}(\xi-\epsilon)}.$$
We now provide upper bounds for $|E_{N+1}(\epsilon)|$, $\epsilon\in T$. It is straightforward to check that there exists $0<\theta<\pi/2$ such that
$$|\xi-\epsilon|\ge |\xi|\sin(\theta),\quad \xi\in L_{\ell},\epsilon\in T.$$
This entails
$$
|E_{N+1}(\epsilon)|\le \frac{1}{2\pi}\int_{0}^{r_{\mathcal{E},1}}\frac{|f_{\ell}(te^{i\omega_{\ell}})|}{t^{N+2}\sin(\theta)}dt\le \frac{1}{2\pi\sin(\theta)}\int_{0}^{r_{\mathcal{E},1}}\frac{C_{1}h_{\mathbb{M}}(C_{2}t)}{t^{N+2}}dt.$$
After the change of variable $s=1/t$, one derives
$$|E_{N+1}(\epsilon)|\le\frac{C_1}{2\pi\sin(\theta)}\int_{1/r_{\mathcal{E},1}}^{\infty}h_{\mathbb{M}}(C_2/s)s^{N}ds\le \frac{C_1}{2\pi\sin(\theta)}\int_{0}^{\infty}h_{\mathbb{M}}(C_2/s)s^{N}ds.$$

\noindent \textbf{Remark:} In~\cite{javi1}, Remark 5.8(i), it was proved that given $K_1>0$ there exist $K_2,K_3>0$ such that for every $p\in\N$ one has
$$\int_{0}^{\infty}t^{p-1}h_{\mathbb{M}}(K_{1}/t)dt\le K_2K_3^pM_p.$$

Bearing in mind that $\mathbb{M}$ satisfies $(\mu)$ property (see Definition~\ref{defi310}) one concludes that

$$|E_{N+1}(\epsilon)|\le \frac{C_1}{2\pi\sin(\theta)}K_2K_3^{N+1}M_{N+1}\le \frac{C_1 K_2K_3 AM_1}{2\pi\sin(\theta)}(AK_3)^{N}M_{N},$$
for every $\epsilon\in T$.
One concludes from (\ref{e621a}) that for every subsector $T$ of $\mathcal{E}_{\ell}$ there exist $\Delta_1,\Delta_2>0$ such that
$$\left|\frac{1}{2\pi i}\int_{L_{\ell}}\frac{f_{\ell}(\xi)}{\xi-\epsilon}d\xi-\sum_{p=0}^{N}a_p\epsilon^p\right|\le \Delta_1\Delta_2^{N}M_{N}|\epsilon|^{N+1},\quad \epsilon\in T,N\in\N.$$

Analogous estimates can be obtained for the remaining terms of the sum in (\ref{e615}), and the result is attained.
\end{proof}

One can also generalize in this context the classical notion of multisummability (see \cite{ba}, Chapter 6).

\begin{defin}\label{defi446} Let $(\mathbb{E},||.||_{\mathbb{E}})$ be a complex Banach space, let $\kappa>0$ and let $\mathbb{M} = (M_{p})_{p \geq 0}$ be a strongly regular sequence such that $\omega(\mathbb{M})<1/\kappa$.

Let $\mathcal{E}$ be a bounded open sector centered at 0 with aperture $\pi\omega(\mathbb{M}) + \delta_{2}$ for some
$\delta_{2}>0$ and let $\mathcal{F}$ be a bounded open sector centered at 0 with aperture $\frac{\pi}{\kappa} + \delta_{1}$
for some $\delta_{1}>0$ such that the inclusion $\mathcal{E} \subset \mathcal{F}$ holds.

A formal power series $\hat{f}(\epsilon) = \sum_{n \geq 0} a_{n} \epsilon^{n} \in \mathbb{E}[[\epsilon]]$ is said to be
$(\mathbb{M},\kappa)-$summable on $\mathcal{E}$ if there exist a formal series
$\hat{f}_{2}(\epsilon) \in \mathbb{E}[[\epsilon]]$ which is $\mathbb{M}-$summable on $\mathcal{E}$ with
$\mathbb{M}-$sum $f_{2}: \mathcal{E} \rightarrow \mathbb{E}$ and a second formal series
$\hat{f}_{1}(\epsilon) \in \mathbb{E}[[\epsilon]]$ which is $\kappa-$summable on $\mathcal{F}$ with
$\kappa-$sum $f_{1}: \mathcal{F} \rightarrow \mathbb{E}$ such that
$\hat{f} = \hat{f}_{1} + \hat{f}_{2}$. Furthermore, the holomorphic function
$f(\epsilon) = f_{1}(\epsilon) + f_{2}(\epsilon)$ defined on $\mathcal{E}$ is called the
$(\mathbb{M},\kappa)-$sum of $\hat{f}$ on $\mathcal{E}$.
\end{defin}

\begin{theo}\textbf{(RS)} Let $(\mathbb{E},\left\|\cdot\right\|)$ be a complex Banach space, $(\mathcal{E}_{i})_{0\le i\le \nu-1}$ a good covering in $\C^{\star}$ and fix a strongly regular sequence $\mathbb{M}=(M_{p})_{p\ge0}$. We assume $G_i:\mathcal{E}_{i}\to\mathbb{E}$ is a holomorphic function for all $0\le i\le \nu-1$ and put $\Delta_{i}(\epsilon):=G_{i+1}(\epsilon)-G_i(\epsilon)$ for every $\epsilon\in Z_i:=\mathcal{E}_{i}\cap \mathcal{E}_{i+1}$. Moreover, we assume the next assertions:
\begin{itemize}
\item[$1)$] The functions $G_i(\epsilon)$ are bounded as $\epsilon\in\mathcal{E}_{i}$ tends to 0, for all $0\le i\le \nu-1$.
\item[$2)$] Let $\alpha>0$ and nonempty sets $I_1,I_2\subseteq\{0,\ldots,\nu-1\}$ such that $I_1\cup I_2=\{0,\ldots,\nu-1\}$ and $I_1\cap I_2=\emptyset$.

For every $i\in I_1$ there exist $K_1,M_1>0$ such that
$$\left\|\Delta_i(\epsilon)\right\|_{\mathbb{E}}\le K_1e^{-\frac{M_1}{|\epsilon|^{\alpha}}},\quad \epsilon\in Z_i.$$

In addition to this, for every $i\in I_2$ there exist $K_2,M_2>0$ such that
$$\left\|\Delta_i(\epsilon)\right\|_{\mathbb{E}}\le K_1h_{\mathbb{M}}(M_2|\epsilon|),\quad \epsilon\in Z_i.$$

\end{itemize}

Then, there exists a convergent power series $a(\epsilon)\in\mathbb{E}\{\epsilon\}$ defined on some neighborhood of the origin and $\hat{G}^1(\epsilon),\hat{G}^2(\epsilon)\in\mathbb{E}[[\epsilon]]$ such that $G_i$ can be written in the form
$$G_i(\epsilon)=a(\epsilon)+G_{i}^{1}(\epsilon)+G^{2}_{i}(\epsilon).$$
$G^{1}_{i}(\epsilon)$ is holomorphic on $\mathcal{E}_{i}$ and has $\hat{G}^{1}(\epsilon)$ as its $1/\alpha$-Gevrey asymptotic expansion on $\mathcal{E}_{i}$ for every $i\in I_1$. $G^{2}_{i}(\epsilon)$ is holomorphic on $\mathcal{E}_{i}$ and has $\hat{G}^{2}(\epsilon)$ as its $\mathbb{M}-$asymptotic expansion on $\mathcal{E}_i$, for $i\in I_2$.

Assume moreover that for some integer $i_{0} \in I_{2}$ is such that
$I_{\delta_{1},i_{0},\delta_{2}} = \{ i_{0} - \delta_{1},\ldots,i_{0},\ldots,i_{0}+\delta_{2} \} \subset I_{2}$ for
some integers $\delta_{1},\delta_{2} \geq 0$ and with the property that
\begin{equation}\label{e1022}
 \mathcal{E}_{i_0} \subset S_{\pi/\alpha} \subset \bigcup_{h \in I_{\delta_{1},i_{0},\delta_{2}}} \mathcal{E}_{h}
 \end{equation}
where $S_{\pi/\alpha}$ is a sector with aperture slightly larger than $\pi/\alpha$. Then, the formal
series $\hat{G}(\epsilon)$ is $(\mathbb{M},\alpha)-$summable on $\mathcal{E}_{i_0}$ as stated in Definition~\ref{defi446} and its
$(\mathbb{M},\alpha)-$sum is $G_{i_0}(\epsilon)$.
\end{theo}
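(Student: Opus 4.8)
The plan is to split the $1$-cocycle $(\Delta_i)_{0\le i\le \nu-1}$ into two pieces, one living at the Gevrey level $\alpha$ and one at the $\mathbb{M}$-level, and to treat each by a single-level Ramis--Sibuya argument. Concretely, I would set the exponentially flat part $\Delta_i^{(1)}:=\Delta_i$ if $i\in I_1$ and $\Delta_i^{(1)}:=0$ if $i\in I_2$, and the $\mathbb{M}$-flat part $\Delta_i^{(2)}:=\Delta_i$ if $i\in I_2$ and $\Delta_i^{(2)}:=0$ if $i\in I_1$, so that $\Delta_i=\Delta_i^{(1)}+\Delta_i^{(2)}$ on each $Z_i$. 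By hypothesis $2)$, $(\Delta_i^{(1)})$ is exponentially flat of order $\alpha$ on every overlap $Z_i$ (trivially when the term vanishes), while $(\Delta_i^{(2)})$ satisfies a bound $\|\Delta_i^{(2)}(\epsilon)\|_{\mathbb{E}}\le K h_{\mathbb{M}}(M|\epsilon|)$ on every $Z_i$.

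First I would apply the classical Gevrey version of the Ramis--Sibuya theorem (\cite{hssi}, Theorem XI-2-3) to $(\Delta_i^{(1)})$: this produces holomorphic maps $G_i^{1}:\mathcal{E}_i\to\mathbb{E}$ and a single formal series $\hat{G}^{1}\in\mathbb{E}[[\epsilon]]$ such that $G_i^{1}$ admits $\hat{G}^{1}$ as its $1/\alpha$-Gevrey asymptotic expansion on $\mathcal{E}_i$ and $\Delta_i^{(1)}=G_{i+1}^{1}-G_i^{1}$ on $Z_i$. In parallel, I would apply Lemma~\ref{lema586} (in its obvious $\mathbb{E}$-valued form, whose proof is identical, the estimate of \cite{javi1}, Remark 5.8, being insensitive to the target Banach space) to $(\Delta_i^{(2)})$, obtaining holomorphic $G_i^{2}:\mathcal{E}_i\to\mathbb{E}$ and $\hat{G}^{2}\in\mathbb{E}[[\epsilon]]$ with $G_i^{2}$ admitting $\hat{G}^{2}$ as its $\mathbb{M}$-asymptotic expansion on $\mathcal{E}_i$ and $\Delta_i^{(2)}=G_{i+1}^{2}-G_i^{2}$. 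Then on each $Z_i$,
$$G_{i+1}-G_i=\Delta_i=(G_{i+1}^{1}-G_i^{1})+(G_{i+1}^{2}-G_i^{2}),$$
so the functions $a_i:=G_i-G_i^{1}-G_i^{2}$ agree on intersections and glue to a single holomorphic $a$ on $\bigcup_i\mathcal{E}_i=\mathcal{U}\setminus\{0\}$. Since $G_i$ is bounded by hypothesis $1)$ and $G_i^{1},G_i^{2}$ are bounded near $0$ (each admitting a power-series asymptotic expansion), $a$ is bounded on a punctured neighbourhood of the origin, hence extends by the removable singularity theorem for $\mathbb{E}$-valued holomorphic functions to a convergent series $a(\epsilon)\in\mathbb{E}\{\epsilon\}$. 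This yields $G_i=a+G_i^{1}+G_i^{2}$ and the asymptotic claims of the first part.

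For the summability assertion I would set $\hat{G}:=\hat{a}+\hat{G}^{1}+\hat{G}^{2}$, with $\hat{a}$ the Taylor series of $a$, and exhibit the splitting demanded by Definition~\ref{defi446} relative to $\mathcal{E}_{i_0}$ (the compatibility $\omega(\mathbb{M})<1/\alpha$ required there being in force). Because $i_0\in I_2$ and $\mathcal{E}_{i_0}$ has opening larger than $\omega(\mathbb{M})\pi$, the function $G_{i_0}^{2}$ is an $\mathbb{M}$-sum of $\hat{G}^{2}$ on $\mathcal{E}_{i_0}$ in the sense of Definition~\ref{defi344}, so I take $\hat{f}_2:=\hat{G}^{2}$, $f_2:=G_{i_0}^{2}$. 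The crucial point is the Gevrey part: since every index of the chain $I_{\delta_1,i_0,\delta_2}$ lies in $I_2$, one has $\Delta_h^{(1)}=0$, hence $G_{h+1}^{1}=G_h^{1}$, for all consecutive $h$ inside the chain; therefore the $G_h^{1}$ are restrictions of one holomorphic function $G^{1}$, which by (\ref{e1022}) is defined on the sector $S_{\pi/\alpha}$ of aperture slightly larger than $\pi/\alpha$ and admits $\hat{G}^{1}$ as its $1/\alpha$-Gevrey asymptotic expansion there. Consequently $\hat{f}_1:=\hat{a}+\hat{G}^{1}$ is $\alpha$-summable on $\mathcal{F}:=S_{\pi/\alpha}$ with sum $f_1:=a+G^{1}$ ($a$ being convergent, hence Gevrey of every order). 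Since $\hat{G}=\hat{f}_1+\hat{f}_2$ and $f_1+f_2=a+G_{i_0}^{1}+G_{i_0}^{2}=G_{i_0}$ on $\mathcal{E}_{i_0}$, Definition~\ref{defi446} shows $\hat{G}$ is $(\mathbb{M},\alpha)$-summable on $\mathcal{E}_{i_0}$ with $(\mathbb{M},\alpha)$-sum $G_{i_0}$.

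The delicate step is not the bookkeeping of the two cocycles but the summability conclusion: one must recognise that the hypothesis (\ref{e1022}), forcing the whole chain to sit inside $I_2$, is exactly what annihilates the Gevrey coboundary $(\Delta_h^{(1)})$ along the chain and lets $G^{1}$ continue analytically across an aperture exceeding $\pi/\alpha$. Checking that the expansion $\hat{G}^{1}$ survives this continuation to all of $S_{\pi/\alpha}$ (so that Watson's lemma applies and the $\alpha$-sum is unique) is where the geometry genuinely enters; by comparison, the $\mathbb{E}$-valued extension of Lemma~\ref{lema586} and the removable-singularity gluing are routine.
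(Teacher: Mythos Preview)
Your proof is correct and follows essentially the same route as the paper's: split the cocycle as $\Delta_i=\Delta_i^{(1)}+\Delta_i^{(2)}$ according to the partition $I_1\cup I_2$, solve each piece by the appropriate single-level coboundary lemma (classical Gevrey for $\Delta^{(1)}$, Lemma~\ref{lema586} for $\Delta^{(2)}$), glue the remainder $G_i-G_i^1-G_i^2$ into a convergent $a$, and for the summability assertion use that $\Delta_h^{(1)}=0$ along the chain $I_{\delta_1,i_0,\delta_2}\subset I_2$ to continue $G_{i_0}^1$ to the wide sector $S_{\pi/\alpha}$. The only cosmetic difference is that the paper invokes Lemma~XI-2-6 of \cite{hssi} directly rather than the full Theorem~XI-2-3, and absorbs $a$ separately rather than into $\hat f_1$.
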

\begin{proof}
We define $\Delta^{j}_i(\epsilon)=\Delta_i(\epsilon)\delta_{ij}$ for $j=1,2$, where $\delta_{ij}$ stands for the Kronecker function with value 1 if $i\in I_j$ and 0 otherwise. A direct application of Lemma XI-2-6 in~\cite{hssi} (resp. Lemma~\ref{lema586}) provides that for every $0\le i\le \nu-1$ there exist holomorphic functions $\Psi_{i}^{1}:\mathcal{E}_{i}\to\C$ (resp. $\Psi_{i}^{2}:\mathcal{E}_{i}\to\C$) such that $\Delta_{i}^{j}(\epsilon)=\Psi_{i+1}^{j}(\epsilon)-\Psi_{i}^{j}(\epsilon)$ for every $\epsilon\in Z_i$, $j=1,2$. We put $\Psi_{\nu}^{j}(\epsilon)=\Psi_{0}^{j}(\epsilon)$. In addition to this, there exist formal power series $\sum_{m\ge0}\phi_{m,j}\epsilon^{m}\in\mathbb{E}[[\epsilon]]$, $j=1,2$,
such that for all $0\le \ell\le \nu-1$ and any closed proper subsector $\mathcal{W}\subseteq\mathcal{E}_{\ell}$ with vertex at 0, there exist $\breve{K}_{\ell},\breve{M}_{\ell}>0$ with
$$\left\|\Psi_{\ell}^{1}(\epsilon)-\sum_{m=0}^{N-1}\phi_{m,1}\epsilon^m\right\|_{\mathbb{E}}\le \breve{K}_{\ell}(\breve{M}_{\ell})^{N}N!^{1/\alpha}|\epsilon|^{N},$$
and
$$\left\|\Psi_{\ell}^{2}(\epsilon)-\sum_{m=0}^{N-1}\phi_{m,2}\epsilon^m\right\|_{\mathbb{E}}\le \breve{K}_{\ell}(\breve{M}_{\ell})^{N}M_{N}|\epsilon|^{N},$$
for every $\epsilon\in\mathcal{W}$, and all positive $N$.
The bounded holomorphic function $a_i(\epsilon)=G_{i}(\epsilon)-\Psi_i^1(\epsilon)-\Psi_i^2(\epsilon)$, for every $0\le i\le \nu-1$, and $\epsilon\in\mathcal{E}_i$ is such that $a_{i+1}(\epsilon)=a_i(\epsilon)$ for $\epsilon\in Z_i$. Hence, one can define a holomorphic function $a$ in a neighborhood of the origin which coincides with $a_i$  in $\mathcal{E}_i$ for all $0\le i\le \nu-1$.

The result follows from here for $G_i(\epsilon)=a(\epsilon)+\Psi_i^1(\epsilon)+\Psi_i^2(\epsilon)$, $\epsilon\in\mathcal{E}_i$ and $0\le i\le \nu-1$.

Under the last additional assumption in the statement of the theorem, we can observe that in the decomposition
$G_{i_0}(\epsilon) = a(\epsilon) + G_{i_0}^{1}(\epsilon) + G_{i_0}^{2}(\epsilon)$, the function $G_{i_0}^{1}(\epsilon)$ can
be analytically continued on the sector $S_{\pi/\alpha}$ and has the formal series $\hat{G}^{1}(\epsilon)$ as asymptotic
expansion of Gevrey order $1/\alpha$ on $S_{\pi/\alpha}$ (this is the consequence of the fact that
$\Delta_{h}^{1}(\epsilon)=0$ for $h \in I_{\delta_{1},i_{0},\delta_{2}}$). Hence, $G_{i_0}^{1}(\epsilon)$ is the
$\alpha-$sum of $\hat{G}^{1}(\epsilon)$ on $S_{\pi/\alpha}$. Moreover, we already know that the function
$G_{i_0}^{2}(\epsilon)$ has $\hat{G}^{2}(\epsilon)$ as $\mathbb{M}-$asymptotic expansion on $\mathcal{E}_{i_0}$, meaning that
$G_{i_0}^{2}$ is the $\mathbb{M}-$sum of $\hat{G}^{2}(\epsilon)$ on $\mathcal{E}_{i_0}$. In other words, by Definition~\ref{defi446} above,
the formal series $\hat{G}(\epsilon) = a(\epsilon) + \hat{G}^{1}(\epsilon) + \hat{G}^{2}(\epsilon)$ is
$(\mathbb{M},\alpha)-$summable on $\mathcal{E}_{i_0}$ and its $(\mathbb{M},\alpha)-$sum is
$G_{i_0}(\epsilon)$.

\end{proof}

\textbf{Remark:} In the problem under study, it is sufficient to depart from holomorphic functions $b_{s\kappa_0\kappa_1\beta}^{(i,i+1)}(\epsilon)$ which are holomorphic and bounded in $\mathcal{E}_{i}\cap\mathcal{E}_{i+1}$ and such that admit null $\mathbb{M}-$asymptotic expansion in their domain of definition. Under these initial assumptions, one can apply the novel version of Ramis-Sibuya theorem in the present work in order to handle the problem.

\textbf{Remark:} Observe that the latter statement in the Theorem is feasible under the Assumptions made on the geometry of the problem. If, for example,  $\omega(\mathbb{M})=(4rk)^{-1}$, the aperture of the elements in the good covering can be chosen close but larger than this number. In order not to attain singular directions, so that (\ref{e1022}) holds, one should depart from a subset $\mathcal{S}$ satisfying that $s^2\le4\pi r_1/k$ for every $(s,\kappa_0,\kappa_1)\in\mathcal{S}$.

\section{Existence of formal power series solutions in the complex parameter}

We state the main result in this work, which guarantees the existence of a formal power series in the perturbation parameter which can be split into two formal power series so that the different behavior of the actual solution appears explicitly.

In the remaining results, $\mathbb{E}$ stands for the Banach space of holomorphic functions on the set $(\mathcal{T}\cap D(0,h'))\times D(0, R_0)$ endowed with the supremum norm, where $h'$ and $R_0=\min(\tilde{R}_0,R'_0)$ are given by the constants appearing in Theorems~\ref{teo445} and~\ref{teo526}.

\begin{theo}\label{teo726}
We make assumptions (A) and (B), and suppose that the estimates in (\ref{e325}) and (\ref{e442}) are satisfied. Then, there exists a formal power series
\begin{equation}\label{e691}
\hat{X}(t,z,\epsilon)=\sum_{\beta\ge0}H_{\beta}(t,z)\frac{\epsilon^{\beta}}{\beta!} \in\mathbb{E}[[\epsilon]],
\end{equation}
formal solution of
\begin{equation}\label{e692}
(\epsilon^{r_1}(t^{k+1}\partial_t)^{s_1}+a)\partial_{z}^{S}\hat{X}(t,z,\epsilon)= \sum_{(s,\kappa_0,\kappa_1)\in\mathcal{S}} \hat{b}_{s\kappa_0\kappa_1}(z,\epsilon)t^s (\partial_{t}^{\kappa_0}\partial_{z}^{\kappa_1}\hat{X})(t,z,\epsilon).
\end{equation}
In addition to this, $\hat{X}$ can be written in the form
$$\hat{X}(t,z,\epsilon)=a(t,z,\epsilon)+\hat{X}^{1}(t,z,\epsilon)+ \hat{X}^{2}(t,z,\epsilon),$$
where $a(t,z,\epsilon)\in\mathbb{E}\{\epsilon\}$, $\hat{X}^{1}(t,z,\epsilon), \hat{X}^{2}(t,z,\epsilon)\in\mathbb{E}[[\epsilon]]$. Moreover, for every $0\le i\le \nu-1$ the $\mathbb{E}-$valued function $\epsilon\mapsto X_{i}(t,z,\epsilon)$ constructed in Theorem~\ref{teosol} is given by
$$X_{i}(t,z,\epsilon)=a(t,z,\epsilon)+X^{1}_i(t,z,\epsilon)+X^{2}_i(t,z,\epsilon),$$
where $\epsilon\mapsto X_{i}^{1}(t,z,\epsilon)$ is an $\mathbb{E}-$valued function which admits $\hat{X}^1(t,z,\epsilon)$ as its $s_1/r_1-$Gevrey asymptotic expansion on $\mathcal{E}_{i}$, and where $\epsilon\mapsto X_{i}^{2}(t,z,\epsilon)$ is an $\mathbb{E}-$valued function which admits $\hat{X}^2(t,z,\epsilon)$ as its $\mathbb{M}-$asymptotic expansion on $\mathcal{E}_{i}$.

Under the further assumptions on the good covering $\{ \mathcal{E}_{i} \}_{0 \leq i \leq \nu-1}$ and directions $d_{i}$
introduced in Definition~\ref{defin2} one can guarantee multisummability of the formal solution. Assume there exist $0 \leq i_{0} \leq \nu-1$ and two integers $\delta_{1},\delta_{2} \geq 0$ such that
for all $h \in I_{\delta_{1},i_{0},\delta_{2}} = \{ i_{0} - \delta_{1},\ldots,i_{0},\ldots,i_{0}+\delta_{2} \}$, there are
no singular directions $\frac{\pi(2j+1) + \mathrm{arg}(a)}{ks_{1}}$, for any $0 \leq j \leq ks_{1}-1$, in between
$\gamma_{p}$ and $\gamma_{p+1}$, such that
$$ \mathcal{E}_{i_0} \subset S_{\frac{\pi}{r_{1}/s_{1}}} \subset \bigcup_{h \in I_{\delta_{1},i_{0},\delta_{2}}} \mathcal{E}_{h} $$
where $S_{\frac{\pi}{r_{1}/s_{1}}}$ is a sector with aperture a bit larger than $\frac{\pi}{r_{1}/s_{1}}$.
Then, the formal series $\hat{X}(t,z,\epsilon)$ is $(\mathbb{M},\frac{r_1}{s_1})$-summable on $\mathcal{E}_{i_0}$ and its
$(\mathbb{M},\frac{r_1}{s_1})$-sum is given by $X_{i_0}(t,z,\epsilon)$.

\end{theo}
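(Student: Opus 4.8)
The plan is to regard, for fixed $(t,z)$ in the common domain $(\mathcal{T}\cap D(0,h'))\times D(0,R_0)$, the maps $\epsilon\mapsto X_i(t,z,\epsilon)$ as $\mathbb{E}$-valued holomorphic functions $G_i$ on $\mathcal{E}_i$, and to feed the family $(G_i)_{0\le i\le\nu-1}$ into the two-level Ramis--Sibuya theorem \textbf{(RS)} with $\alpha:=r_1/s_1=rk$ (recall $r=r_1/(s_1k)$ from (\ref{e111})). First I would split $\{0,\ldots,\nu-1\}$ into a disjoint union $I_1\cup I_2$, putting $i\in I_1$ when some singular direction $\frac{\pi(2j+1)+\arg(a)}{ks_1}$ lies between $\gamma_i$ and $\gamma_{i+1}$, and $i\in I_2$ otherwise. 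Theorem~\ref{teo526} then yields $\|G_{i+1}-G_i\|_{\mathbb{E}}\le\tilde K_3 e^{-\tilde K_4/|\epsilon|^{rk}}$ on $\mathcal{E}_i\cap\mathcal{E}_{i+1}$ for $i\in I_1$ (exponential flatness of order $\alpha=rk$), while Theorem~\ref{teo445} yields $\|G_{i+1}-G_i\|_{\mathbb{E}}\le\tilde K_1 h_{\mathbb{M}}(\tilde K_2|\epsilon|)$ for $i\in I_2$ ($\mathbb{M}$-flatness); boundedness of the $G_i$ near $0$ comes from Theorem~\ref{teosol}. These are precisely hypotheses $1)$ and $2)$ of \textbf{(RS)}.

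Applying \textbf{(RS)} produces $a\in\mathbb{E}\{\epsilon\}$ and $\hat X^1,\hat X^2\in\mathbb{E}[[\epsilon]]$ together with holomorphic functions $X_i^1,X_i^2$ on $\mathcal{E}_i$ such that $X_i=a+X_i^1+X_i^2$. Since the auxiliary functions $\Psi_i^1,\Psi_i^2$ of the proof of \textbf{(RS)} are built on \emph{all} sectors through Lemma~XI-2-6 of~\cite{hssi} and Lemma~\ref{lema586}, one gets that $X_i^1$ admits $\hat X^1$ as $s_1/r_1$-Gevrey asymptotic expansion and $X_i^2$ admits $\hat X^2$ as $\mathbb{M}$-asymptotic expansion on every $\mathcal{E}_i$, $0\le i\le\nu-1$. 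Setting $\hat X:=a+\hat X^1+\hat X^2$ and writing it in the form (\ref{e691}) gives the announced decomposition.

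It then remains to verify that $\hat X$ is a formal solution of (\ref{e692}). Since both a Gevrey and an $\mathbb{M}$-asymptotic expansion refine the Poincaré (Taylor) asymptotics in $\epsilon$, each $X_i$ admits $\hat X$ as asymptotic expansion in the classical sense on $\mathcal{E}_i$, and each $\mathbb{M}$-sum $b^{(i)}_{s\kappa_0\kappa_1}$ admits $\hat b_{s\kappa_0\kappa_1}$ as asymptotic expansion. Because $\mathbb{E}$ is a Banach algebra and the operators $\epsilon^{r_1}$, $(t^{k+1}\partial_t)^{s_1}$, $\partial_z^{S}$, $t^s\partial_t^{\kappa_0}\partial_z^{\kappa_1}$ and multiplication by the coefficients $b^{(i)}_{s\kappa_0\kappa_1}$ are all compatible with taking asymptotic expansions in $\epsilon$ (the differential-algebra property recalled in Section~\ref{sec42}), I would apply the asymptotic-expansion map to the identity (\ref{e394}) satisfied by $X_i$: its two sides expand exactly to the two sides of (\ref{e692}) evaluated at $\hat X$, and as the analytic sides coincide so do their formal expansions, whence $\hat X$ solves (\ref{e692}). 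The hypothesis $\hat b_{s\kappa_0\kappa_1 0}(\epsilon)\equiv0$ enters here: it forces each $\hat b_{s\kappa_0\kappa_1}(z,\epsilon)$ to vanish at $z=0$, which together with $\kappa_1<S$ keeps the $z$-recursion determining the coefficients $H_\beta$ triangular and thus makes the formal problem (\ref{e692}) well posed, in the spirit of the recursion (\ref{e145}) of Proposition~\ref{prop150}. I expect this verification to be the main obstacle, since it is the only place where the analytic and formal pictures must be reconciled through the algebraic structure rather than by quoting a previous result.

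Finally, for the multisummability assertion I would invoke the last part of \textbf{(RS)}. Under the additional geometric hypothesis, no index $h\in I_{\delta_1,i_0,\delta_2}$ carries a singular direction between consecutive $\gamma$'s, so $I_{\delta_1,i_0,\delta_2}\subset I_2$ and the differences $\Delta_h^1$ vanish there; moreover the inclusion $\mathcal{E}_{i_0}\subset S_{\pi/(r_1/s_1)}\subset\bigcup_{h\in I_{\delta_1,i_0,\delta_2}}\mathcal{E}_h$ is exactly condition (\ref{e1022}) with $\alpha=r_1/s_1$. Since $\omega(\mathbb{M})<1/(rk)=s_1/r_1$, Definition~\ref{defi446} applies with $\kappa=\alpha=r_1/s_1$, and the conclusion of \textbf{(RS)} gives that $\hat X$ is $(\mathbb{M},r_1/s_1)$-summable on $\mathcal{E}_{i_0}$ with $(\mathbb{M},r_1/s_1)$-sum equal to $X_{i_0}$, as claimed.
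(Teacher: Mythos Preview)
Your proposal is correct and follows essentially the same approach as the paper's proof: define $G_i(\epsilon)=X_i(\cdot,\cdot,\epsilon)$, use Theorems~\ref{teo445} and~\ref{teo526} to obtain the two kinds of cocycle estimates, apply Theorem (RS) for the decomposition and the multisummability, and then check that $\hat X$ solves~(\ref{e692}). The only stylistic difference lies in this last verification, where the paper explicitly differentiates~(\ref{e394}) $\ell$ times in $\epsilon$ and lets $\epsilon\to0$ via~(\ref{e1103}) to obtain a recursion for the $H_\ell$, while you invoke the differential-algebra compatibility of asymptotic expansions---these are two phrasings of the same argument.
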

\begin{proof}
Let $(X_{i}(t,z,\epsilon))_{0\le i\le \nu-1}$ be the finite family of functions constructed in Theorem~\ref{teosol}. For every $0\le i\le \nu-1$, $G_i(\epsilon):=(t,z)\mapsto X_{i}(t,z,\epsilon)$ turns out to be a holomorphic and bounded function from $\mathcal{E}_{i}$ to $\mathbb{E}$. Bearing in mind Theorem~\ref{teo526} (resp. Theorem~\ref{teo445}), the cocycle $\Delta_{i}(\epsilon)=G_{i+1}(\epsilon)-G_i(\epsilon)$ satisfies exponentially flat bounds of Gevrey order $r_1/s_1$ (resp. bounds of the form (\ref{e337})) so that Theorem (RS) guarantees the existence of $\hat{G}(\epsilon),\hat{G}^{1}(\epsilon),\hat{G}^{2}(\epsilon)\in\mathbb{E}[[\epsilon]]$,  $a(\epsilon)\in\mathbb{E}\{\epsilon\}$ and $G_i^1(\epsilon),G_i^2(\epsilon)\in\mathbb{E}(\mathcal{E}_i)$ such that one has the decompositions
\begin{align*}
\hat{G}_i(\epsilon)&=a(\epsilon)+\hat{G}^1(\epsilon)+\hat{G}^2(\epsilon),\\
G_i(\epsilon)&=a(\epsilon)+G_i^1(\epsilon)+G_i^2(\epsilon),
\end{align*}%
where $G^{1}_{i}(\epsilon)$ is a holomorphic function on $\mathcal{E}_{i}$ and admits $\hat{G}^{1}(\epsilon)$ as its Gevrey asymptotic expansion of order $s_1/r_1$ in $\mathcal{E}_{i}$, and where $G^{2}_{i}(\epsilon)$ is a holomorphic function on $\mathcal{E}_{i}$ and admits $\hat{G}^{2}(\epsilon)$ as its $\mathbb{M}-$asymptotic expansion in $\mathcal{E}_{i}$.

Under the additional assumptions in the statement, the Theorem (RS) claims that the formal series $\hat{G}(\epsilon)$ is
$(\mathbb{M},\frac{r_1}{s_1})$-summable on $\mathcal{E}_{i_0}$ and that its
$(\mathbb{M},\frac{r_1}{s_1})$-sum is given by $G_{i_0}(\epsilon)$.

We now show that $\hat{X}$ satisfies (\ref{e692}). For every $0\le i\le \nu-1$, the fact that $G_i^j(\epsilon)$ admits $\hat{G}_i^j(\epsilon)$ as its asymptotic expansion (Gevrey asymptotic expansion for $j=1$ and in the sense of Definition~\ref{defi314} for $j=2$) in $\mathcal{E}_{i}$ implies that
\begin{equation}\label{e1103}
\lim_{\epsilon\to0,\epsilon\in\mathcal{E}_{i}}\sup_{(t,z)\in(\mathcal{T}\cap \{|t|<h'\})\times D(0,R_0)}|\partial_{\epsilon}^{\ell}X_i(t,z,\epsilon)-H_{\ell}(t,z)|=0,
\end{equation}
for all $\ell\ge0$. We derive $\ell<r_1$ times at both sides of equation (\ref{e692}) and let $\epsilon\to0$. From (\ref{e1103}) we get a recursion formula for the coefficients in (\ref{e691}) given by
\begin{align*}
a\partial_{z}^S\left(\frac{H_{\ell}(t,z)}{\ell!}\right)&= \sum_{(s,\kappa_0,\kappa_1)\in\mathcal{S}}\sum_{m=1}^{\ell}\frac{\ell!}{m!(\ell-m)!} \frac{b_{s\kappa_0\kappa_1m}(z)}{m!}\frac{\partial_t^{\kappa_0}\partial_{z}^{\kappa_1}H_{\ell-m}(t,z)}{(\ell-m)!} \\
&-a(t^{k+1}\partial_t)^{s_1}\partial_z^S\left(\frac{H_{\ell-r_1}(t,z)}{(\ell-r_1)!}\right).
\end{align*}
Following the same steps one concludes that the coefficients in $\hat{G}(\epsilon)$ and the coefficients of the analytic solution, written as a power series in $\epsilon$, coincide. This yields $\hat{X}(t,z,\epsilon)$ is a formal solution of (\ref{e394}),(\ref{e395}).


\end{proof}

\end{document}